\DeclareMathOperator{\Lip}{Lip}
\DeclareMathOperator{\diam}{diam}
\begin{document}

\title{Nagata Dimension and Lipschitz Extensions Into Quasi-Banach Spaces}
\begin{abstract}
    Given two metric spaces $\mathcal N \subseteq \mathcal M$ in inclusion and $0<p\leq 1$, we wish to determine the smallest constant $\mathfrak{t}_p (\mathcal N, \mathcal M)$ such that any Lipschitz map $f: \mathcal N \to Z$ into any $p$-Banach space $Z$ can be extended to a Lipschitz map $f' : \mathcal M \to Z$ satisfying $\Lip f' \leq \mathfrak{t}_p (\mathcal N, \mathcal M)\cdot \Lip f$. In this article, we prove that if $\mathcal N$ has finite Nagata dimension at most $d$ with constant $\gamma$, then $\mathfrak{t}_p (\mathcal N, \mathcal M) \lesssim_p \gamma \cdot (d+1)^{1/p -1} \cdot \log (d+2)$ for all $0<p\leq 1$. We show that examples of spaces with finite Nagata dimension include doubling spaces, as well as minor-excluded metric graphs. We also establish that the constant $\mathfrak{t}_p (\mathcal N, \mathcal M)$ generally increases as $p$ approaches zero.
\end{abstract}
\author{Jan B\'ima}

\email{jan.bima@mff.cuni.cz}
\address{Charles University, Faculty of Mathematics and Physics, Department of Mathematical Analysis, Sokolovsk\'a 83, 186 75 Prague 8, Czech Republic}
\subjclass[2010]{54C20, 46A16 (Primary), 54F45, 05C63, 05C83 (Secondary)}
\keywords{Lipschitz extension problem, absolute extendability, Whitney cover, Nagata dimension, doubling space, graph minor, Lipschitz free space}
\thanks{I would like to express my gratitude to M. Cúth for his numerous valuable insights into my research efforts. I acknowledge the support of GAČR 23-04776S and of the Charles University project GA UK No. 138123. The work was completed while the author was an employee at MSD Czech Republic, Prague, Czech Republic.}
\maketitle

\section{Introduction}\label{sec:introduction}

Suppose that $\mathcal M$ and $\mathcal T$ are metric spaces and $\mathcal N$ is a non-empty subset of $\mathcal M$. A classical question in metric space theory, known as the \emph{Lipschitz extension problem}, asks what the smallest possible constant $C$ is, such that any Lipschitz map $f: \mathcal N \to \mathcal T$ can be extended to $\widetilde f : \mathcal M \to \mathcal T$, where $\Lip \widetilde f \leq C \Lip f$. Owing to its fundamental importance and widespread application in geometry and approximation theory, this problem has received significant attention. The literature on the subject is extensive, with notable contributions including those by \textcite{Kirszbraun1934,Johnson1986,Ball1992,Lee2004}, to name a few.

In this paper, our focus is on two specific kinds of Lipschitz extension problems, namely the \emph{trace} and \emph{absolute extendability problems}. For any two metric spaces $\mathcal N \subseteq \mathcal M$ in inclusion and a family of metric space $\mathcal F$, we define the \emph{trace} $\mathfrak{t}_{\mathcal F} (\mathcal M, \mathcal N)$ as the infimum over constants $C\in (0, \infty]$ where any Lipschitz map $f : \mathcal N \to \mathcal T$ for any $\mathcal T \in \mathcal F$, can be extended to $\widetilde f : \mathcal M \to \mathcal T$ such that $\Lip \widetilde f \leq C \Lip f$. Subsequently, the \emph{absolute extendability constant} $\mathfrak{ae}_{\mathcal F} (\mathcal N)$ is the supremum of traces $\mathfrak{ae}_{\mathcal F} (\mathcal M, \mathcal N)$ across all metric spaces $\mathcal M \supseteq \mathcal N$. That is, $\mathfrak{ae}_{\mathcal F} (\mathcal N) = \sup \{ \mathfrak{ae}_{\mathcal F} (\mathcal N, \mathcal M) : \mathcal N \subseteq \mathcal M\}$.

The family of all Banach spaces is most commonly taken as $\mathcal F$. In this setting, the classes of absolute extendable metric spaces have been identified and extensively investigated, along with the associated trace problems, by \textcite{Johnson1986,Matousek1990,Lee2004,Lang2005,Brudnyi2007,Naor2017,Basso2022}. More recent research has explored extensions of Lipschitz maps ranging into quasi-metric and quasi-Banach spaces, as detailed by \textcite{Basso2018} and \textcite{Albiac2021sums}, respectively. This paper also contributes to this ongoing line of research. Interestingly, establishing Lipschitz extendability results in the quasi-metric setting often necessitates innovative proof techniques. These novel approaches, in turn, provide a new perspective on the case where Banach spaces are considered.

Before presenting our results in greater detail, we recall the concept of quasi-Banach spaces and properly adopt the definitions of trace and absolute extendability. In what follows, the notation $A\lesssim B$ means that $A \leq C B$ for some universal constant $C\geq 0$.

To that end, we recall $(X, \lVert \cdot \rVert)$ is called a \emph{quasi-normed space} if $X$ is a vector space and $\lVert \cdot \rVert$ is a quasi-norm, that is,
\begin{enumerate*}[label=(\roman*), ref=\roman*]
    \item $\lVert x \rVert > 0$ for any $x\neq 0$,
    \item\label{it:quasi_norm_homogeneity} $\lVert \alpha x \rVert = |\alpha|\lVert x \rVert$ for any scalar $\alpha$ and $x\in X$,
    \item\label{it:quasi_norm} and $\lVert x+y\rVert \lesssim \lVert x\rVert + \lVert y\rVert$ for any $x,\,y\in X$.
\end{enumerate*}
We call $X$ a \emph{quasi-Banach} space if it is complete with respect to the linear metric topology induced by $\lVert\cdot\rVert$.

It turns out that every quasi-Banach space is isomorphic to a \emph{$p$-Banach space} for some $0<p\leq 1$, as shown in \cite[Theorem~1.2]{Kalton1984}. The converse is trivially true.

\begin{defn}
    Let $X$ be a vector space and $0<p\leq 1$. We say that a map $\lVert \cdot \rVert : X\to [0, \infty)$ is a \emph{$p$-norm} on $X$ if, in addition to \cref{it:quasi_norm_homogeneity,it:quasi_norm},
    \begin{enumerate}[start=3,label=(\roman*'), ref=\roman*']
        \item\label{it:p_norm} $\lVert x+y\rVert^p \leq \lVert x\rVert^p + \lVert y\rVert^p$ for any $x,\,y\in X$.
    \end{enumerate}
    
    We then call $(X, \lVert\cdot\rVert)$ a \emph{$p$-normed} space. If $X$ is complete with respect to the metric $d(x,y)=\lVert x-y\rVert^p$, where $x,\,y\in X$, we say $(X, \lVert\cdot\rVert)$ is a \emph{$p$-Banach space}.
\end{defn}

Developing extendability results for maps ranging into any general quasi-Banach space would be overly ambitious. Therefore, we adopt the following definition.

\begin{defn}
    For a metric space $\mathcal N$ and each $0<p\leq 1$, we define the \emph{$p$-trace $\mathfrak{t}_p (\mathcal N, \mathcal M)$ of $\mathcal N$ in $\mathcal M \supseteq \mathcal N$} to be the infimum over all $C\in (0, \infty]$ such that for any $p$-Banach space $Z$, any Lipschitz map $f : \mathcal N \to Z$ has a Lipschitz extension $\widetilde f : \mathcal M \to Z$ with $\Lip \widetilde f \leq C \Lip f$. 
    
    We define the \emph{absolute $p$-extandability constant} $\mathfrak{ae}_p (\mathcal N) = \sup \{\mathfrak{t}_p (\mathcal N, \mathcal M) : \mathcal M \supseteq \mathcal N \}$. If $\mathfrak{ae}_p (\mathcal N) < \infty$, we say that $\mathcal N$ is \emph{absolutely $p$-extendable}.
\end{defn}

To better motivate the notions we just introduced, particularly in relation to the $p$-Banach setting where $0<p\leq 1$, we can consider extensions from finite subsets. Note that it is easy see that finite metric spaces are absolutely $p$-extendable. However, determining the lower and upper estimates on the extendability constant $\mathfrak{ae}_1 (n) = \sup \{ \mathfrak{ae}_1(\mathcal N): |\mathcal N| \leq n \}$ is a significant open problem, see \textcite{Lee2004absolute,Naor2017}. In general, the problem becomes even more challenging for $0<p<1$. While it can easily be observed that $\mathfrak{ae}_p (\mathcal N) = 1$ for any two-point metric space $\mathcal N$ and $0<p<1$, the absolute extendability constant typically increases as $p$ approaches zero. In particular, we have the following result, which is intriguing when compared to \cite[Theorem 1.1]{Basso2018}, asserting that $\mathfrak{t}_p (\mathcal N, \mathcal M) \leq m+1$ whenever $|\mathcal M \setminus \mathcal N| \leq m$, for any $m\in \mathbb N$ and all $0<p\leq 1$.

\begin{introthm}[{cf.~\Cref{thm:t_p_counterexample}}]\label{introthm:t_p_counterexample}
    Let $\mathcal N = \{0, 1, 2\} \subseteq (\mathbb R, |\cdot|)$ and $\mathcal M = \mathcal N \cup \{ 3/2 \}$. Then $\mathfrak{t}_1 (\mathcal N, \mathcal M) = 1$ but $\mathfrak{t}_p (\mathcal N, \mathcal M) > 1$ for any $0<p<1$. Moreover, we have $\mathfrak{t}_p (\mathcal N, \mathcal M) \to 2$ as $p\to 0$.
\end{introthm}

Having motivated the notion of absolute extandability, let us review the following series of results on non-trivial families of absolutely 1-extendable spaces, which will have a significant role in the sequel.

\begin{thm}[{\textcite{Johnson1986}}]
    If $\mathcal N$ is a subset of an $n$-dimensional normed vector space $Y$, then $\mathfrak{t}_1 (\mathcal N, Y) \lesssim n$.
\end{thm}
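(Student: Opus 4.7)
I would prove the statement via a direct Whitney-type extension, which is the standard route for this classical result. Because $Y$ is $n$-dimensional, it is doubling with constant controlled in terms of $n$ (using volume/John-ellipsoid reasoning), and this is the only geometric input I need besides the norm itself.

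\textbf{Plan.} The idea is to cover the open set $Y\setminus \mathcal N$ by balls adapted to the distance to $\mathcal N$, build a Lipschitz partition of unity subordinate to this cover, attach to each ball a ``base point'' in $\mathcal N$, and define $\widetilde f$ on $Y\setminus\mathcal N$ as the convex combination of $f$-values at these base points. First, I would fix a scale parameter $c\in(0,1)$ and extract by a greedy/maximal procedure a set $\{x_i\}_{i\in I}$ with $x_i\in Y\setminus \mathcal N$ and $\{B(x_i, c\, d(x_i,\mathcal N))\}$ pairwise disjoint, so that the enlarged balls $B_i:=B(x_i, 3c\, d(x_i,\mathcal N))$ cover $Y\setminus\mathcal N$. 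Using the doubling property of the $n$-dimensional normed space $Y$, the pointwise multiplicity of $\{B_i\}$ is bounded by a constant $M(n)$. Next I would build Lipschitz functions $\phi_i\colon Y\to[0,1]$ supported in $B_i$ with $\sum_i \phi_i \equiv 1$ on $Y\setminus\mathcal N$ and $\Lip\phi_i \lesssim 1/d(x_i,\mathcal N)$, by truncating and normalizing the natural ``tent'' functions $\max\{0,\,3c\, d(x_i,\mathcal N)-\|\cdot-x_i\|\}$. For each $i$ I select $p_i\in \mathcal N$ with $\|x_i-p_i\|\leq 2\, d(x_i,\mathcal N)$, and set
\[
\widetilde f(y) \;=\; \begin{cases} f(y) & y\in \mathcal N,\\ \sum_{i\in I}\phi_i(y)\, f(p_i) & y\in Y\setminus\mathcal N.\end{cases}
\]

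\textbf{Lipschitz estimate.} The verification splits into the usual three cases. For $y,y'\in\mathcal N$, the bound is immediate from $\Lip f$. For $y\in\mathcal N$, $y'\notin\mathcal N$, using $\sum_i\phi_i(y')=1$ and $\|y-p_i\|\lesssim\|y-y'\|$ whenever $\phi_i(y')\neq 0$, one gets a bound linear in $\Lip f$ with an $n$-free constant. The critical case is $y,y'\in Y\setminus\mathcal N$ with $\|y-y'\|\leq \tfrac{1}{2}\min\{d(y,\mathcal N),d(y',\mathcal N)\}$; here I fix any index $j$ with $\phi_j(y)\neq 0$ and write
\[
\widetilde f(y)-\widetilde f(y') \;=\; \sum_{i} (\phi_i(y)-\phi_i(y'))\bigl(f(p_i)-f(p_j)\bigr),
\]
and estimate using $|\phi_i(y)-\phi_i(y')|\lesssim \|y-y'\|/d(x_i,\mathcal N)$ and $\|p_i-p_j\|\lesssim d(x_i,\mathcal N)$, summing only over the (bounded-multiplicity) indices $i$ active at $y$ or $y'$. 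This produces a bound of the shape $M(n)\cdot \Lip f\cdot \|y-y'\|$. The ``far'' case reduces to these via a chaining argument along an affine segment from $y$ to $y'$.

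\textbf{Main obstacle.} The difficulty is precisely in turning the multiplicity constant $M(n)$ into $O(n)$, rather than the naive $2^{O(n)}$ one gets from pure doubling. I would handle this by passing to an Auerbach basis in $Y$: up to bi-Lipschitz distortion $\leq n$ one may replace the $Y$-norm by the $\ell_\infty^n$-norm, in which a dyadic cubical Whitney decomposition exists with multiplicity $\leq 2^n$ of touching cubes — but, crucially, only $O(n)$ cubes contribute a non-zero increment $\phi_i(y)-\phi_i(y')$ for any fixed pair $(y,y')$, because moving along a coordinate direction only changes the $n$ coordinate-wise cube assignments. This layered/coordinate-wise accounting is what converts the exponential multiplicity into the linear factor $n$ asserted by the theorem; absorbing the Auerbach distortion $\lesssim n$ and the Whitney constants yields the final bound $\mathfrak t_1(\mathcal N,Y)\lesssim n$.
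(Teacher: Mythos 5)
Your Whitney skeleton (greedy balls, partition of unity, base points $p_i\in\mathcal N$, convex combinations of $f(p_i)$) is the right frame, and you correctly isolate where the theorem actually lives: converting the overlap constant, which doubling only bounds by $2^{O(n)}$, into a linear factor $n$. But the mechanism you propose for that conversion does not work. First, the claim that in a dyadic cubical Whitney decomposition ``only $O(n)$ cubes contribute a non-zero increment $\phi_i(y)-\phi_i(y')$'' is unsupported and false as stated: near a corner of the grid, order $2^n$ enlarged cubes are active at both $y$ and $y'$, and changing one coordinate changes the value of essentially every associated bump (and, after normalizing by the sum, of every active $\phi_i$), so the number of non-zero increments is again exponential; what must be controlled is the weighted sum $\sum_i|\phi_i(y)-\phi_i(y')|$, and the first-power tent construction you describe only yields the multiplicity $M(n)=2^{O(n)}$ as a bound. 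A coordinate-wise telescoping would need a tensor-product partition of unity, which is incompatible with the Whitney requirement that cube sizes vary with $d(\cdot,\mathcal N)$. Second, even granting an $O(n)$ bound inside $\ell_\infty^n$, transporting the problem by an Auerbach isomorphism costs the distortion $\lVert T\rVert\,\lVert T^{-1}\rVert\leq n$ multiplicatively, so your accounting would give $O(n^2)$, not $O(n)$; the last sentence of your proposal silently absorbs an extra factor $n$.

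The missing idea is the exponent-optimization device of Johnson--Lindenstrauss--Schechtman, which is exactly what \Cref{thm:lipschitz_extension_p} abstracts: take $\phi_i'=\rho^m(\cdot,\mathcal M\setminus K_i)$ rather than first powers, normalize, and use the concavity estimate of \Cref{lemma:ineq_means} to get $\sum_i|\phi_i(x)-\phi_i(y)|\lesssim m\,o^{1/m}\,\rho(x,y)/\min\{\rho(x,\mathcal N),\rho(y,\mathcal N)\}$; choosing $m=\log_2(2o)$ turns the overlap $o$ into a factor $\log o$. Since an $n$-dimensional normed space satisfies $\log\lambda_Y\lesssim n$, hence has Nagata dimension $d$ with $\log(d+2)\lesssim n$ by \Cref{lemma:doubling_nagata}, this yields $\mathfrak t_1(\mathcal N,Y)\lesssim n$ working directly in the original norm, with no auxiliary distortion. (The stochastic-decomposition route of Lee--Naor achieves the same logarithmic gain by randomizing the cover; one of these ideas, and not bounded multiplicity alone, is what produces the linear dependence on $n$.)
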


Observe that, as a straightforward corollary, we have $\mathfrak{ae} (n) \lesssim n$ for all $n\in\mathbb N$. Indeed, it suffices to observe that for any $n$-point metric space $\mathcal N$ and $\mathcal M \supseteq \mathcal N$, there exists a non-expansive map of $\mathcal M$ into $\ell_\infty^n$ which, moreover, is an isometry on $\mathcal N$.

The proof presented in \cite{Johnson1986} relies on a specific Whitney-type cover of the ambient space $Y \supseteq \mathcal N$, with its existence being facilitated by the presence of the Lebesgue measure on $Y$. Abstracting this approach, the only necessary condition for the proof to pass through is that the space $\mathcal M$ fulfills the \emph{doubling} property.

\begin{defn}
    We say that a~metric space $(\mathcal N, \rho)$ is \emph{doubling} if there exists $\lambda_{\mathcal N} \in \mathbb N$, called the \emph{doubling constant}, such that any closed ball in $\mathcal N$ of radius $2r>0$ can be covered by $\lambda_{\mathcal N}$-many closed balls of radius $r$.
\end{defn}

Examples of doubling metric spaces are provided by subsets of Carnot groups, see \cite{LeDonne2017}, and, in particular, subsets of finite-dimensional spaces. To that end, we recall that whenever $\mathcal N$ is a~subset of an~$n$-dimensional normed space, we have $\log \lambda_{\mathcal N} \lesssim n$, as shown in \cite[Theorem 3]{Rogers1963}.

As an application of general extension results obtained using the method of \emph{stochastic metric decomposition}, \textcite{Lee2004} achieved the following.

\begin{thm}[{\textcite{Lee2004}}]\label{thm:ae_doubling}
    If $\mathcal N$ is a doubling metric space, then $\mathfrak{ae}_1 (\mathcal N) \lesssim \log \lambda_{\mathcal N}$.
\end{thm}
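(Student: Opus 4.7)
The plan is to follow the stochastic metric decomposition method of \textcite{Lee2004}. The first ingredient is the construction, for every scale $r>0$, of a random partition $\Pi_r$ of $\mathcal{N}$ such that every part has diameter at most $r$ and every point $x\in \mathcal{N}$ is \emph{$\beta r$-padded} with probability at least $1/2$; that is, $\Pr[B(x,\beta r)\subseteq \Pi_r(x)]\geq 1/2$, for some $\beta$ with $\beta^{-1}\lesssim \log\lambda_{\mathcal{N}}$. I would produce such a partition following the Calinescu--Karloff--Rabani scheme: enumerate a maximal $(r/4)$-net of $\mathcal{N}$ in a uniformly random order, assign to each center an independent random radius drawn from an appropriate density on $[r/4,r/2]$, and put each remaining point into the part of the first center whose random ball captures it. The doubling hypothesis controls the number of centers near any fixed $x$ and converts the tail of the radius distribution into the padding bound $\beta^{-1}\lesssim \log \lambda_{\mathcal{N}}$.

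Second, given $\mathcal{M}\supseteq \mathcal{N}$, a Banach space $Z$, and a Lipschitz $f:\mathcal{N}\to Z$, I would extend $f$ by a Whitney-type partition of unity built from the random partitions $\Pi_{r_k}$ at dyadic scales $r_k = 2^k$, pulled back to $\mathcal{M}\setminus \mathcal{N}$ via a nearest-point projection $\pi:\mathcal{M}\setminus\mathcal{N}\to \mathcal{N}$. For each part $C$ of $\Pi_{r_k}$, use a bump whose support is the $\beta r_k$-padded interior of $C$, select a representative point $y_C\in \mathcal{N}\cap C$, and define $\widetilde f(x)$ as the corresponding convex combination of values $f(y_C)$, weighted by the bumps evaluated at $\pi(x)$ on the shell $\{x:2^{k}<d(x,\mathcal{N})\leq 2^{k+1}\}$. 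A probabilistic realization (or derandomization) selects partitions for which the padding holds simultaneously at the points we need.

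Third, to bound $\Lip \widetilde f$ I would split into the standard cases: both arguments in $\mathcal{N}$ is trivial; one argument in $\mathcal{N}$ reduces to showing continuity of $\widetilde f$ at the boundary; and the main case is two points $x, x'\in \mathcal{M}\setminus \mathcal{N}$ at comparable distance from $\mathcal{N}$. In this main case, the local Lipschitz constant of the partition-of-unity factor at a point in the shell at scale $r_k$ is controlled by $(\beta r_k)^{-1}$, while the diameters of the parts give an extra factor of $r_k\cdot \Lip f$; the overlap multiplicity at a single point is absorbed into the doubling factor. Summing telescopically across adjacent scales yields $\Lip \widetilde f\lesssim \beta^{-1}\Lip f\lesssim \log \lambda_{\mathcal{N}}\cdot \Lip f$. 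The main obstacle is the quantitative bookkeeping of this last step: ensuring that the $\log \lambda_{\mathcal{N}}$ dependence from the padding lemma is not inflated by overlap counts or by the passage between consecutive scales. The cleanest route is to isolate, once and for all, an abstract statement of the form ``stochastic decomposition with padding $\beta$ at all scales implies $\mathfrak{t}_1(\mathcal{N},\mathcal{M})\lesssim \beta^{-1}$,'' which then combined with the padding construction above gives the theorem.
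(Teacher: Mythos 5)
Your outline is essentially the original stochastic metric decomposition proof of \textcite{Lee2004}, and as such it is a viable route to the statement; but it is not the route this paper takes. Here the theorem is recovered (as the case $p=1$ of \Cref{cor:ae_p_doubling}) by a purely deterministic argument: \Cref{lemma:doubling_nagata} shows a doubling space has Nagata dimension at most $\lambda_{\mathcal N}^3-1$ with constant $2$; \Cref{prop:whitney_cover_nagata} then produces a Whitney cover of $\mathcal M\setminus\mathcal N$ whose overlap constant is $\lesssim\lambda_{\mathcal N}^3$; and \Cref{thm:lipschitz_extension_p} converts such a cover into an extension whose Lipschitz constant is controlled by the \emph{logarithm} of the overlap constant, via the exponent-optimization trick of \textcite{Johnson1986} (raising the distance functions in the partition of unity to the power $m$ and choosing $m=\log_2(2o)$). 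The trade-off is instructive: your stochastic route is more flexible in that padded decomposability is a weaker hypothesis than doubling, whereas the deterministic Whitney-cover route gives a single linear (simultaneous) extension operator with range in $\conv\operatorname{Rng} f$ and, crucially for this paper, survives the passage to $p$-Banach targets for $p<1$, which the averaging argument does not.

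One step of your sketch deserves a warning. You propose that ``a probabilistic realization (or derandomization) selects partitions for which the padding holds simultaneously at the points we need.'' In general no single realization is padded at all the points you must handle (padding holds at each point only with probability $\geq 1/2$, and uncountably many points are involved); the Lee--Naor construction instead \emph{averages} over the random partitions, defining the extension by a Bochner integral over a distributional partition of unity. That is exactly where the Banach (convexity) structure of the target is used, and it is the reason, noted in \Cref{sec:introduction}, that this method does not readily adapt to the quasi-Banach setting. If you replace your ``selection'' step by this expectation, your argument becomes the standard one and the bookkeeping you worry about in the last step goes through as in \textcite{Lee2004}, yielding $\Lip\widetilde f\lesssim\beta^{-1}\Lip f\lesssim\log\lambda_{\mathcal N}\cdot\Lip f$.
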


Recently, a generalization of the result was addressed for maps ranging into $p$-Banach spaces by \textcite{Albiac2021sums}.

\begin{thm}[{\textcite{Albiac2021sums}}]\label{thm:ae_p_doubling}
    If $\mathcal N$ is a doubling metric space, then $\mathfrak{ae}_p (\mathcal N) \lesssim (15\lambda_{\mathcal N}^4)^{1/p}$ for all $0<p\leq 1$.
\end{thm}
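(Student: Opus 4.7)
The plan is to adapt the classical Whitney-cover argument of \textcite{Johnson1986}, which in its sharpest form proves $\mathfrak{t}_1 (\mathcal N, \mathcal M) \lesssim \log \lambda_{\mathcal N}$ for every $\mathcal N\subseteq \mathcal M$ with $\mathcal N$ doubling (cf.\ \Cref{thm:ae_doubling}), to the $p$-Banach regime. Fix an extension space $\mathcal M\supseteq\mathcal N$, which we may and do assume to be complete. For each $x\in U:=\mathcal M\setminus\mathcal N$ select an anchor $\pi(x)\in\mathcal N$ with $d(x,\pi(x))\le 2\, d(x,\mathcal N)$, and construct a locally finite Whitney-type cover $\{B(x_i,r_i)\}_{i\in I}$ of $U$ by open balls with $r_i\asymp d(x_i,\mathcal N)$. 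Using the doubling property of $\mathcal N$, one may arrange that for every $x\in U$ the set $I(x):=\{i\in I : x\in 2B(x_i,r_i)\}$ has cardinality at most $\lambda_{\mathcal N}^{c}$ for a small absolute constant $c$; this multiplicity will ultimately drive the exponent of $\lambda_{\mathcal N}$ in the final bound.

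With this cover in hand, produce a Lipschitz partition of unity $(\psi_i)_{i\in I}$ subordinate to $\{2B(x_i,r_i)\}_{i\in I}$ with $\Lip\psi_i\lesssim 1/r_i$ and set
\begin{equation*}
    \widetilde f(x)=\begin{cases} f(x), & x\in\mathcal N,\\ \sum_{i\in I}\psi_i(x)\, f(\pi(x_i)), & x\in U.\end{cases}
\end{equation*}
To bound $\Lip\widetilde f$ I would split into the three pairings $\mathcal N$--$\mathcal N$, $\mathcal N$--$U$, and $U$--$U$; only the last two require work. For $x,x'\in U$ with, say, $d(x,x')\le \tfrac{1}{2}d(x,\mathcal N)$, use $\sum_i(\psi_i(x)-\psi_i(x'))=0$ to write
\begin{equation*}
    \widetilde f(x)-\widetilde f(x')=\sum_{i\in I(x)\cup I(x')}(\psi_i(x)-\psi_i(x'))\bigl(f(\pi(x_i))-f(\pi(x))\bigr),
\end{equation*}
and invoke the $p$-triangle inequality \cref{it:p_norm} to get
\begin{equation*}
    \lVert\widetilde f(x)-\widetilde f(x')\rVert^p \le \sum_{i\in I(x)\cup I(x')}|\psi_i(x)-\psi_i(x')|^p\,\lVert f(\pi(x_i))-f(\pi(x))\rVert^p.
\end{equation*}
The Whitney geometry ensures $|\psi_i(x)-\psi_i(x')|\lesssim d(x,x')/r_i$ and $d(\pi(x_i),\pi(x))\lesssim r_i$ for each index in the sum, and combining this with $|I(x)\cup I(x')|\le 2\lambda_{\mathcal N}^{c}$ yields the estimate $\Lip\widetilde f\lesssim\lambda_{\mathcal N}^{c/p}\,\Lip f$. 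The $\mathcal N$--$U$ regime is handled by an analogous but simpler $p$-sum involving $\psi_i(x)$ in place of the difference $\psi_i(x)-\psi_i(x')$.

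The main obstacle will be the bookkeeping in the displayed $p$-sum. In the Banach setting one sums $|\psi_i(x)-\psi_i(x')|$ linearly and obtains $\sum_i|\psi_i(x)-\psi_i(x')|\lesssim 1$, so the covering multiplicity enters only logarithmically after a dyadic grouping of scales -- this is how $\log\lambda_{\mathcal N}$ arises in \Cref{thm:ae_doubling}. Here, the $p$-power sum forfeits that cancellation entirely and pays $(\text{multiplicity})^{1/p}$ instead, which is precisely the source of the $(15\lambda_{\mathcal N}^4)^{1/p}$ term and a genuine, unavoidable loss as $p\to 0$ (consistent with \Cref{introthm:t_p_counterexample}). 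Pinning down the specific exponent $4$ and the constant $15$ will require a careful calibration of the Whitney scale, the dilation factor, and the doubling iterations used to bound $|I(x)|$, but no conceptual novelty beyond the scheme sketched above is needed to complete the argument.
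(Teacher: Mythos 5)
Your overall scheme (dyadic Whitney cover of $\mathcal M\setminus\mathcal N$, anchors in $\mathcal N$, a subordinate Lipschitz partition of unity, a three-case estimate, and the $p$-triangle inequality paying a factor $(\text{multiplicity})^{1/p}$) is indeed the route taken in the cited source and is parallel to the machinery of \Cref{thm:lipschitz_extension_p}. However, there is a genuine gap at the single most important quantitative step: your claim that "using the doubling property of $\mathcal N$ one may arrange" that the cover $\{B(x_i,r_i)\}$ by balls \emph{centered at points of $U=\mathcal M\setminus\mathcal N$} has multiplicity $|I(x)|\le\lambda_{\mathcal N}^{c}$. Since $\mathfrak{ae}_p(\mathcal N)$ quantifies over \emph{all} ambient spaces $\mathcal M\supseteq\mathcal N$, the overlap of such a ball cover is a packing number of $\mathcal M$ near $U$ (how many $r$-separated centers can sit in a ball of radius $\sim r$ of $\mathcal M$), and this is governed by the geometry of $\mathcal M$, which is completely unconstrained. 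Concretely, attach to a doubling $\mathcal N$ a cloud of $K$ points all at distance $1$ from $\mathcal N$ and pairwise at distance $2/5$; any Whitney net at scale $\sim 1/4$ must retain all $K$ of them as centers, and every point of the cloud then lies in $K$ of the dilated balls, with $K$ arbitrary and independent of $\lambda_{\mathcal N}$. So the multiplicity bound you need cannot be obtained from balls of $\mathcal M$ centered in $U$, and without it the final estimate $\lambda_{\mathcal N}^{c/p}$ collapses.

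The fix, and this is exactly what the actual proof does (and what \Cref{prop:whitney_cover_nagata} does in the more general Nagata setting), is to index the covering sets by pieces of $\mathcal N$ rather than by points of $U$: for each dyadic annulus $A_j=\{x: 2^j\le\rho(x,\mathcal N)<2^{j+1}\}$ take a family of pieces of $\mathcal N$ of diameter $\sim 2^j$ (balls around an $\epsilon 2^j$-net of $\mathcal N$, or Nagata cover elements at scale $\sim 2^j$), and let the covering set attached to a piece $C$ consist of the points of $A_j$ whose distance to $\mathcal N$ is (essentially) realized in $C$, slightly fattened to make it open. Then a point of $A_j$ can only meet covering sets whose pieces intersect a subset of $\mathcal N$ of diameter $\lesssim 2^j$, and the number of such pieces is bounded using the doubling constant of $\mathcal N$ alone (this is where \cref{it:nagata_3}, or the net-counting in $\mathcal N$, enters), uniformly in $\mathcal M$. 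With that cover in hand, your partition-of-unity computation goes through essentially as written; two smaller points to keep in mind are that $\Lip\psi_i\lesssim 1/r_i$ requires the denominator $\sum_j\phi_j$ to be bounded below (property \cref{it:whitney_2} of the cover) and picks up an overlap factor, affecting only the exponent you must calibrate, and that your aside that the loss $(\text{multiplicity})^{1/p}$ is "unavoidable" is too strong: \Cref{thm:lipschitz_extension_p} shows that raising the bump functions to the power $m=\log_2(2o)$ improves this to $o^{1/p-1}\log o$, though that refinement is not needed for the stated bound $(15\lambda_{\mathcal N}^4)^{1/p}$.
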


The proof of \Cref{thm:ae_p_doubling} uses a distinct method in comparison to that of \Cref{thm:ae_doubling}. Specifically, the technique of stochastic decompositions as applied in \cite{Lee2004} results in an extension map being defined by a Bochner integral, specifically over a distributional partition of unity. This approach does not readily adapt to the quasi-metric setting. Meanwhile, the estimate from \Cref{thm:ae_p_doubling} is evidently suboptimal for $p=1$ in light of \Cref{thm:ae_doubling}, raising the question of whether the approach can be improved. Revisiting the ideas used in the original result due to \textcite{Johnson1986}, we show that this indeed is the case.

\begin{notation}
    In what follows, the notation $A\lesssim_p B$ means that $A \leq C(p)\cdot B$ for some constant $C(p)\geq 0$ dependent only on $p$.
\end{notation}

\begin{introthm}[{\emph{for doubling spaces}; cf.~\Cref{cor:ae_p_doubling}}]\label{introthm:ae_p_doubling}
    If $\mathcal N$ has doubling constant $\lambda_N > 1$, then $\mathfrak{ae}_p (\mathcal N) \lesssim_p \lambda_{\mathcal N}^{3/p-3} \cdot \log \lambda_{\mathcal N}$ for all $0<p\leq 1$.
\end{introthm}

Importantly, we show that the assumptions on $\mathcal N$, necessary for the existence of a specific Whitney-type cover, can actually be further generalized. More precisely, it suffices to assume that $\mathcal N$ has a finite \emph{Nagata dimension}. Notably, the resulting quantitative estimate is a new result even in the Banach setting where $p=1$.\footnote{We remark that this result has been recently independently discovered by \textcite{Basso2023lipschitz}, in the narrower context of the Banach setting with $p=1$. See also \Cref{rem:basso}.} To this end, we recall an earlier estimate showing that if $\mathcal N$ has Nagata dimension at most $d$ with constant $\gamma$, then $\mathfrak{ae}_1 (\mathcal N) \lesssim \gamma d^3$, see \cite[Corollary 5.2]{Naor2011}.

\begin{defn}
    Let $(\mathcal N, \rho)$ be a metric space. Given $\gamma \geq 1$ and $d\in\mathbb N_0$, we say that $\mathcal N$ has \emph{Nagata dimension at most $d$ with constant $\gamma$}, if for every $s>0$, there exists a family $\mathcal C$ of non-empty subsets in $\mathcal N$ with the following properties:
    \begin{enumerate}[label=(\alph*), ref=\alph*]
        \item\label{it:nagata_1} $\mathcal C$ covers $\mathcal N$, i.e., $\bigcup_{C\in\mathcal C} C = \mathcal N$,
        \item\label{it:nagata_2} for every $C\in \mathcal C$, $\diam C \leq \gamma s$,
        \item\label{it:nagata_3} for every $A\subseteq \mathcal N$ with $\diam A \leq s$, we have $| \{ C\in\mathcal C : C \cap A \neq \emptyset \} | \leq d +1 $.
    \end{enumerate}
\end{defn}

It is known that if a metric space $\mathcal N$ has a doubling constant $\lambda_{\mathcal N}$, it has Nagata dimension at most $\lambda_{\mathcal N}^3-1$ with constant 2 (see \Cref{lemma:doubling_nagata}). The Nagata dimension can in fact be bounded by the logarithm of the doubling constant (see \cite{LeDonne2015}, where an alternative definition of the doubling dimension, referred to as the \emph{Assouad dimension}, is considered). In our applications, however, this gain would be compensated by increased values of the constant $\gamma$.

\addtocounter{introthm}{-1}
\begin{introthm}[{\emph{for spaces with finite Nagata dimension}; cf.~\Cref{thm:ae_p_nagata}}]
    If $\mathcal N$ has Nagata dimension at most $d$ with constant $\gamma$, then $\mathfrak{ae}_p (\mathcal N) \lesssim_p \gamma \cdot (d+1)^{1/p -1} \cdot \log (d+2)$ for all $0<p\leq 1$.
\end{introthm}

An important class of spaces with finite Nagata dimension is constituted by \emph{metric graphs}, which are one-dimensional simplicial complexes induced by weighted graphs (see~\Cref{def:metric_graph}). It can be shown that if a weighted graph $G=(V, E)$ excludes the complete graph on $m$ vertices, $K_m$, as a minor, then the Nagata dimension, together with constant $\gamma$ of the metric graph $\Sigma (G)$, can be bounded in terms of $m$. We note that for minor-excluded unweighted graphs, the finiteness of the Nagata dimension was established in \cite[Theorem 2.2]{Ostrovskii2015}. In \Cref{prop:nagata_minor_excluded}, we generalize this result to the class of all metric graphs.

We remark that a trace theorem for subsets of metric trees, which are metric graphs induced by weighted trees, was established in \textcite{Matousek1990}. More recently, this result has been generalized to an absolute extendability result for subsets of metric graphs, induced by minor-excluded weighted graphs, in \textcite[Theorem 5.1]{Lee2004}. By aplying \Cref{prop:nagata_minor_excluded} in conjunction with the general extension result for spaces with finite Nagata dimensions, as established in \Cref{introthm:ae_p_doubling}, we present an alternative proof for this generalized result and adapt it to the $p$-Banach setting.

\begin{introthm}[{cf.~\Cref{thm:ae_p_graphs}}]\label{introthm:ae_p_minor_excluded}
    If $\Sigma(G)$ is a metric graph induced by a countable and connected weighted graph $G$ which excludes the complete graph $K_m$ as a minor, then $\mathfrak{ae}_p (S) \lesssim_p m^2 \cdot 9^{m(1/p - 1)}$ for any subset $S\subseteq \Sigma(G)$ and any $0<p\leq 1$.
\end{introthm}

\Cref{introthm:ae_p_doubling} has an important application to the theory of \emph{Lipschitz free $p$-spaces}. We note that these spaces were first considered by \textcite{Albiac2009}, and their systematic study was initiated by \textcite{Albiac2020} and has continued through \cite{Albiac2021sums,Albiac2022,Bima2023}. For a given metric space $\mathcal M$ and $0<p\leq 1$, there exists a unique, up to an isomoprhism, $p$-Banach space $\mathcal F_p (\mathcal M)$, called the \emph{Lipschitz free space over $\mathcal M$}, such that $\mathcal M$ embeds isometrically into $\mathcal F_p(\mathcal M)$ via a map $\delta:\mathcal M \to \mathcal F_p(\mathcal M)$, and for every $p$-Banach space $Y$ and a Lipschitz map $f:\mathcal M\to Y$ which vanishes at the origin, $f$ extends uniquely to a linear operator $T_f : \mathcal F_p(\mathcal M)\to Y$ such that $\Lip f = \lVert T_f\rVert$.

It was an open question whether, for metric spaces $\mathcal N \subseteq \mathcal M$ and $0<p\leq 1$, the linearization $T_i : \mathcal F_p (\mathcal N) \to \mathcal F_p (\mathcal M)$ of the canonical inclusion $i: \mathcal N \to \mathcal M$ is an isomorphism (see \cite[Question 6.2]{Albiac2020}). As an essential part, \Cref{introthm:ae_p_doubling} (see also \Cref{cor:ae_p_trees} or apply \Cref{introthm:ae_p_minor_excluded} with $m=3$) was used in \cite[Theorem 3.21]{Cuth2023} to show that this is indeed the case. Specifically, the value $\lVert T_i^{-1} \rVert$ can be bounded by the absolute $p$-extendability constant of a metric tree, that is, a graph which excludes $K_3$ as minor.

This paper is structured as follows. In \Cref{sec:whitney_covers}, we lay out a general extension result which only relies on the existence, and quality, of a particular Whitney-type cover on the space $\mathcal M \setminus \mathcal N$. The presence of such covers for spaces with finite Nagata dimension is established in Section \Cref{sec:nagata_covers}. \Cref{sec:lip_free_spaces} introduces the counterexample from \Cref{introthm:t_p_counterexample}. The proof will rely on the properties of Lipschitz free $p$-spaces, and in this context, we will discuss the general connection of the Lipschitz extension problem to Lipschitz free $p$-spaces. In \Cref{sec:open_problems}, we formulate two open questions related to the dependence of the Lipschitz extension constant on $p$.

\section{Covers of the Whitney Type and Lipschitz Extensions}\label{sec:whitney_covers}

In order to extend Lipschitz maps defined on $\mathcal N$ to the ambient space $\mathcal M$, we consider a partition of unity $\{ \phi_i \}_{i\in\mathcal I}$ induced by a cover $\{ K_i \}_{i\in\mathcal I}$ on $\mathcal M \setminus \mathcal N$. For this purpose, we adopt the general concept of a Whitney-type cover; thus, we cover the space $\mathcal M \setminus \mathcal N$ in a way that the diameters of the covering sets are proportional to their distance from the set $\mathcal N$.

\begin{defn}
    Let $\mathcal N$ be a closed subset of a metric space $(\mathcal M, \rho)$. We say that a family $\{K_i\}_{i\in \mathcal I}$ of open sets in $\mathcal M\setminus \mathcal N$ is a~\emph{Whitney cover of $\mathcal M\setminus \mathcal N$ with parameters $(o, s, d, a)$}, where $o\in\mathbb N$ and $s,\, d,\, a>0$, provided
    \begin{enumerate}[label=(\roman*), ref=\roman*]
        \item\label{it:whitney_1} for any $x\in \mathcal M\setminus \mathcal N$, it holds $|\{ i \in \mathcal I:x\in K_i\}|\leq o$,
        \item\label{it:whitney_2} for any $x\in \mathcal M\setminus \mathcal N$, there exists $i\in\mathcal I$ such that $\rho(x, \mathcal M\setminus K_i)\geq s\cdot\rho(x, \mathcal N)$,
        \item\label{it:whitney_3} it holds that $\diam K_i \leq d\cdot\rho(K_i, \mathcal N)$ for any $i\in\mathcal I$,
        \item\label{it:whitney_4} for any $i\in I$ and $x,\,y\in K_i$, we have $\rho(x, \mathcal N) / \rho(y, \mathcal N) < a$.
    \end{enumerate}
\end{defn}

Observe that by \cref{it:whitney_2}, $\{ K_i \}_{i\in\mathcal I}$ is, in particular, a cover of $\mathcal M\setminus \mathcal N$.

Note that we do not require the covering sets to be disjoint. However, the \emph{overlapping constant} proves to be a significant factor for the quantitative estimates on the Lipschitz constant of resulting extensions. This is so because, for a given point $y\in\mathcal M \setminus\mathcal N$, the extension of a map $f$ will be defined as $\sum_{i\in\mathcal I} \phi_i(y) f(x_i)$, for some predetermined $x_i \in\mathcal N$. Note that in the setting where $p<1$, there arises an additional quasimetric factor (see \cref{it:p_norm}), which grows as $n^{1/p-1}$ for $n$ summands. For brevity, we will introduce the following notation.

\begin{notation}
    We set $C(p, n)= n^{1/p -1}$ where $0<p\leq 1$ and $n\in\mathbb N$. Note that for any $p$-normed space $(X, \lVert\cdot\rVert_p)$ and $x_i\in X$, where $i\in \{1,\ldots, n\}$, we have $\lVert \sum_{i=1}^n x_i \rVert_p \leq C(p, n)\sum_{i=1}^n \lVert x_i \rVert_p$.
\end{notation}
    
This reasoning also explains why we introduce another type of cover that is distinct from the one recently considered by \textcite{Lee2004}. Mainly, the authors examine \emph{distributions} over covers and derive 'well-behaved' partitions of unity where desirable properties are typically attained "on average." While it is often possible to pass to locally finite partitions, we find it more convenient for our purposes to construct the covers in a way that lets us more precisely track the overlapping constant.

Assuming the existence of Whitney cover on $\mathcal M \setminus \mathcal N$, we can now construct a Lipschitz extension to the space $\mathcal M$. Let us note that the adapted definition of Whitney-type cover, yet without the \cref{it:whitney_4}, appeared in \cite[Proposition 5.2]{Albiac2021sums}, and a related result on Lipschitz extension for maps ranging into $p$-Banach spaces was given in \cite[Theorem 5.1]{Albiac2021sums}. We remark that in the referenced paper, the extension problem was examined within the context of Lipschitz free $p$-spaces. Here we achieve to refine the extension result further by employing an optimization trick originating from the work of \textcite{Johnson1986}.

\begin{thm}\label{thm:lipschitz_extension_p}
    Let $\mathcal N \subset (\mathcal M, \rho)$ be non-empty and such that $\mathcal M\setminus \mathcal N$ admits a~Whitney cover with parameters $(o, s, d, a)$, where $o\in\mathbb N$, $o>1$, and $s,\, d>0$. 
    Then for any $0<p\leq 1$, the $p$-trace $\mathfrak{t}_p (\mathcal N, \mathcal M)$ of $\mathcal N$ in $\mathcal M$ is at most $D(p, s, d, a)\cdot C(p, o) \log_2 (2o)$, where $D(p, s, d, a)$ is a universal constant, quantified in \eqref{eq:const_D}.
\end{thm}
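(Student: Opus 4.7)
The plan is to construct $\widetilde f$ via a Whitney-type partition of unity formula and to estimate $\Lip \widetilde f$ case-by-case, relying on a dyadic regrouping of active indices in the hardest case to secure the $\log_2(2o)$ savings.

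I first assemble a partition of unity subordinate to $\{K_i\}_{i\in\mathcal I}$ by setting $\psi_i(y) := \rho(y, \mathcal M \setminus K_i)$ and $\phi_i := \psi_i / \Psi$ with $\Psi := \sum_j \psi_j$, for $y \in \mathcal M \setminus \mathcal N$. From (\ref{it:whitney_2}) I obtain $\Psi(y) \geq s\,\rho(y, \mathcal N)$; the estimate $\psi_j(y) \leq \rho(y, \mathcal N)$ (valid for $y \in K_j$ via $\mathcal N \subseteq \mathcal M \setminus K_j$) combined with (\ref{it:whitney_1}) gives $\Psi(y) \leq o\,\rho(y, \mathcal N)$. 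Each $\phi_i$ is supported in $K_i$ and, via the quotient rule, Lipschitz with constant of order $o/(s\,\rho(y, \mathcal N))$ on a neighbourhood of $K_i$ whose extent is controlled by (\ref{it:whitney_4}). For every $i$ I select an anchor $x_i \in \mathcal N$ satisfying $\rho(x_i, K_i) \leq 2\,\rho(K_i, \mathcal N)$, and I set $\widetilde f = f$ on $\mathcal N$ and $\widetilde f(y) := \sum_i \phi_i(y) f(x_i)$ on $\mathcal M \setminus \mathcal N$.

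Estimating $\Lip \widetilde f$ amounts to handling three cases. The case $y_1, y_2 \in \mathcal N$ is trivial. For $y_1 \in \mathcal M \setminus \mathcal N$ and $y_2 \in \mathcal N$, I write $\widetilde f(y_1) - f(y_2) = \sum_i \phi_i(y_1)(f(x_i) - f(y_2))$; at most $o$ terms are non-zero, and the triangle inequality combined with the anchor choice, property (\ref{it:whitney_3}), and $\rho(K_i, \mathcal N) \leq \rho(y_1, \mathcal N) \leq \rho(y_1, y_2)$ gives $\rho(x_i, y_2) \lesssim_d \rho(y_1, y_2)$. The $p$-norm subadditivity and the power-mean inequality $\sum_i \phi_i(y_1)^p \leq o^{1-p}$ then yield the desired factor $C(p, o) = o^{1/p - 1}$.

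The remaining case, $y_1, y_2 \in \mathcal M \setminus \mathcal N$, is the crux. After reducing to the regime where $\rho(y_1, y_2)$ is small compared with $\rho(y_1, \mathcal N)$ (otherwise I route through the nearest point of $\mathcal N$ and appeal to the preceding case twice), I exploit $\sum_i (\phi_i(y_1) - \phi_i(y_2)) = 0$ to express
\[ \widetilde f(y_1) - \widetilde f(y_2) = \sum_{i \in J} (\phi_i(y_1) - \phi_i(y_2))(f(x_i) - f(x_{i_0})), \]
where $J$ collects indices active at $y_1$ or $y_2$ (so $|J| \leq 2o$) and $x_{i_0}$ is the anchor of some $K_{i_0}$ containing $y_1$; by (\ref{it:whitney_3})--(\ref{it:whitney_4}) and the anchor choice, every $x_i$ with $i \in J$ then lies in a ball of radius $O(\rho(y_1, \mathcal N))$ about $x_{i_0}$. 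The key step is a Johnson-style optimisation: I sort $J$ by distance from $x_{i_0}$ and split into $O(\log_2(2o))$ dyadic shells, chaining successive anchors inside each shell to trade the ambient displacement $f(x_i) - f(x_{i_0})$ for smaller incremental differences, and re-applying $p$-norm subadditivity on each shell (which contributes only $C(p, o) = o^{1/p-1}$). Summation across the $O(\log_2(2o))$ shells then produces the full estimate. The main obstacle is precisely this bookkeeping: the naïve direct bound costs a factor of $o$ where the Johnson optimisation costs only $\log_2(2o)$, and the delicate point is to reconcile the sign-cancellation inherent in $\sum_i (\phi_i(y_1) - \phi_i(y_2)) = 0$ with the sign-insensitive $p$-norm subadditivity.
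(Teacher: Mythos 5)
Your construction coincides with the paper's up through the choice of anchors and the case split, but in the crucial third case it diverges, and the divergence is where the proof breaks. You take the \emph{unpowered} partition of unity $\phi_i = \psi_i/\Psi$ with $\psi_i(y) = \rho(y, \mathcal M\setminus K_i)$. With this choice the quantity that controls the estimate, namely $\sum_{i\in J}|\phi_i(y_1)-\phi_i(y_2)|$, is genuinely of order $o\cdot\rho(y_1,y_2)/(s\,\rho(y_1,\mathcal N))$ (a quotient-rule computation gives $\sum_i|\psi_i(y_1)-\psi_i(y_2)|/\Psi(y_1)\leq 2o\rho(y_1,y_2)/(s\rho(y_1,\mathcal N))$ and a matching bound for the second term), and no rearrangement of the summands can lower this total mass. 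The paper's $\log_2(2o)$ arises from a different mechanism: the family $\phi'_i = \rho^m(\cdot,\mathcal M\setminus K_i)$ for a free parameter $m\geq 1$, whose normalization yields a more concentrated partition of unity. The interpolation argument together with \Cref{lemma:ineq_means} then gives $\sum_i|\phi_i(y_1)-\phi_i(y_2)|\lesssim m(2o)^{1/m}/(s\,\rho(y_1,\mathcal N))\cdot\rho(y_1,y_2)$, and the Johnson-style optimization is the choice $m=\log_2(2o)$, which balances $m\cdot(2o)^{1/m}\leq 2\log_2(2o)$. Without the power $m$ there is nothing to optimize.

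The dyadic-shell regrouping of anchors by $\rho(x_i,x_{i_0})$ does not compensate for this. Because all anchors $x_j$ with $j\in J$ lie in a single ball of radius $O(\rho(y_1,\mathcal N))$ about $x_{i_0}$, there is no a priori dyadic structure among the distances $\rho(x_j,x_{i_0})$: in the generic case all $2o$ of them are within a factor of two of each other, producing a single shell of $2o$ terms with equal-magnitude displacements. Applying $p$-norm subadditivity there costs $C(p,2o)$ once and then multiplies by $\sum_i|\phi_i(y_1)-\phi_i(y_2)|\sim o$, so the factor $o$ survives, not $\log_2(2o)$. The chaining of anchors inside a shell also does not help: since $\Lip f=1$, the direct bound $\lVert f(x_j)-f(x_{i_0})\rVert\leq\rho(x_j,x_{i_0})$ already holds, and replacing a single increment by a telescoping chain through intermediate anchors only introduces more terms. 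The fix is to adopt the parametrized partition of unity $\phi'_i=\rho^m(\cdot,\mathcal M\setminus K_i)$ and carry the parameter $m$ through the Case 3 estimate, optimizing at the end; the remainder of your outline (anchor choice, Case 1 and the routing-through-$\mathcal N$ reduction) is sound and matches the paper.
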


Let us first note the following elementary inequality.

\begin{lemma}\label{lemma:ineq_means}
    For any $m\geq 1$, $n\in\mathbb N$, and $a_i\geq 0$, where $i\in\{1,\ldots, n\}$ and at least one $a_i$ is non-zero, it holds that \[\sum_{i=1}^n a_i^{m-1}/\sum_{i=1}^n a_i^m\leq n^{1/m}/\left(\sum_{i=1}^n a_i^m\right)^{1/m} \text. \]
\end{lemma}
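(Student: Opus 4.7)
The plan is to read the inequality, after clearing the denominator $\sum_i a_i^m$, as
\[ \sum_{i=1}^n a_i^{m-1} \leq n^{1/m} \left(\sum_{i=1}^n a_i^m\right)^{(m-1)/m}\text, \]
which is precisely a Hölder-type estimate. I would apply Hölder's inequality to the pair of sequences $(a_i^{m-1})_{i=1}^n$ and $(1)_{i=1}^n$ with conjugate exponents $\tfrac{m}{m-1}$ and $m$ (the former is admissible because the hypothesis $m\geq 1$ ensures $m/(m-1)\in [1,\infty]$, with the case $m=1$ being essentially trivial and treated separately).

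Concretely, Hölder gives
\[ \sum_{i=1}^n a_i^{m-1}\cdot 1 \leq \left(\sum_{i=1}^n a_i^{(m-1)\cdot m/(m-1)}\right)^{(m-1)/m}\cdot \left(\sum_{i=1}^n 1\right)^{1/m} = \left(\sum_{i=1}^n a_i^m\right)^{(m-1)/m}\cdot n^{1/m}\text. \]
Dividing both sides by $\sum_{i=1}^n a_i^m$, which is positive by the nondegeneracy assumption on the $a_i$, produces exactly the claimed inequality. For the boundary case $m=1$ one checks directly that both sides equal $n/\sum_i a_i$, so no separate argument is really needed. Thus the only ``step'' is a single application of Hölder, and there is no real obstacle — the content of the lemma is just to record this rewriting in a form directly usable in the proof of \Cref{thm:lipschitz_extension_p}.
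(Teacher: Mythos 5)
Your proof is correct, and it establishes exactly the same rearranged inequality $\sum_{i=1}^n a_i^{m-1} \leq n^{1/m}\left(\sum_{i=1}^n a_i^m\right)^{(m-1)/m}$ that the paper proves; the paper derives it from concavity of $x\mapsto x^{(m-1)/m}$ (the power-mean/Jensen form), which is equivalent to your application of H\"older with the constant sequence. So this is essentially the paper's argument in a different standard guise, and your handling of the boundary case $m=1$ and of the positivity of $\sum_i a_i^m$ is fine.
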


\begin{proof}
    Note that $x\mapsto x^{(m-1)/m}$ is concave on $\mathbb R^+$. Consequently, we get $\sum_{i=1}^n a^{m-1}_i/n \leq \left(\sum_{i=1}^n a_i^m/n\right)^{(m-1)/m}$. Rearranging the terms, the claim is established.
\end{proof}

\begin{proof}[Proof of~\Cref{thm:lipschitz_extension_p}.]
    Let $\{ K_i\}_{i\in\mathcal I}$ be a Whitney cover for $\mathcal M\setminus \mathcal N$ with parameters $(o, s, d, a)$. We can assume that each set $K_i$, where $i\in\mathcal I$, is non-empty. Also, we pick $0<p\leq 1$ and consider a parameter $m\geq 1$, to be optimized later.
    
    We construct a partition of unity $\{ \phi_i \}_{i\in\mathcal I}$ on $\mathcal M \setminus \mathcal N$ as follows. For every $i\in \mathcal I$, we define a~map $\phi'_i : \mathcal M\setminus \mathcal N \to \mathbb R$ as $x \mapsto \rho^m(x, \mathcal M\setminus K_i)$, where $x\in \mathcal M\setminus \mathcal N$. It follows from \cref{it:whitney_1,it:whitney_2} that for any $x\in \mathcal M\setminus \mathcal N$, the set $\{i\in\mathcal I : \phi'_i(x)>0\}$ is finite non-empty. Consequently, we set $\phi_i : \mathcal M\setminus \mathcal N \to [0, 1]$ by letting $\phi_i=\phi'_i / \sum_{j\in\mathcal I} \phi'_j$, for each $i\in\mathcal I$. 
    
    Note that for each $i\in \mathcal I$, we have $\rho(\mathcal N, K_i) > 0$. Indeed, each $K_i \subseteq \mathcal M \setminus \mathcal N$ is non-empty, so $\rho(\mathcal N, K_i) > 0$ is trivially true if $K_i$ is a singleton. Otherwise, if $K_i$ contains at least two points, then $0<\diam K_i \leq d\cdot\rho(K_i, \mathcal N)$ as per \cref{it:whitney_3}.
    
    Consequently, we can pick $x_i\in \mathcal N$ such that $\rho(x_i, K_i)\leq 2\rho(\mathcal N, K_i)$. By \cref{it:whitney_3}, it follows that for any $i\in\mathcal I$ and $x\in K_i$,
    \begin{equation}\label{eq:dist_K}
        \rho(x, x_i)\leq \rho(x_i, K_i) + \operatorname{diam} K_i \leq (2+d)\cdot \rho(\mathcal N, K_i)\text.
    \end{equation}

    Consider a Lipschitz map $f: \mathcal N \to Z$ into some $p$-Banach space $Z$. We may, without loss of generality, assume that $\Lip f = 1$. 
    
    We define an extension $f' : \mathcal M \to Z$ as $f'(x) = f(x)$ if and only if $x\in \mathcal N$ and $f(x) = \sum_{i\in\mathcal I} \phi_i (x)f(x_i)$ otherwise. In the rest of the proof, we will estimate the Lipschitz constant of $f'$. Note that since the Lipschitz constant of $f'$, when restricted to $\mathcal N$, is the same as that of $f$, we can consider the following three cases.

    \begin{casesp}
        \item\label{cs:est_case1} Assume that $x\in \mathcal N$ and $y\in \mathcal M\setminus \mathcal N$.
        
        Note that for all $i\in\mathcal I$ with $\phi_i(y)>0$, we have $\rho(y, x_i)\leq (2+d)\cdot\rho(\mathcal N, K_i) \leq (2+d)\cdot\rho(x, y)$ by \eqref{eq:dist_K}. Consequently, $\rho(x, x_i) \leq \rho(x, y)+\rho(y, x_i) \leq (3+d)\rho(x, y)$, and 
        \begin{equation*}
            \begin{split}
                \left\lVert f'(x)-f'(y)\right\rVert_p&=\left\lVert\sum_{i\in\mathcal I} (f'(x)-f'(x_i))\phi_i(y)\right\rVert_p \\
                &\leq C(p, o) \max_{\substack{i\in\mathcal I\\ \phi_i(x) > 0}}\rho(x, x_i)\\
                &\leq C(p, o) (3+d)\cdot\rho(x, y) \text.
            \end{split}
        \end{equation*}
        
        \item\label{cs:est_case2} Let $x,\,y\in \mathcal M\setminus \mathcal N$ be such that $\phi_i(x)\phi_i(y)=0$ for all $i\in\mathcal I$.
        
        Recall that by \cref{it:whitney_2}, there exists $i\in\mathcal I$ with $\rho(x, \mathcal M\setminus K_i) \geq s\cdot\rho(x, \mathcal N)$, so that, in particular, $\phi_i(x)>0$. It follows that $\phi_i(y)=0$, i.e., $y \in \mathcal M\setminus K_i$, and $\rho(x, y) \geq \rho(x, \mathcal M\setminus K_i)\geq s\cdot\rho(x, \mathcal N)$.

        Pick $x'\in \mathcal N$ such that $\rho(x', x) \leq 2\rho(x, \mathcal N)$. By the previous paragraph, $\rho(x', x)\leq 2/s \cdot \rho(x, y)$ and $\rho(x', y) \leq \rho(x', x) +\rho(x, y) \leq (2/s +1)\cdot\rho(x, y)$. It follows from \Cref{cs:est_case1} that
        \begin{equation*}
            \begin{split}
                \left\lVert f'(x)-f'(y)\right\rVert_p &\leq \left(\left\lVert f'(x)-f'(x')\right\rVert^p_p + \left\lVert f'(x')-f'(y)\right\rVert^p_p\right)^{1/p} \\
                &\leq C(p, o) (3+d)\left(\rho^p(x, x') + \rho^p(x', y)\right)^{1/p}\\
                &\leq C(p, o) (3+d)\left(2^{1+p}/s^p +1\right)^{1/p}\cdot\rho(x, y)\text,
            \end{split}
        \end{equation*}
        where, for the third inequality, we used the fact that $(2/s +1)^p \leq 2^p/s^p +1$ by subadditivity of $x\mapsto x^p$.

        \item Finally, let $x,\, y\in \mathcal M\setminus \mathcal N$ and $i\in\mathcal I$ be such that $\{x, y\} \subseteq K_i$.
        
        Note that whenever $x\in K_j$ for some $j\in \mathcal I$, then according to \eqref{eq:dist_K}, we have
        \begin{equation}\label{eq:ineq_ij}
            \begin{split}
                \rho(x_i, x_j)&\leq \rho(x_i, x) +\rho(x, x_j)\\
                &\leq (2+d)(\rho(\mathcal N, K_i)+\rho(\mathcal N, K_j))\\
                &\leq 2(2+d)\cdot\rho(x, \mathcal N) \text,
            \end{split}
        \end{equation}
        and similarly for $y$. 
        
        We may rewrite $f'(x)-f'(y)=\sum_{j\in\mathcal I} (f(x_j)-f(x_i))(\phi_j(x)-\phi_j(y))$. In light of \eqref{eq:ineq_ij}, we will now estimate the sum $\sum_{j\in\mathcal I} |\phi_j(x)-\phi_j(y)|$.
    
        To that end, let $\mathcal I' = \{ j\in\mathcal I : \phi_j(x) > 0 \text{ or } \phi_j(y) >0\}$, where $|\mathcal I'| \leq 2o$ by \cref{it:whitney_1}. Consequently, we enumerate the set $\mathcal I'$ as $\mathcal I' = \{i_1, \ldots, i_n\}$ for some $n \leq 2o$. Also, we define $a=(a_k)_{k=1}^n, \, b=(b_k)_{k=1}^n\in\mathbb R^n$ by letting $a_k=\rho(x, \mathcal M\setminus K_{i_k})$, $b_k=\rho(y, \mathcal M\setminus K_{i_k})$ for each $k\in\{1, \ldots, n\}$.
    
        For every $k\in \{1, \ldots, n\}$, we define $d_k: [0, 1] \to \mathbb R$ as \[ \lambda \mapsto \frac{(\lambda a_k + (1-\lambda)b_k)^m}{\sum_{l=1}^n (\lambda a_l + (1-\lambda)b_l)^m}, \quad \lambda \in [0, 1] \text.\] 
        Note that $d_k(1)=\phi_{i_k}(x)$ and $d_k(0)=\phi_{i_k}(y)$, for each $k\in \{1, \ldots, n\}$.
        
        Pick $k\in\{1,\ldots,n\}$. A short computation shows that for any $\lambda\in (0, 1)$,
        \begin{equation*}
            \begin{split}
                d'_k (\lambda) &= m\frac{(\lambda a_k + (1-\lambda)b_k)^{m-1}}{\sum_{l=1}^n (\lambda a_l + (1-\lambda)b_l)^m}(a_k-b_k) \\
                &\phantom{=}- m(\lambda a_k + (1-\lambda)b_k)^m \sum_{l=1}^n \frac{(\lambda a_l + (1-\lambda)b_l)^{m-1}}{(\sum_{l=1}^n (\lambda a_l + (1-\lambda)b_l)^m)^2}(a_l-b_l) \text.
            \end{split}
        \end{equation*}
        
        Taking the absolute value and summing across $k$, we obtain 
        \begin{equation}\label{eq:dist_derivative}
            \sum_{k=1}^n |d'_k| (\lambda) \leq 2m \frac{\sum_{l=1}^n (\lambda a_l + (1-\lambda)b_l)^{m-1}}{\sum_{l=1}^n (\lambda a_l + (1-\lambda)b_l)^m}\cdot\rho(x, y), \quad \lambda\in (0, 1)\text,
        \end{equation}
        where we used the fact that $|a_l-b_l|=|\rho(x, \mathcal M\setminus K_{i_l})-\rho(y, \mathcal M\setminus K_{i_l})|\leq \rho(x, y)$ for any $l\in \{1,\ldots,n\}$.
        
        By \Cref{lemma:ineq_means} with $n\leq 2o$, we deduce for every $\lambda\in (0, 1)$,
        \begin{equation}\label{eq:dist_der_mean}
            \sum_{k=1}^n |d'_k| (\lambda) \leq 2m(2o)^{1/m}\left(\sum_{k=1}^n (\lambda a_k + (1-\lambda)b_k)^m\right)^{-1/m}\cdot \rho(x, y) \text.
        \end{equation}
        
        Also, recall that by \cref{it:whitney_2}, there exist $k',\,k''\in\{1,\ldots, n\}$ with $\rho(x, \mathcal M\setminus K_{i_{k'}}) \geq s\cdot\rho(x, \mathcal N)$ and $\rho(y, \mathcal M\setminus K_{i_{k''}}) \geq s\cdot\rho(y, \mathcal N)$. Consequently, we get for any $\lambda \in (0, 1)$,
        \begin{equation*}
            \begin{split}
                \sum_{k=1}^n (\lambda a_k + (1-\lambda)b_k)^m &\geq \sum_{k\in\{k', k''\}} (\lambda a_k + (1-\lambda)b_k)^m\\
                &\geq (\min \{a_{i_{k'}}, b_{i_{k''}}\}/2)^m \\
                &\geq (s/2)^m(\min \{\rho(x, \mathcal N), \rho(y, \mathcal N)\})^m \text.
            \end{split}
        \end{equation*}

        Hence, continuing~\eqref{eq:dist_der_mean}, we obtain
        \begin{equation}\label{eq:estimate_sum}
            \begin{split}
                \sum_{j\in\mathcal I} |\phi_j(x)-\phi_j(y)| &= \sum_{k=1}^n |d_k(1)-d_k(0)|\\
                &\leq \sum_{k=1}^n\int_0^1 |d'_k| (\lambda)\,d\lambda\\
                &\leq 4/s\cdot m(2o)^{1/m}\\
                &\quad\cdot \left(\min \{\rho(x, \mathcal N), \rho(y, \mathcal N)\}\right)^{-1} \cdot \rho(x, y) \text.
            \end{split}
        \end{equation}
        Consequently,
        \begin{equation*}
            \begin{split}
                \lVert f'(x)-f'(y) \rVert_p &= \left\lVert\sum_{j\in\mathcal I} (f(x_j)-f(x_i))(\phi_j(x)-\phi_j(y))\right\rVert_p\\
                &\leq C(p, 2o)\max_{j\in \mathcal I'} \lVert f(x_j)-f(x_i) \rVert_p \\
                &\quad\cdot\sum_{j\in\mathcal I'} |\phi_j(x)-\phi_j(y)|\\
                &\leq \rho(x,y) \cdot C(p, 2o) (8/s)(2+d)m(2o)^{1/m} \\
                &\quad\cdot \frac{\max \{\rho(x, \mathcal N), \rho(y, \mathcal N)\}}{\min \{\rho(x, \mathcal N), \rho(y, \mathcal N)\}} \text,
            \end{split}
        \end{equation*}
        where the third inequality follows from \eqref{eq:ineq_ij} and \eqref{eq:estimate_sum}.

        We note that $\max \{\rho(x, \mathcal N), \rho(y, \mathcal N)\}/ \min \{\rho(x, \mathcal N), \rho(y, \mathcal N)\}<a$ by \cref{it:whitney_4}. Taking $m=\log_2 (2o)$, we get that \[ m(2o)^{1/m}=\log_2 (2o) \cdot 2^{\log_2 (2o)/\log_2 (2o)}\leq 2\log_2 (2o) \text. \]
    \end{casesp}
    
    Altogether, it follows that $\Lip f' \leq D \cdot C(p, o) \log_2 (2o)$, where $D$ depends only on $p$, $s$, $d$, and $a$. Quantitatively, we may set 
    \begin{equation}\label{eq:const_D}
        D(p, s, d, a)=\max \left\{32/s\cdot C(p,2)(d+2)a, (3+d)\left(2^{p+1}/s^p +1\right)^{1/p}\right\} \text.
    \end{equation}

    The proof is complete.
\end{proof}

We remark that the proof actually shows a stronger claim. Specifically, it establishes the existence of \emph{simultaneous Lipschitz extension} from $\mathcal N$ to $\mathcal M$ (see \cite{Brudnyi2005}). That is, there is a bounded linear extension operator $\Lip(\mathcal N, Z) \to \Lip(\mathcal M, Z)$, wherein the spaces of Lipschitz maps are endowed with the Lipschitz semi-norm. Also note, by the method of proof, that $\operatorname{Rng} f' \subseteq \operatorname{conv} \operatorname{Rng} f$.

\section{Covers for Metrics with Finite Nagata Dimension}\label{sec:nagata_covers}

We recall that the concept of Nagata dimension is closely related to that of the asymptotic dimension (consider \textcite{Gromov1993,Bell2008}). The Nagata dimension was first considered by \textcite{Nagata1958}, and its contemporary formulation is due to \textcite{Assouad1982}. As we will show, doubling spaces and metric graphs provide examples of spaces with a finite Nagata dimension. Intriguingly, for a metric space $(X, \rho)$ with a Nagata dimension not exceeding $d$, its corresponding metric snowflake $(X, \rho^p)$ (for sufficiently small exponents $0<p<1$), can be embedded into a product of $d+1$ metric trees (see \cite[Theorem 1.3]{Lang2005}). Furthermore, we recall that ultrametric spaces have Nagata dimension zero with constant 1 (also see \cite{Basso2023}).

The aim of this section is to establish a general theorem affirming the existence of Whitney covers for spaces $\mathcal N$ with finite Nagata dimension. Subsequently, we examine particular distinguished classes of such spaces, along with estimates on their dimension.

The essence of the proof showing the existence of Whitney covers involves partitioning $\mathcal M \setminus \mathcal N$ into a collection of expanding metric annuli within $\mathcal M \setminus \mathcal N$. These annuli increase in size directly proportional to their distance from $\mathcal N$, and the corresponding covering sets of these annuli are then induced by Nagata covers of $\mathcal N$, associated with various $s>0$. In this context, \cref{it:nagata_3} controls the overlapping constant of the cover. The details follow.

\begin{prop}\label{prop:whitney_cover_nagata}
    Let $\mathcal N$ be a closed subset of a metric space $(\mathcal M, \rho)$. If $\mathcal N$ has Nagata dimension at most $d$ with constant $\lambda$, then there exists a Whitney cover of $\mathcal M \setminus \mathcal N$ with parameters $(3(d+1), \epsilon/2, 4(1+\gamma)(\epsilon+2), 5)$, for each $0<\epsilon<1/2$.
\end{prop}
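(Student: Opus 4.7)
The plan is to combine a geometric annular decomposition of $\mathcal M \setminus \mathcal N$ by the distance-to-$\mathcal N$ function with the Nagata cover hypothesis applied at each scale. Fix $\epsilon \in (0, 1/2)$ and for $k \in \mathbb Z$ put $r_k = (1 + \epsilon)^k$. Then the annuli $A_k = \{x \in \mathcal M \setminus \mathcal N : r_k \leq \rho(x, \mathcal N) < r_{k+1}\}$ partition $\mathcal M \setminus \mathcal N$ — closedness of $\mathcal N$ ensures $\rho(x, \mathcal N) > 0$ on the complement. At each scale I invoke the Nagata hypothesis at a scale $s_k$ of order $r_k$ (concretely of order $\epsilon \, r_k$) to obtain a cover $\mathcal C_k$ of $\mathcal N$ whose members satisfy $\diam C \leq \gamma s_k$, and such that any subset of $\mathcal N$ of diameter at most $s_k$ meets at most $d + 1$ elements of $\mathcal C_k$.

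Next, I thicken each annulus radially to an open shell $\widetilde A_k$ that contains $A_k$ and extends slightly into $A_{k-1}$ and $A_{k+1}$, and for every $C \in \mathcal C_k$ I define the Whitney piece $K_{k, C} = \{x \in \widetilde A_k : \rho(x, C) < t_k\}$ for a horizontal threshold $t_k$ linear in $r_k$. The radial thickening defining $\widetilde A_k$ is calibrated so that consecutive shells overlap at most threefold — this produces the factor $3$ in the overlap parameter $3(d+1)$ — while $t_k$ is chosen large enough that any $x \in A_k$ lies in $K_{k, C}$ whenever $C \in \mathcal C_k$ contains a near-point projection $y_x$ of $x$. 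The index set is then $\mathcal I = \bigsqcup_k \{k\} \times \mathcal C_k$.

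The four defining properties of a Whitney cover then reduce to routine but careful bookkeeping. For \cref{it:whitney_1}, any $x \in \mathcal M \setminus \mathcal N$ sits in at most three consecutive shells; within a given shell $\widetilde A_k$, the pieces $K_{k, C}$ containing $x$ correspond to $C$'s that meet a fixed subset of $\mathcal N$ (the approximate near-points of $x$) of diameter at most $s_k$, hence number at most $d + 1$ by \cref{it:nagata_3}. For \cref{it:whitney_2}, choosing $C$ through a near-point projection of $x \in A_k$ I check that both the radial distance from $x$ to $\partial \widetilde A_k$ and the horizontal slack $t_k - \rho(x, C)$ are bounded below by $(\epsilon / 2) \rho(x, \mathcal N)$. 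For \cref{it:whitney_3}, the triangle inequality together with $\diam C \leq \gamma s_k$ and the threshold $t_k$ yields $\diam K_{k, C} \lesssim (1 + \gamma)(\epsilon + 2) r_k$, while $\rho(K_{k, C}, \mathcal N) \gtrsim r_k$; inserting constants produces the bound $4(1 + \gamma)(\epsilon + 2)$. Finally, \cref{it:whitney_4} is immediate from the definition of $\widetilde A_k$, since the ratio of distances to $\mathcal N$ within a single shell is at most a small polynomial in $1 + \epsilon$, which stays below $5$ under the assumption $\epsilon < 1/2$.

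The main technical point is the simultaneous calibration of the three free parameters — the annular ratio $1 + \epsilon$, the Nagata scale $s_k$, and the horizontal threshold $t_k$ — so that properties \cref{it:whitney_2} and \cref{it:whitney_3} hold with the precise constants stated in the proposition. In particular, $s_k$ must be chosen proportional to $r_k$ large enough that the near-point projections of any $x \in A_k$ fit inside a single subset of $\mathcal N$ of diameter $\leq s_k$ (so that the Nagata overlap bound applies), yet small enough that $\gamma s_k$ does not inflate $\diam K_{k, C}$ beyond the claimed bound in \cref{it:whitney_3}. Apart from this balancing act, the argument is a standard shell-and-cover construction in the spirit of the classical Whitney decomposition.
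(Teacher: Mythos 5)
Your construction is in the same spirit as the paper's: decompose $\mathcal M \setminus \mathcal N$ into annuli by the distance-to-$\mathcal N$ function, slightly thicken, and at each scale apply the Nagata hypothesis to obtain the lateral partition. The paper uses dyadic annuli (ratio $2$), whereas you use ratio $1+\epsilon$; the paper defines the pre-pieces $B^j_C = \{x \in A_j : \rho(x,\mathcal N) = \rho(x,C)\}$ in a Voronoi-like fashion and then thickens by $\epsilon\cdot 2^j$, whereas you directly take $K_{k,C} = \{x \in \widetilde A_k : \rho(x,C) < t_k\}$. These are reasonable variants.

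However, there is a genuine error in your calibration of the Nagata scale. You write that you invoke the Nagata hypothesis at scale $s_k$ ``of order $r_k$ (concretely of order $\epsilon\, r_k$)'' --- but these are very different quantities when $\epsilon$ is small, and the second is wrong. For your pieces to actually cover $A_k$, the horizontal threshold $t_k$ must be at least $r_{k+1} = (1+\epsilon)r_k$, since $\rho(x,C) \geq \rho(x,\mathcal N) \geq r_k$ for every $x \in A_k$ and every $C \subseteq \mathcal N$. Then the set of $C$'s with $x \in K_{k,C}$ are exactly those meeting $B(x,t_k)\cap\mathcal N$, which has diameter on the order of $2r_k$, not $\epsilon r_k$. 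Property \cref{it:nagata_3} only controls how many members of $\mathcal C_k$ a set of diameter $\leq s_k$ can meet; if $s_k \sim \epsilon r_k$, then $B(x,t_k)\cap\mathcal N$ is of diameter $\sim r_k / \epsilon \cdot s_k$, and a set that much larger than $s_k$ can a priori meet on the order of $(1/\epsilon)^d$ Nagata pieces. Your claimed overlap bound $3(d+1)$ then collapses. The Nagata scale must instead be taken $\gtrsim 2 t_k \sim 2 r_k$ (the paper takes $s_j = 2^{j+1}(\epsilon+2)$), and the factor $(\epsilon+2)$ in the claimed parameter $4(1+\gamma)(\epsilon+2)$ comes precisely from this forced lower bound on $s_k$. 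Once you correct the scale to $s_k$ comparable to $r_k$, the rest of your bookkeeping (three overlapping shells giving the factor $3$, the diameter bound, the aspect-ratio bound $\leq 5$) goes through along the lines you sketch, modulo the routine verification that the $K_{k,C}$ indeed cover $\mathcal M\setminus\mathcal N$.
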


\begin{proof}
    For each $j\in\mathbb Z$, we consider the metric annuli \[ A_j = \{ x\in \mathcal M\setminus \mathcal N: 2^j \leq \rho(x, \mathcal N) < 2^{j+1} \} \text. \] Also, for every $j\in \mathbb{Z}$ we fix a cover $\mathcal{C}^j$, associated with $s_j > 0$ (to be optimized later), that demonstrates that $\mathcal{N}$ has Nagata dimension at most $d$ with coefficient $\lambda$. We then set \[ B^j_C = \{ x\in A_j : \rho (x, \mathcal N) = \rho(x, C)\}\text, \quad \text{where $C\in\mathcal C^j$ and $j\in\mathbb Z$.} \]

    We assert that for any $j\in\mathbb Z$, if $s_j$ is such that $s_j \geq 2^{j+2}$, then $\{ B^j_C : C\in\mathcal C^j\}$ is a cover of $A_j$. For this, note that for any $x\in A_j$, the open ball $U(x, 2^{j+1})$ meets $\mathcal N$, and, moreover, $U(x, 2^{j+1}) \cap \mathcal N$ has diameter at most $2\cdot 2^{j+1}$. Consequently, $\{C\in\mathcal C^j : U(x, 2^{j+1}) \cap C \neq \emptyset \}$ is finite by \cref{it:nagata_3}. It follows that
    \begin{equation*}
        \begin{split}
            \rho (x, \mathcal N) &= \inf \{ \rho(x, y) : y\in \mathcal N\}\\
            &= \inf \{ \rho(x, y) : y\in U(x, 2^{j+1}) \cap \mathcal N\}\\
            &= \min \{ \rho(x, C) : C\in\{C\in\mathcal C^j : U(x, 2^{j+1}) \cap C \neq \emptyset \}\} \text,
        \end{split}
    \end{equation*}
    and the assertion is established.

    In the next step, we define a Whitney cover of $\mathcal M \setminus \mathcal N$ as follows. We pick $0<\epsilon<1/2$ and for each $j\in\mathbb Z$ and $C\in\mathcal C^j$, we set $K^j_C = \bigcup \{ U(x, \epsilon \cdot 2^j): x\in B^j_C \}$.

    Note that for any $x\in \mathcal M \setminus \mathcal N$ and $j\in\mathbb Z$, if $x\in K^j_C$ for some $C \in \mathcal C^j$, then $U(x, (\epsilon+2)\cdot 2^j) \cap C \neq \emptyset$. Consequently, provided that $s_j \geq 2^{j+1} (\epsilon+2)$, we get that $|\{ C \in\mathcal C^j: U(x, (\epsilon+2)\cdot 2^j) \cap C \neq \emptyset \}| \leq d+1$ by \cref{it:nagata_3}.

    Recall that $B^j_C \subseteq A_j$ for any $j\in\mathbb Z$ and $C\in\mathcal C^j$, so that $K^j_C \subseteq \{ x\in \mathcal M\setminus \mathcal N: 2^{j-1} \leq \rho(x, \mathcal N) < 5/2\cdot 2^j \}$. This establishes \cref{it:whitney_4} with $a=5$. Moreover, for every $j\in\mathbb Z$ we have 
    \begin{equation}\label{eq:K_intersection}
        \{ i\in\mathbb Z : A_j \cap K^i_C \neq\emptyset \text{ for some $C\in\mathcal C^i$}\} \subseteq \{ j-1, j, j+1\} \text.
    \end{equation}

    Together with the previous part, it follows that the overlapping constant $o$ from \cref{it:whitney_1} is at most $3(d+1)$, provided that $s_j \geq 2^{j+1} (\epsilon+2)$ for each $j\in\mathbb Z$.

    To adress \cref{it:whitney_2}, we assert that for any $x\in \mathcal M\setminus \mathcal N$, if $x\in B^j_C$ for some $j\in\mathbb Z$ and $C\in\mathcal C^j$, then $\rho(x, \mathcal M\setminus K^j_C) \geq \epsilon/2 \cdot \rho(x, \mathcal N)$. Indeed, note that $\rho(x, \mathcal M\setminus K^j_C) \geq \epsilon \cdot 2^j$ by the definition of $K^j_{\mathcal C}$, and $2^j \geq 1/2 \cdot \rho(x, \mathcal N)$ as $x\in A_j$.
    
    Finally, for \cref{it:whitney_3}, we estimate the diameter of $K^j_C$ for any given $j\in\mathbb Z$ and $C\in\mathcal C^j$. To that end, note that by the definition, there exists $y\in B^j_C$ such that $\rho(x, y) < \epsilon \cdot 2^j$, and, consequently, $\rho(x, C) \leq \rho(x, y) + \rho(y, C) < (\epsilon+2)\cdot 2^j$. It now follows that for any $x,\,x'\in K^j_C$, we have $\rho(x, x') \leq \rho(x, C) + \diam C + \rho(x', C) < 2(\epsilon+2)\cdot 2^j + \gamma s_j$. That is, $\diam K^j_C \leq (\epsilon+2)\cdot 2^{j+1} + \gamma s_j$.
    
    Choosing $s_j = 2^{j+1}(\epsilon+2)$ for each $j\in\mathbb Z$, we get $\diam K^j_C < 2^{j+1}(\epsilon+2)(1+ \gamma) \leq 4(1+\gamma)(\epsilon+2)\cdot \rho(\mathcal N, K^j_C)$. Here, we used that $2^{j-1} \leq \rho(\mathcal N, K^j_C)$ by the definition of $K^j_C$.
    
    It follows that the sets $K^j_C$, where $j\in\mathbb Z$ and $C\in\mathcal C^j$, form a Whitney cover of $\mathcal M\setminus \mathcal N$ with parameters $(3(d+1), \epsilon/2, 4(1+\gamma)(\epsilon+2), 5)$.
\end{proof}

Overall, in light of \Cref{thm:lipschitz_extension_p}, we have established the following.

\begin{thm}\label{thm:ae_p_nagata}
    If $\mathcal N$ has Nagata dimension at most $d$ with constant $\gamma$, then $\mathfrak{ae}_p (\mathcal N) \lesssim_p \gamma \cdot (d+1)^{1/p -1} \cdot \log (d+2)$ for all $0<p\leq 1$.
\end{thm}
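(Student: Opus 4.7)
The plan is to combine \Cref{prop:whitney_cover_nagata}, which converts the Nagata dimension hypothesis on $\mathcal N$ into the existence of a Whitney cover of $\mathcal M \setminus \mathcal N$, with \Cref{thm:lipschitz_extension_p}, which in turn converts such a Whitney cover into an estimate on $\mathfrak{t}_p(\mathcal N, \mathcal M)$. Since these two results are already in place, what remains is essentially bookkeeping.

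Fix $\mathcal M \supseteq \mathcal N$. First I would reduce to the case in which $\mathcal N$ is closed in $\mathcal M$: given a Lipschitz map $f \colon \mathcal N \to Z$ into a $p$-Banach space $Z$, the completeness of $Z$ in its linear metric topology lets me extend $f$ to $\overline{\mathcal N}$ with the same Lipschitz constant. I would then observe that the Nagata dimension condition passes to $\overline{\mathcal N}$ with the same parameters $(d, \gamma)$, because the closures of the covering sets of $\mathcal N$ form an admissible cover of $\overline{\mathcal N}$ (diameters and the multiplicity bound are preserved under closure). Hence I may assume $\mathcal N$ is closed in $\mathcal M$.

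Next, I would fix a convenient value such as $\epsilon = 1/4 \in (0, 1/2)$ and invoke \Cref{prop:whitney_cover_nagata} to obtain a Whitney cover of $\mathcal M \setminus \mathcal N$ with parameters $(o, s, D_W, a) = (3(d+1),\, 1/8,\, 9(1+\gamma),\, 5)$, where I write $D_W$ for the diameter-to-distance ratio parameter of the cover so as not to clash with the Nagata dimension $d$. \Cref{thm:lipschitz_extension_p} then yields
\[ \mathfrak{t}_p(\mathcal N, \mathcal M) \;\leq\; D(p, s, D_W, a) \cdot C(p, o) \cdot \log_2(2o), \]
and every factor on the right is straightforward to estimate: $C(p, o) = (3(d+1))^{1/p - 1} \lesssim_p (d+1)^{1/p - 1}$; $\log_2(2o) = \log_2(6(d+1)) \lesssim \log(d+2)$; and the explicit formula \eqref{eq:const_D}, evaluated at $s = 1/8$, $a = 5$, and $D_W = 9(1+\gamma)$, gives $D(p, s, D_W, a) \lesssim_p \gamma$, since both terms in the maximum depend linearly on $D_W$ up to $p$-dependent factors. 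Multiplying the three bounds and taking the supremum over $\mathcal M \supseteq \mathcal N$ will produce the claimed estimate on $\mathfrak{ae}_p(\mathcal N)$.

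The only step that needs genuine care is the reduction to a closed subset, which relies on the stability of Nagata covers under passage to the closure. This should be routine, but it is the one piece of the argument not already contained verbatim in the preceding sections.
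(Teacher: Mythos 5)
Your proposal is correct and follows essentially the same route as the paper, which obtains the theorem directly by feeding the Whitney cover of \Cref{prop:whitney_cover_nagata} (with parameters $(3(d+1),\epsilon/2,4(1+\gamma)(\epsilon+2),5)$) into \Cref{thm:lipschitz_extension_p} and tracking the constants exactly as you do. Your extra reduction to closed $\mathcal N$ addresses a point the paper leaves implicit and is fine, with the minor caveat that the multiplicity bound for the closed-up cover is verified at a slightly smaller scale, so the Nagata constant becomes $2\gamma$ rather than $\gamma$ --- harmless for the $\lesssim_p$ estimate.
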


A similar argument to the one used in the proof of \Cref{prop:whitney_cover_nagata} can be used to construct Whitney covers when $\mathcal N$ is doubling, as demonstrated, for example, in \cite[Proposition 5.2]{Albiac2021sums}. However, to the best of our knowledge, this approach would still provide an estimate that is quantitatively the same (in terms of $\lambda_{\mathcal N}$) as the one we obtain by estimating the Nagata dimension of doubling metric spaces, followed by applying \Cref{prop:whitney_cover_nagata}.

\begin{lemma}\label{lemma:doubling_nagata}
    Let $(\mathcal N, \rho)$ be a doubling metric space with doubling constant $\lambda_{\mathcal N}>1$. Then $\mathcal N$ has Nagata dimension at most $\lambda^3_{\mathcal N}-1$ with constant 2.
\end{lemma}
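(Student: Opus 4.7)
The plan is to construct, for each scale $s > 0$, an explicit cover of $\mathcal N$ by closed balls centered at the points of a maximal $s$-separated subset, and to verify that this cover witnesses Nagata dimension at most $\lambda_{\mathcal N}^3 - 1$ with constant $\gamma = 2$. The construction is very short; the only nontrivial point is a multiplicity count, and it is here that the cube $\lambda_{\mathcal N}^3$ arises.

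First I fix $s > 0$ and invoke Zorn's lemma to choose a maximal subset $N_s = \{x_i\}_{i \in I} \subseteq \mathcal N$ with $\rho(x_i, x_j) \geq s$ for all distinct $i, j \in I$. I then set $\mathcal{C} = \{\bar{B}(x_i, s) : i \in I\}$. Conditions \cref{it:nagata_1} and \cref{it:nagata_2} from the definition of Nagata dimension are immediate: maximality forces every $y \in \mathcal N \setminus N_s$ to satisfy $\rho(y, x_i) < s$ for some $i \in I$ (otherwise $N_s \cup \{y\}$ would still be $s$-separated), so $\mathcal{C}$ covers $\mathcal{N}$; and $\diam \bar{B}(x_i, s) \leq 2s = \gamma s$.

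The main step is condition \cref{it:nagata_3}. Given $A \subseteq \mathcal{N}$ with $\diam A \leq s$, I pick any $a \in A$ and observe that if $\bar{B}(x_i, s) \cap A \neq \emptyset$, then the triangle inequality gives $\rho(a, x_i) \leq s + s = 2s$, so all such centers $x_i$ lie in $\bar{B}(a, 2s)$. It therefore suffices to bound $|N_s \cap \bar{B}(a, 2s)|$. Iterating the doubling property three times, $\bar{B}(a, 2s)$ admits a cover by $\lambda_{\mathcal{N}}^3$ closed balls of radius $s/4$. Each of these small balls has diameter at most $s/2 < s$, hence can contain at most one point of the $s$-separated set $N_s$. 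Thus at most $\lambda_{\mathcal{N}}^3$ members of $\mathcal{C}$ meet $A$, giving \cref{it:nagata_3} with $d + 1 = \lambda_{\mathcal N}^3$.

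The only subtle point, and the reason for the exponent $3$, is that the iterated doubling must shrink the radius enough for the small covering balls to have diameter \emph{strictly} less than the separation distance $s$. Two applications of doubling produce balls of radius $s/2$, whose diameter $\leq s$ cannot rule out two $s$-separated points; a third application is therefore needed, yielding radius $s/4$ and diameter $\leq s/2 < s$. This is the mild tradeoff mentioned before the statement, where the logarithmic improvement on the dimension bound is sacrificed in order to keep the constant $\gamma$ small ($\gamma = 2$).
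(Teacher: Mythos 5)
Your proof is correct, and it shares the paper's basic construction---a maximal $s$-separated net together with the closed balls of radius $s$ centered at its points---but it verifies the multiplicity condition \cref{it:nagata_3} by a different, more self-contained argument. The paper first bounds the number of net points in any ball of radius $3s$ by a packing count and then invokes a coloring lemma of Assouad to split the net into $\lambda_{\mathcal N}^3$ color classes, each of which contributes at most one ball meeting a given set $A$ of diameter at most $s$; you instead observe directly that every center whose ball meets $A$ lies in $\bar B(a,2s)$ for a fixed $a\in A$, and you count such centers by iterating the doubling property three times down to radius $s/4$, so that each small ball has diameter $s/2 < s$ and hence contains at most one net point. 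Your route avoids the external coloring lemma entirely and yields the same bound $\lambda_{\mathcal N}^3$, i.e., Nagata dimension at most $\lambda_{\mathcal N}^3-1$ with constant $2$. As a minor remark, if you take the net to be strictly $s$-separated (pairwise distances $>s$, which by maximality still gives a cover by closed balls of radius $s$), then two doubling iterations down to radius $s/2$ already suffice, since a set of diameter at most $s$ then contains at most one net point; this would even improve the conclusion to Nagata dimension at most $\lambda_{\mathcal N}^2-1$ with the same constant, which of course implies the stated bound.
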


\begin{proof}
    Let $s>0$ be given. By the Zorn lemma, we obtain a maximal subset $S$ of $\mathcal N$ that is $s$-separated, i.e., $\rho(x, x') > s$ for every $x,\,x'\in S$.

    We claim that for any $x\in\mathcal N$, the closed ball $B(x, 3s)$ contains no more than $\lambda_{\mathcal N}^3$ elements of $S$. Given that $\mathcal N$ is doubling, we can cover $B(x, 3s)$ with $\lambda_{\mathcal N}^3$ closed balls of radius $s/2$. Each of these balls contains a maximum of one element of $S$, thus proving our claim.

    Consequently, by \cite[Lemma 2.4]{Assouad1983} applied to the subspace $S$, there is a coloring $k : S \to \{ 1, \ldots, \lambda_{\mathcal N}^3 \}$ such that $k(x) \neq k(x')$ whenever $\rho(x, x') \leq 3s$, for any $x,\,x'\in S$ with $x\neq x'$.

    It is easy to see that for any subset $A \subseteq \mathcal N$ with $\diam A \leq s$ and each $i\in\operatorname{Rng}{k}$, there exists at most one $x\in k^{-1} (i)$ for which the closed ball $B(x, s)$ intersects $A$.

    Therefore, if we denote $\mathcal C_i = \{ B(x, s) : x\in k^{-1} (i) \}$, where $i\in\operatorname{Rng}{k}$, and define $\mathcal C = \bigcup \{ \mathcal C_i : i\in\operatorname{Rng}{k}\}$, then $\mathcal C$ satisfies \cref{it:nagata_1,it:nagata_2,it:nagata_3} with $d=\lambda_{\mathcal N}^3 -1$ and $\lambda = 2$.
\end{proof}

Consequently, we obtain the following corollary.

\begin{cor}\label{cor:ae_p_doubling}
    If $\mathcal N$ has doubling constant $\lambda_{\mathcal N} > 1$, then $\mathfrak{ae}_p (\mathcal N) \lesssim_p \lambda_{\mathcal N}^{3/p-3} \cdot \log \lambda_{\mathcal N}$ for all $0<p\leq 1$.
\end{cor}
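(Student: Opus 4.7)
The plan is to obtain the corollary by chaining two results already established in the paper: the bound on the Nagata dimension of doubling spaces from \Cref{lemma:doubling_nagata}, followed by the Lipschitz extension theorem for spaces with finite Nagata dimension in \Cref{thm:ae_p_nagata}. Concretely, I would start by invoking \Cref{lemma:doubling_nagata} to conclude that if $\mathcal N$ is a doubling metric space with constant $\lambda_{\mathcal N} > 1$, then $\mathcal N$ has Nagata dimension at most $d = \lambda_{\mathcal N}^3 - 1$ with Nagata constant $\gamma = 2$.

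Next I would substitute these values into the estimate from \Cref{thm:ae_p_nagata}, obtaining
\[
\mathfrak{ae}_p (\mathcal N) \lesssim_p 2 \cdot (\lambda_{\mathcal N}^3)^{1/p - 1} \cdot \log(\lambda_{\mathcal N}^3 + 1).
\]
The only thing left to check is that this simplifies to the stated bound. The factor $(\lambda_{\mathcal N}^3)^{1/p - 1}$ equals $\lambda_{\mathcal N}^{3/p - 3}$ by the homogeneity of exponentials, and since $\lambda_{\mathcal N}$ is an integer with $\lambda_{\mathcal N} \geq 2$, we have $\log(\lambda_{\mathcal N}^3 + 1) \leq \log(2\lambda_{\mathcal N}^3) = \log 2 + 3\log \lambda_{\mathcal N} \lesssim \log \lambda_{\mathcal N}$ because $\log \lambda_{\mathcal N} \geq \log 2 > 0$. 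The constant $2$ out front and the implicit constant from the $\lesssim_p$ notation absorb into a single $p$-dependent constant.

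There is no genuine obstacle in this argument; the work is entirely distributed across the referenced lemma and theorem, and the corollary amounts to bookkeeping of exponents and a routine simplification of the logarithm. The only subtle point to be careful about is making explicit that $\lambda_{\mathcal N} > 1$ (integer-valued) justifies absorbing the additive constants inside $\log(\lambda_{\mathcal N}^3 + 1)$ into the $\log \lambda_{\mathcal N}$ term.
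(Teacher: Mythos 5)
Your proposal is correct and takes precisely the route the paper intends: the corollary is stated immediately after \Cref{lemma:doubling_nagata} with the preamble ``Consequently, we obtain the following corollary,'' meaning exactly the chaining of \Cref{lemma:doubling_nagata} (giving $d = \lambda_{\mathcal N}^3 - 1$, $\gamma = 2$) into \Cref{thm:ae_p_nagata}, followed by the exponent and logarithm bookkeeping you carry out. Your observation that $\lambda_{\mathcal N} \in \mathbb N$ with $\lambda_{\mathcal N} > 1$ forces $\lambda_{\mathcal N} \geq 2$, which is needed to absorb $\log(\lambda_{\mathcal N}^3 + 1)$ into $\log \lambda_{\mathcal N}$, is the right (and only) detail to attend to.
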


Continuing the applications of \Cref{thm:ae_p_nagata}, we note that an important class of spaces with finite Nagata dimension is provided by \emph{metric trees}, which are geodesic metric spaces where all geodesic triangles are degenerate. Given that every metric tree has Nagata dimension at most 1 with constant 6 (see \cite[Proposition 3.2]{Lang2005}, where the result is formulated without stating the exact value of the constant $6$; however, this value can be recovered from the proof), we derive the following generalization of a classical extension result due to \textcite{Matousek1990}. Note that we implicitly use the fact that if a space $\mathcal N$ has a Nagata dimension of at most $d$ with a constant $\gamma$, then any of its subspaces does as well.

\begin{cor}\label{cor:ae_p_trees}
    If $T$ is a metric tree and $S\subseteq T$ any its subset, then $\mathfrak{ae}_p (S) \lesssim_p 1$ for any $0<p\leq 1$.
\end{cor}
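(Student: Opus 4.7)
The plan is to reduce this to a direct application of \Cref{thm:ae_p_nagata}, since metric trees are already known to have very small Nagata dimension. First I would invoke \cite[Proposition 3.2]{Lang2005}, which asserts that every metric tree $T$ has Nagata dimension at most $1$ with constant $6$ (the explicit value of the constant being extracted from the proof there, as noted in the paragraph preceding the statement). Second, I would observe that the defining conditions \cref{it:nagata_1,it:nagata_2,it:nagata_3} of Nagata dimension pass to subspaces: given a cover $\mathcal{C}$ of $T$ witnessing the bound at scale $s$, the family $\{C\cap S : C\in\mathcal{C},\ C\cap S\neq\emptyset\}$ covers $S$, preserves the diameter bound, and its intersection pattern with any diameter-$\leq s$ subset of $S$ is controlled by that of $\mathcal{C}$. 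Hence $S$ also has Nagata dimension at most $1$ with constant $6$.

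With these two ingredients in hand, the corollary follows immediately from \Cref{thm:ae_p_nagata} applied with $d=1$ and $\gamma = 6$, which yields
\[
\mathfrak{ae}_p(S) \;\lesssim_p\; 6\cdot 2^{1/p - 1}\cdot \log 3 \;\lesssim_p\; 1
\]
for every $0<p\leq 1$, as required.

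There is no genuine obstacle here: all the real work has already been done in \Cref{thm:lipschitz_extension_p} and \Cref{prop:whitney_cover_nagata}, which combine to give \Cref{thm:ae_p_nagata}; the corollary is purely a matter of plugging in the numerical values $d=1$, $\gamma=6$ coming from the geometry of metric trees, and checking hereditarity of the Nagata dimension under taking subspaces.
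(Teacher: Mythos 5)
Your proposal is correct and follows essentially the same route as the paper: the paper likewise obtains the corollary by combining the bound from \cite[Proposition 3.2]{Lang2005} (Nagata dimension at most $1$ with constant $6$ for metric trees), the hereditarity of the Nagata dimension under passing to subspaces, and \Cref{thm:ae_p_nagata}. Nothing further is needed.
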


As we will see, \Cref{cor:ae_p_trees} is in fact a specific corollary of a more general result showing that a \emph{metric graph} induced by a \emph{weighted graph}, which excludes the complete graph $K_m$ as a \emph{minor}, has finite Nagata dimension. In particular, this dimension, along with the associated constant $\gamma$, can be bounded in terms of $m$. The connection to metric trees is that they are induced by weighted graphs which do not contain cycles, meaning they exclude $K_3$, the complete graph on three vertices, as a minor.

Before we generalize this result to graphs that exclude minors of higher degrees, we recall the following definition.

\begin{defn}[{\cite{Lee2004}}]\label{def:metric_graph}
    Let $G = (V, E)$ be a connected undirected graph and let $\phi : E \to [0, \infty)$ be a weight function that assigns weights to the edges in $E$. Consider the \emph{one-dimensional simplicial complex} $\Sigma (G)$, which is formed by replacing every edge $e\in E$ with a segment of length equal to $\phi(e)$. The \emph{metric graph $(\Sigma(G), \rho_\phi)$ induced by $G$ and $\phi$} denotes $\Sigma (G)$ with its inherent Riemannian semimetric structure, which restricts to the shortest path semimetric on the vertices of $G$.
\end{defn}

Note that even when all the weights are positive, if the set of edges is infinite, the resulting space $(\Sigma (G), \rho_\phi)$ can fail to satisfy the separation axiom. In other words, it becomes a \emph{semimetric} space. However, in what follows, we can readily adapt the metric notions to the semimetric setting, or implicitly transition to an induced quotient space $(\Sigma(G) / \sim, \rho'_\phi)$, where we set $v$ to $w$ if and only if $\rho_\phi (v, w) = 0$. There will be no loss of generality in the present context due to these considerations.

\begin{defn}
    Given a connected graph $G = (V, E)$, we say that a graph $G'$ is a \emph{minor of $G$} if it can be derived from $G$ in a finite sequence of the following operations:
    \begin{enumerate}[label=(\roman*), ref=\roman*]
        \item removing an edge $\{v, w \}\in E$, which results in a graph $G' = (V, E')$, where $E' =  E \setminus \{ e \}$,
        \item contracting an edge $\{v, w\} = e\in E$, which results in a graph $G' = (V', E')$, where $V' = V \setminus \{ w \}$ and $E' = E \setminus \{ e \in E: w\in e \} \cup \{ \{v, u\}: \{w, u\} \in E,\, u\neq v\}$.
    \end{enumerate}
\end{defn}

The concept of minor exclusion is of significant importance in the theory of topological and geometric properties of graphs. For example, it is closely linked to the problem about embeddability of graphs into Banach spaces. We recall that the Kuratowski Theorem characterizes planar graphs only in terms of minor exclusion (refer also to \cite{Tverberg1988}).

More relevant to the topic of this paper, there has also been considerable interest in the study of dimensionality invariants of graphs, focusing particularly on the asymptotic and Nagata dimensions of semimetrics induced by minor-excluded graphs and minor-closed families of graphs. Consider the contributions by \textcite{Gromov1993,Ostrovskii2015,Liu2023,Bonamy2023,Distel2023}, to name a few.

Specifically, \textcite{Ostrovskii2015} established that the Nagata dimension of \emph{unweighted graphs} that exclude the complete graph on $m$ vertices, $K_m$, is at most $4^m-1$ with a constant that depends only on $m$. In this paper, we generalize their result to the context of weighted graphs. That is, we show the following.

\begin{prop}\label{prop:nagata_minor_excluded}
    If $G=(V, E)$ is a countable and connected weighted graph which excludes the complete graph $K_m$ as a minor, then the metric graph $(\Sigma(G), \rho)$ has a Nagata dimension at most $3^{2m-2}-1$  with constant $\gamma \lesssim m$.
\end{prop}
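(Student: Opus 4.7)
My plan is to follow the argument of \textcite{Ostrovskii2015} for unweighted graphs and adapt it to the weighted metric-graph setting by induction on $m$. The base cases $m\in\{2,3\}$ reduce to known facts: a connected $K_2$-minor-free graph is a single vertex, while a connected $K_3$-minor-free graph is a tree, so $\Sigma(G)$ is a metric tree with Nagata dimension at most $1$ and constant $6$ (Lang--Schroeder, as cited above), well within the claimed bound. As a preliminary reduction, I would subdivide every edge of weight $\geq s$ by inserting finitely many degree-$2$ auxiliary vertices; the resulting graph is countable, has all edge weights $<s$, induces the same metric graph $\Sigma(G)$, and still excludes $K_m$ as a minor. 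It then suffices to produce a Nagata cover of the vertex set $V$: enlarging each cover set by including, for every one of its vertices, the incident (half-)edges, yields a cover of $\Sigma(G)$ with the same multiplicity and an additive $O(s)$ increase in diameter.

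For the inductive step, fix a root $v_0\in V$, and for each offset $\beta\in\{0,s,2s\}$ and $k\in\mathbb N$ define the BFS slab
\[ S_k(\beta) = \{v \in V : (k-1)\cdot 3s + \beta \leq \rho(v_0,v) < k\cdot 3s + \beta\}. \]
To each slab I associate an auxiliary weighted graph $G_k^\beta$ obtained from the subgraph of $G$ on $\bigcup_{j\leq k}S_j(\beta)$ by contracting each connected component of the earlier-slab subgraph on $\bigcup_{j<k}S_j(\beta)$ to a single \emph{phantom} vertex, retaining the original weights of its incident edges to $S_k(\beta)$. Applying the inductive hypothesis to $G_k^\beta$ gives a Nagata cover at scale $s$ with multiplicity $3^{2m-4}$ and diameter constant $\gamma_{m-1}\lesssim m-1$. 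Restricting these covers to $S_k(\beta)$, taking the union over $k$, and then over the three offsets $\beta$, produces the required cover of $V$. A combinatorial count shows that the multiplicity picks up at most a factor of $9$ per inductive step (at most three offsets times at most three contributing slab graphs, once slab boundaries and phantom-vertex overlaps are accounted for), giving a total multiplicity of at most $9\cdot 3^{2m-4}=3^{2m-2}$; the diameter constant satisfies the recursion $\gamma_m = \gamma_{m-1} + O(1)$, so $\gamma_m\lesssim m$.

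The main obstacle is the \emph{KPR-style minor-freeness claim} that each $G_k^\beta$ excludes $K_{m-1}$ as a minor. The argument is by contradiction: a $K_{m-1}$-minor in $G_k^\beta$ with branch sets $B_1,\dots,B_{m-1}$ can be uncontracted in $G$ by replacing each phantom vertex appearing in some $B_i$ with the full connected component it represents inside $\bigcup_{j<k}S_j(\beta)$. Since every such component is joined to $v_0$ by a BFS-path lying entirely in $\bigcup_{j<k}S_j(\beta)$, one constructs an additional connected branch set $B_m$ by taking the union of $\{v_0\}$ with a BFS-path from $v_0$ to one vertex adjacent to each (uncontracted) $B_i$, cutting each path just before entering $B_i$ to ensure disjointness. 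The resulting $B_1,\dots,B_m$ then witness a $K_m$-minor in $G$, contradicting the hypothesis. The technical point that does not arise in the unweighted setting is verifying that distances in $G_k^\beta$ (including those realized via phantom shortcuts) are within a constant factor of the restriction of $\rho$ to $S_k(\beta)$, so that the inductive hypothesis applied at scale $s$ inside $G_k^\beta$ actually yields pieces of diameter $O(s)$ in $(V,\rho)$. This is where the preliminary subdivision step is essential: it ensures every phantom shortcut in $G_k^\beta$ corresponds to an actual path of length at most $O(s)$ in $G$ through earlier slabs, and no long single edge can ``skip'' a slab.
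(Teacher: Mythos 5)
Your plan is recognizably the KPR/Ostrovskii inductive scheme, whereas the paper runs a \emph{non-recursive} $2m-2$-round Fakcharoenphol--Harrelson--Rao--Talwar-style annular decomposition (in the internal metric $\rho_C$ of the current piece) and proves termination via a separate combinatorial lemma (\Cref{lemma:cluster_distances}). The distinction matters, because it is exactly what lets the paper sidestep the metric-compatibility issue that your sketch runs into.

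The gap is in the step where you restrict the inductive Nagata cover of $G_k^\beta$ to the slab $S_k(\beta)$ and claim its pieces have small $\rho$-diameter. A piece $P$ of that cover has $\rho_{G_k^\beta}$-diameter $\leq \gamma_{m-1}s$, but $\rho_{G_k^\beta}$ is the metric of the \emph{contracted} graph, and contracting a connected component of $\bigcup_{j<k}S_j(\beta)$ to a phantom vertex can create shortcuts of essentially zero length between vertices of $S_k(\beta)$ that are far apart in $\rho$: two vertices $u,v\in S_k(\beta)$, each adjacent (after subdivision, via edges of weight $<s$) to the same earlier-slab component, satisfy $\rho_{G_k^\beta}(u,v)<2s$ regardless of the $\rho$-diameter of that component. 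Your preliminary subdivision only controls individual edge lengths; it does not bound the diameter of the contracted component, so the assertion that ``every phantom shortcut corresponds to a path of length at most $O(s)$ in $G$'' is false. Since $\rho_{G_k^\beta}\leq\rho_{G[S_k(\beta)]}$ while $\rho\leq\rho_{G[S_k(\beta)]}$, there is no comparison between $\rho_{G_k^\beta}$ and $\rho$ in the direction you need, and a piece small in $\rho_{G_k^\beta}$ can be unboundedly large in $\rho$. Note also the tension that makes this hard to patch: if you avoid the contraction and instead apply the inductive hypothesis directly to a connected component of the slab (to stay metrically faithful), that component is only $K_m$-minor-free, not $K_{m-1}$-minor-free, so the induction on $m$ no longer descends. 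The paper resolves this by never passing to a contracted metric: all annuli are cut in the internal metric $\rho_C\geq\rho$ of the current connected piece $C$, so small internal diameter automatically gives small $\rho$-diameter, and minor-exclusion enters only to bound the number of rounds via \Cref{lemma:cluster_distances}.

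Your overlap bookkeeping ($9$ per level, union over three offsets and slab-boundary contributions) and the additive recursion for $\gamma$ are plausible in outline and consistent with the target $3^{2m-2}$ and $\gamma\lesssim m$, but they cannot be assessed further until the diameter control above is repaired.
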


Our approach is based on the general Klein--Plotkin--Rao decomposition technique for unweighted graphs, introduced by \textcite{Klein1993} and later refined by \textcite{Fakcharoenphol2003, Abraham2014}. This method also surfaced by \textcite{Ostrovskii2015}. Furthermore, it was applied to metric graphs by \textcite{Lee2004}, where the technique was employed to establish the existence of stochastic decompositions, as to address the Lipschitz extension problem.

The very general idea of the proof is to decompose the metric graph into a set of metric annuli and consider the connected components within these annuli. Then, we inductively apply the decomposition algorithm on each of these connected components.

\begin{notation}
    In what follows, if $C$ is a connected subset of $\Sigma (G)$, we let $\rho_C$ denote the \emph{weak distance} on $C$, i.e., $\rho_C(w, w')$ for $w,\,w' \in C$ is defined as the infimum over lengths of paths in $C$ connecting $w$ to $w'$.
\end{notation}

In the sequel, we will require the following technical result. Its proof is highly technical and relies on the decomposition technique borrowed from \textcite{Fakcharoenphol2003}, which has been adapted here to the setting of metric graphs. Consequently, the proof will be presented in the \nameref{sec:appendix}.

\begin{restatable}{lemma}{distance}\label{lemma:cluster_distances}
    Let $G$ be a countable and connected weighted graph such that there are $r>0$, $m\in \mathbb{N}$ with $m\geq 3$, $\delta\in \{0,1,2\}^{m-1}$, $\Sigma(G)= C_1\supseteq \ldots \supseteq C_m$, and points $s_i\in C_i$ for $i\in \{1,\ldots, m\}$ satisfying that for every $i,\,i'\in\{1, \ldots, m\}$, where $i<i'$, we have $\rho(s_i, s_{i'}) > 24mr$ and the set $C_{i+1}$ is a pathwise connected component of $A_{n_i} = \{v\in C_i\colon 3r(n_i-1)\leq \rho_{C_i}(v,s_i)-\delta_i r < 3rn_i\}$, for some $n_i\in\mathbb{N}_0$.
    
    Then $G$ contains $K_m$ as a minor.
\end{restatable}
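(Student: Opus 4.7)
The plan is to exhibit a $K_m$-minor of $G$ by constructing $m$ pairwise vertex-disjoint pathwise-connected subsets $V_1,\ldots,V_m\subseteq\Sigma(G)$ with $s_i\in V_i$, such that for every pair $i<j$ some edge of $G$ joins $V_i$ to $V_j$.

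First I would use the hypothesis $\rho(s_i,s_{i'})>24mr$ to pin down the annular geometry. Since $s_{i+1}\in C_{i+1}\subseteq A_{n_i}$, one has $\rho_{C_i}(s_i,s_{i+1})<3rn_i+2r$; combining with $\rho_{C_i}(s_i,s_{i+1})\geq\rho(s_i,s_{i+1})>24mr$ forces $n_i\geq 8m$, so that in particular $s_i\notin A_{n_i}\supseteq C_{i+1}$, i.e.\ $s_i\in C_i\setminus C_{i+1}$ for every $i<m$. Consequently, for every pair $i<j$, any $\rho_{C_i}$-geodesic $\sigma_{i,j}$ in $C_i$ from $s_i$ to $s_j$ possesses a well-defined first ``entry edge'' of $G$ running from a vertex of $C_i\setminus C_{i+1}$ to a vertex of $C_{i+1}$.

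Next I would assemble the branches. Set $V_m=C_m$, which is pathwise connected. For $k<m$, let $V_k$ be the union of (i) the initial portions of the fixed $\rho_{C_k}$-geodesics $\sigma_{k,j}$ with $j>k$, each truncated at its first entry edge into $C_{k+1}$, so that they all meet at $s_k$ and lie inside $C_k\setminus C_{k+1}$; together with (ii) for each $j>k+1$, a ``tendril'' extending inward through $C_{k+1},\ldots,C_{j-1}$ and terminating at a vertex of $G$ adjacent to $V_j$. The tendrils are unavoidable because the edge linking $V_k$ to $V_j$ for $j>k+1$ cannot lie on the boundary of $A_{n_k}$ alone but must reach inside $C_j$.

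The main technical obstacle will be arranging the tendrils so that the branches remain pairwise vertex-disjoint while every required adjacency is still realised. I would construct $V_m,V_{m-1},\ldots,V_1$ in this order and, at each step, use the pathwise connectedness of the relevant $C_\ell$ together with the separation $\rho(s_k,s_{k'})>24mr$ to route each new tendril around the already-placed $V_{k+1},\ldots,V_m$. The quantitative point is that the factor $m$ in $24mr$, combined with the annular width $3r$, forces the entry points into any common $C_\ell$ coming from distinct outer geodesics to be well-separated in the weak metric $\rho_{C_{\ell-1}}$; this leaves enough room to confine each tendril to a neighbourhood of its own $\rho_{C_\ell}$-geodesic and thereby avoid collisions. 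Any failure of this routing would translate, via the annular structure, into a pair of paths in some $C_\ell$ witnessing a separation smaller than $24mr$ between two of the $s_i$, contradicting the hypothesis. Once the $V_k$ are built this way, each is connected by design, they are pairwise vertex-disjoint, and the edges supplied at the inner ends of the tendrils yield the required pairwise adjacencies, exhibiting $K_m$ as a minor of $G$.
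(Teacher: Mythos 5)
There is a genuine gap, and it sits exactly at the point you flag as ``the main technical obstacle.'' Your preliminary observations are fine (the estimate forcing $n_i\geq 8m$ and $s_i\notin C_{i+1}$ is correct, modulo the fact that $\rho_{C_i}$-geodesics need not exist in a metric graph, so one must work with $\epsilon$-paths as the paper does), but the heart of the lemma is the claim that the inward tendrils from distinct branches can be routed pairwise vertex-disjointly while realising all adjacencies, and this is only asserted. The proposed justification --- ``any failure of this routing would translate into two of the $s_i$ being closer than $24mr$'' --- is not proved and is not a consequence of the annular geometry: the hypotheses give metric separation of the centres but no connectivity (Menger-type) condition guaranteeing many disjoint inward paths through a common annulus. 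For instance, nothing prevents $C_{j}$ from being attached to the rest of $C_{j-1}$ through a single cut vertex or a single edge; then the $j-1$ branch-private tendrils that your scheme requires to penetrate towards $V_j$ (one from each of $V_1,\dots,V_{j-1}$) would all have to cross that bottleneck and could not be vertex-disjoint, yet no pair of centres need be close. (A smaller instance of the same problem: even the adjacency between $V_k$ and $V_{k+1}$ is not secured by the ``entry edge'' of the truncated geodesic, since the vertex on its inner side lies in $C_{k+1}$ but not necessarily in, or adjacent to, the small hub you take as $V_{k+1}$.)

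The paper's construction is designed precisely to avoid ever needing several disjoint long paths through the same annuli. Working outward, at each stage the \emph{new} supernode centred at $s_{m-i}$ is defined as the union of the $\epsilon$-paths from all previously built supernodes to $s_{m-i}$, with their initial tails of length $24r$ removed; since all these long connectors belong to one and the same branch, they need not avoid one another. Each old supernode absorbs only its own short tail, and the adjacency between it and the new supernode is realised locally at the tail's tip. The quantitative hypotheses ($\rho(s_i,s_{i'})>24mr$ together with the $3r$ width of the annuli defining $C_{i+1}\subseteq C_i$) are then used only to show that these short tails stay away from the other supernodes and from the trimmed parts of the paths (properties \textup{(iv)}--\textup{(x)} in the appendix), which is a far weaker demand than the global disjoint routing your plan needs. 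So your outline would have to be reorganised along these lines --- assigning the long connectors to the outer branch rather than sending private tendrils inward --- before it could be turned into a proof.
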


We are now ready to give the proof of \Cref{prop:nagata_minor_excluded}.

\begin{proof}
    Let $r > 0$. In what follows, we can assume that $\diam \Sigma (G) > 24mr$, since otherwise $\{ \Sigma (G) \}$ would be a Nagata cover with the desired properties.

    We set $A_\emptyset = \{ \Sigma (G) \}$, $\mathcal I_\emptyset = \emptyset$, and $F_\emptyset (\Sigma (G)) = (v, )$ (which is a sequence of length one) for some $v\in \Sigma (G)$. For each $i\in \{ 1, \ldots, 2m-2\}$, we inductively define sets $A_{\delta},\,I_\delta \subseteq \mathcal P(\Sigma(G))$ and a map $F_\delta: A_{\delta} \to (\Sigma (G) \cup \{ \emptyset \})^{i+1}$ (without loss of generality, assume that $\emptyset\notin \Sigma (G)$), where $\delta \in \{ 0, 1, 2 \}^i$, with the following properties.
    
    Specifically, we will assume that for each $i\in\{0, \ldots, 2m-2\}$ and $\delta = (\delta_1, \ldots, \delta_i) \in \{ 0, 1, 2 \}^i$ (where we use the convention that $\delta = \emptyset$ if $i=0$), 
    \begin{enumerate}[label=(\roman*), ref=\roman*]
        \item\label{it:decomposition_partitions} $\mathcal A_\delta \cup \mathcal I_\delta$ is a partition of $\Sigma (G)$,
        \item\label{it:decomposition_monotonous} for any $C\in\mathcal A_\delta$, there exists a unique $C' \in \mathcal A_{(\delta_1, \ldots, \delta_{i-1})}$ such that $C \subseteq C'$. Moreover, $F_\delta (C) \supset F_{(\delta_1, \ldots, \delta_{i-1})} (C')$,
        \item\label{it:decomposition_I_diameter} for any $C\in \mathcal I_\delta$, we have $\diam C \leq (48m+6)r$,
        \item\label{it:decomposition_sequence} for any $C \in \mathcal A_\delta$, if we denote $F_\delta (C) = (s_j)_{j=1}^{i+1}$, then either $s_{i+1} \in C$ or $s_{i+1}=\emptyset$. Moreover, if $l$ is the greatest $l\in\{1,\ldots, i+1\}$ such that $s_{l'}\neq\emptyset$ for each $l'\leq l$, then $s_{l''} = \emptyset$ for every $l''>l$,
        \item\label{it:decomposition_sequence_distance1} if $C$, $(s_j)_{j=1}^{i+1}$, and $l$ are as above and, moreover, $l<i+1$, then for every $v\in C$ there exists $j\in\{1,\ldots, l\}$ such that $\rho(s_j, v) \leq 24rm$,
        \item\label{it:decomposition_A_empty} if $C$, $(s_j)_{j=1}^{i+1}$, and $l$ are as above, and if $j\in\{1,\ldots, l\}$ is such that $l+j\leq i$, then $\rho(s_j, v) > 24rm$ for any $v\in C$. Hence, by \cref{it:decomposition_sequence_distance1}, we have that $i < 2l$,
        \item\label{it:decomposition_sequence_distance2} if $C$, $(s_j)_{j=1}^{i+1}$, and $l$ are as above, then $\rho(s_j, s_{j'}) > 24rm$ for every $j,\,j' \leq l$, where $j\neq j'$,
        \item\label{it:decomposition_annuli} if $C$ and $(s_j)_{j=1}^{i+1}$ are as above, and if $s_i\neq\emptyset$ and $C' \in \mathcal A_{(\delta_1, \ldots, \delta_{i-1})}$ is such that $C\subseteq C'$, then $C$ is a pathwise connected component of $\{v\in C'\colon 3r(n-1)\leq \rho_{C'}(v,s_i)-\delta_{i} r < 3rn\}$ for some $n\in\mathbb{N}_0$.
    \end{enumerate}

    Moreover, we will assume that
    \begin{enumerate}[start=9,label=(\roman*), ref=\roman*]
        \item\label{it:decomposition_ball_inclusion} for any $v\in \Sigma (G)$, there exists $\delta\in\{ 0, 1, 2 \}^i$ and $C \in \mathcal A_\delta \cup \mathcal I_\delta$ such that the open ball of radius $r$ centered at $v$ is contained in $C$, i.e., $U(v, r) \subseteq C$.
    \end{enumerate}
    
    Let $i\in \{1, \ldots, 2m-2\}$ be such that for any $\delta \in \{ 0, 1, 2 \}^{i-1}$, the sets $\mathcal A_{\delta}$, $\mathcal I_{\delta}$, and the associated map $F_{\delta}$ were defined, satisfying \cref{it:decomposition_partitions,it:decomposition_I_diameter,it:decomposition_sequence,it:decomposition_sequence_distance1,it:decomposition_sequence_distance2,it:decomposition_A_empty}.

    Pick $\delta = (\delta_1, \ldots, \delta_{i-1}, \delta_i) \in \{ 0, 1, 2 \}^i$. If $\mathcal A_{(\delta_1, \ldots, \delta_{i-1})} = \emptyset$, we put $\mathcal A_\delta = \emptyset$ and $\mathcal I_\delta = \mathcal I_{(\delta_1, \ldots, \delta_{i-1})}$. Otherwise if $\mathcal A_{(\delta_1, \ldots, \delta_{i-1})}\neq\emptyset$, we define for each $C\in \mathcal A_{(\delta_1, \ldots, \delta_{i-1})}$ the sets $a_C$ and $i_C$ as follows. Let $(s_j)_{j=1}^i = F_{(\delta_1, \ldots, \delta_{i-1})} (C)$ and pick the greatest $l\in \{1, \ldots, i\}$ such that for any $l'\leq l$, we have $s_{l'} \neq\emptyset$. 
    
    Assume that $s_i \neq \emptyset$. We let $i_C = \emptyset$. As for $a_C$, we consider the annuli
    \begin{equation}\label{eq:metric_graph_annuli}
        A_n = \{v \in C: 3r(n - 1) \leq \rho_C(v, s_{i}) - \delta_i r < 3rn\}, \quad n\in\mathbb N_0 \text.
    \end{equation}
    Subsequently, if $A_n'$ denotes the set of disjoint pathwise connected topological components of $A_n$ for each $n \in \mathbb N_0$, we define $a_C = \bigcup_{n \in \mathbb N_0} A_n'$.

    If $s_i = \emptyset$, we consider the following three cases. Let $B = \{ v\in C: \rho(s_{i-l}, v) \leq 24mr \}$, where we use that $i-l\leq l$ by \cref{it:decomposition_A_empty}. If $B$ is empty, we put $a_C = \{ C \}$ and $i_C = \emptyset$. If $B$ is non-empty and $B_{3r} = \{ v\in C: \rho_C(B, v) \leq 3r \}$ contains all $C$, we put $a_C = \emptyset$ and $i_C = \{ C \}$.

    In the remaining case, when $B$ and $C \setminus B_{3r}$ are non-empty, we put $B_{\delta_i r} = \{ v\in C: \rho_C(B, v) \leq \delta_i r \}$ and let $i_C$ and $a_C$ be the sets of connected components in $B_{\delta_i r}$ and $C \setminus B_{\delta_i r}$, respectively.

    Having defined $a_C$ and $i_C$ for each $C\in\mathcal A_{(\delta_1, \ldots, \delta_{i-1})}$, we put \[\mathcal A_\delta = \bigcup \{ a_C : C\in \mathcal A_{(\delta_1, \ldots, \delta_{i-1})} \} \] and \[\mathcal I_\delta = \mathcal I_{(\delta_1, \ldots, \delta_{i-1})} \cup \bigcup \{ i_C : C\in \mathcal A_{(\delta_1, \ldots, \delta_{i-1})} \} \text. \]
    
    Note that by the induction hypothesis, it is easy to see that $\mathcal A_\delta \cup \mathcal I_\delta$ is a partition of $\Sigma (G)$, which establishes \cref{it:decomposition_partitions}. Also, the first part of \cref{it:decomposition_monotonous} clearly is satisfied.

    To address condition \cref{it:decomposition_I_diameter}, by the inductive assumption is suffices to pick $C\in i_{C'}$ for some $C'\in \mathcal A_{(\delta_1, \ldots, \delta_{i-1})}$. Note that for any $C \in \mathcal I_{(\delta_1, \ldots, \delta_{i-1})}$, we can find $C'\in \mathcal A_{(\delta_1, \ldots, \delta_{i-1})}$ such that $C \in i_{C'}$. Also, if $i$ and $l$ are as in the definition of $i_{C'}$, it is easy to see that, indeed, $\rho(v, s_{i-l}) \leq 24mr+3r$ for any $v\in C$. Consequently, it follows from the triangle inequality that $\diam C \leq (48m+6)r$.
    
    For each $C\in \mathcal A_\delta$, we define $F_\delta (C)$ as follows. First, we pick $C' \in\mathcal A_{(\delta_1, \ldots, \delta_{i-1})}$ such that $C' \supseteq C$, i.e., $C \in a_{C'}$. If we denote $(s'_j)_{j=1}^i = F_{(\delta_1, \ldots, \delta_{i-1})} (C')$, we let $F_\delta (C) = (s_j)_{j=1}^{i+1}$ be such that $s_j = s'_j$ for any $j\in \{ 1, \ldots, i\}$. Also, if $s_i\neq\emptyset$ and there exists a point $v\in C$ at distance greater than $24mr$ from $\{ s_j : j\in\{1, \ldots, i\}\}$, we let $s_{i+1} = v$. Otherwise we put $s_{i+1} = \emptyset$. It is easy to see that $F_\delta$ satisfies the remaining part of \cref{it:decomposition_monotonous} and \cref{it:decomposition_sequence,it:decomposition_sequence_distance1,it:decomposition_sequence_distance2}. Also \cref{it:decomposition_annuli} clearly follows from the inductive assumption and the construction.

    As for \cref{it:decomposition_A_empty}, let $C$, $(s_j)_{j=1}^{i+1}$, and $l$ be as in the statement. By the inductive hypothesis, we can assume that $j=i-l \geq 1$. But then, if $C' \in\mathcal A_{(\delta_1, \ldots, \delta_{i-1})}$ is such that $C' \supseteq C$, it follows from the construction that any $v\in C'$ with $\rho(s_j, v) \leq 24mr$ is now contained in some component within $\mathcal I_\delta$. In other words, $\rho(s_j, v) > 24mr$ for any $v\in C$.

    To establish \cref{it:decomposition_ball_inclusion}, pick $v\in \Sigma (G)$ and assume that for some $(\delta_1, \ldots, \delta_{i-1})\in\{ 0, 1, 2 \}^{i-1}$ and $C \in \mathcal A_{(\delta_1, \ldots, \delta_{i-1})} \cup \mathcal I_{(\delta_1, \ldots, \delta_{i-1})}$, it holds that $U(v, r) \subseteq C$. Assume that $F_\delta (C) = (s_j)_{j=1}^{i}$ satisfies that $s_{i}\neq\emptyset$. Then, if $\delta_i$ is such that $\rho_C (v, s_{i}) - \delta_i r \in [3r(n - 1) + r, 3rn - r)$ for some $n \in \mathbb N_0$, then also $U(v, r) \subseteq A_n = \{w \in C: 3r(n - 1) \leq \rho_C (w, s_{i}) - \delta_i r < 3rn\}$. It is easy to see that $U(v, r)$ remains within a single connected component in $A_n$. 
    
    As for the remaining cases, note that if $s_{i}=\emptyset$ and $B$ and $C \setminus B_{3r}$ are as in the part where we defined the corresponding sets $a_C$ and $i_C$, it suffices to consider the case when $B\neq\emptyset$ and $C \setminus B_{3r} \neq\emptyset$. If $\rho_C(B, v) \leq r$, we put $\delta_i = 2$ and then $U(v,r)$ clearly remains within a single connected component in $B_{\delta_i r} = \{ u\in C: \rho_C(B, u) \leq 2r \}$. Otherwise if $\rho_C(B, v) > r$, we let $\delta_i = 0$, in which case $U(v,r)$ remains within a single connected component in $C\setminus B_{\delta_i r} = \{ u\in C: \rho_C(B, u) > 0 \}$.
    
    In the following part, we will use the sets $\mathcal I_{\delta} \subseteq \mathcal P(\Sigma(G))$, where $\delta\in\{0, 1, 2\}^{2m-2}$, to construct the actual Nagata cover of $\Sigma(G)$.

    To that end, let us first verify that $\mathcal A_{\delta} = \emptyset$ for each $\delta=(\delta_1, \ldots, \delta_{2m-2})\in\{0, 1, 2\}^{2m-2}$. Indeed, let us assume for a contradiction that there exists $C \in \mathcal A_{(\delta_1, \ldots, \delta_{2m-2})}$. It follows from \cref{it:decomposition_annuli,it:decomposition_sequence_distance2} and \Cref{lemma:cluster_distances} that if we put $F_{(\delta_1, \ldots, \delta_{2m-2})} (C) = (s_j)_{j=1}^{2m-1}$, then $s_m = \emptyset$, i.e., $l<m$. Consequently, \cref{it:decomposition_monotonous,it:decomposition_A_empty} show that $2m-2 < 2l \leq 2(m-1)$, which is absurd.
    
    In particular, it follows that each set $\mathcal I_{\delta}$ is a partition of $\Sigma (G)$.

    Similarly, \cref{it:decomposition_I_diameter} shows that the diameter of each set in $\mathcal I_\delta$ is bounded by $(48m+6)r$.
    
    For every $\delta \in \{0, 1, 2\}^{2m-2}$ and each element $C \in \mathcal I_{\delta}$, we consider the set
    \begin{equation}\label{eq:C_interior}
        C' = \{x \in C: \rho(x, \bigcup \{ S : S \in \mathcal I_\delta \setminus \{C\} \}) \geq r\}\text,
    \end{equation}
    and put $\mathcal I'_{\delta} = \{C': C \in \mathcal I_{\delta}\}$.

    It is easy to see from \eqref{eq:C_interior} that for any subset $A \subseteq \Sigma(G)$ with $\diam A \leq r/2$, we have
    \begin{equation*}
        |\{C' \in \mathcal I'_{\delta}: C' \cap A \neq \emptyset\}| \leq 1, \quad \delta \in \{0, 1, 2\}^{2m-2}\text.
    \end{equation*}
    Consequently,
    \begin{equation}\label{eq:intersection}
        |\{C' \in \bigcup \{ \mathcal I'_{\delta} : \delta \in \{0, 1, 2\}^{2m-2}\} : C' \cap A \neq \emptyset\}| \leq 3^{2m-2} \text.
    \end{equation}

    At the same time, for each $v\in \Sigma(G)$, if $\delta \in \{0, 1, 2\}^{2m-2}$ is such that $U(v, r) \subseteq C$ for some $C\in\mathcal I_\delta$ (see \cref{it:decomposition_ball_inclusion}), then also $v\in C'$. This shows that $\bigcup_{\delta \in \{0, 1, 2\}^{2m-2}} \mathcal I'_{\delta}$ is a cover of $\Sigma (G)$.

    Therefore, for any $r > 0$, we have constructed a cover $\mathcal C$ which satisfies \cref{it:nagata_1,it:nagata_2,it:nagata_3} with $s=r/2$, $d = 3^{2m-2} - 1$ (see \eqref{eq:intersection}), and $\gamma \lesssim m$, thus proving the claim.
\end{proof}

As a consequence, we obtain the following extension result for subset of metric graphs.

\begin{thm}\label{thm:ae_p_graphs}
    If $\Sigma(G)$ is a metric graph induced by a countable and connected weighted graph $G$ which excludes the complete graph $K_m$ as a minor, then $\mathfrak{ae}_p (S) \lesssim_p m^2 \cdot 9^{m(1/p - 1)}$ for any subset $S\subseteq \Sigma(G)$ and any $0<p\leq 1$.
\end{thm}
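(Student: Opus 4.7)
The proof should be a short composition of the two main structural results already assembled in the paper: \Cref{prop:nagata_minor_excluded} bounding the Nagata dimension of a minor-excluded metric graph, and \Cref{thm:ae_p_nagata} converting a Nagata-dimension bound on a subset into an absolute extendability estimate. The only conceptual ingredient beyond a direct substitution is the hereditary property of Nagata dimension noted just before \Cref{cor:ae_p_trees}: if $\mathcal N$ has Nagata dimension at most $d$ with constant $\gamma$, then so does every subspace, since restricting a Nagata cover $\mathcal C$ of $\mathcal N$ to an arbitrary $S\subseteq\mathcal N$ (by replacing each $C\in\mathcal C$ with $C\cap S$) preserves properties \cref{it:nagata_1,it:nagata_2,it:nagata_3}.

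The plan is then as follows. First, I invoke \Cref{prop:nagata_minor_excluded} on the ambient space $\Sigma(G)$ to obtain that $\Sigma(G)$ has Nagata dimension at most $d \coloneqq 3^{2m-2}-1$ with some constant $\gamma$ satisfying $\gamma\lesssim m$. Next, by the hereditary property recalled above, the subset $S\subseteq\Sigma(G)$ inherits the same bounds: Nagata dimension at most $d=3^{2m-2}-1$ with constant $\gamma\lesssim m$. Finally, I feed these values into \Cref{thm:ae_p_nagata}, which yields
\begin{equation*}
    \mathfrak{ae}_p(S)\;\lesssim_p\; \gamma\cdot (d+1)^{1/p-1}\cdot\log(d+2) \;\lesssim_p\; m\cdot \bigl(3^{2m-2}\bigr)^{1/p-1}\cdot \log\bigl(3^{2m-2}+1\bigr).
\end{equation*}

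It remains to simplify the right-hand side into the advertised form $m^2\cdot 9^{m(1/p-1)}$. The logarithmic factor contributes $\log(3^{2m-2}+1)\lesssim m$, absorbing into the leading $m$ to produce $m^2$. The exponential factor is $(3^{2m-2})^{1/p-1} = 9^{(m-1)(1/p-1)} \leq 9^{m(1/p-1)}$, since $1/p-1\geq 0$ for $0<p\leq 1$. Combining the three pieces gives $\mathfrak{ae}_p(S)\lesssim_p m^2\cdot 9^{m(1/p-1)}$, completing the argument.

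There is no genuine obstacle — all the hard work has already been done in \Cref{prop:nagata_minor_excluded} (the graph-theoretic decomposition relying on \Cref{lemma:cluster_distances}) and in \Cref{thm:ae_p_nagata} (the Whitney-cover construction of \Cref{sec:whitney_covers}). The only thing to be careful about is writing down the hereditary property of Nagata dimension explicitly for the subset $S$, and checking the arithmetic when converting $(3^{2m-2})^{1/p-1}\log(3^{2m-2}+1)$ into the cleaner $9^{m(1/p-1)}\cdot m$.
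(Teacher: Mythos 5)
Your proposal is correct and matches the paper's intended argument exactly: the theorem is stated as an immediate consequence of \Cref{prop:nagata_minor_excluded} combined with \Cref{thm:ae_p_nagata}, using the hereditary property of Nagata dimension for the subset $S$ and the same arithmetic simplification $(3^{2m-2})^{1/p-1}\log(3^{2m-2}+1)\lesssim m\cdot 9^{m(1/p-1)}$. Nothing further is needed.
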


\section{Lipschitz Extensions and Lipschitz Free Spaces}\label{sec:lip_free_spaces}

Recall that in the definition of $p$-trace and of absolute $p$-extendability, we considered the existence of Lipschitz extensions for each Lipschitz map $f : \mathcal N \to Z$ that ranges into any $p$-Banach space $Z$. Nevertheless, we can develop these notions for a single, canonical choice of $Z$ along with the map $f$. This is facilitated by a universal object associated with $\mathcal N$, referred to as the \emph{Lipschitz free $p$-space over $\mathcal N$}.

\begin{thm}[{cf.~\cite[Theorem~4.5]{Albiac2020}}]
    Let $(\mathcal N, \rho)$ be a~pointed metric space. Given $0<p\leq 1$, there exists a $p$-Banach space $(\mathcal F_p (\mathcal N), \lVert\cdot\rVert)$, called the~\emph{Lipschitz free $p$-space over $\mathcal N$}, and a~map $\delta_{\mathcal N} : \mathcal N \to \mathcal F_p (\mathcal N)$ such that
    \begin{props}[label=(\roman*), ref=\roman*]
        \item\label{it:F_p_isometry} $\delta_{\mathcal N}$ is an~isometric embedding with $\delta_{\mathcal N}(0_{\mathcal N})=0_{\mathcal F_p(\mathcal N)}$,
        \item\label{it:F_p_dense} $\mathcal F_p (\mathcal N) = \overline{\operatorname{span}} \{ \delta_{\mathcal N}(x) : x\in \mathcal N \}$,
        \item\label{it:F_p_extension} for any Lipschitz map $f\in\Lip_0 (\mathcal N, Y)$, where $Y$ is a $p$-Banach space, there is a unique bounded linear operator $L_f : \mathcal F_p(\mathcal N) \to Y$ such that $L_f \circ \delta_{\mathcal N} = f$. This operator is called the \emph{canonical linearization of $f$}. Moreover, it satisfies that $\lVert L_f \rVert = \Lip f$.
    \end{props}
\end{thm}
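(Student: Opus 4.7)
The plan is to construct $\mathcal F_p(\mathcal N)$ by the standard molecule construction, adapted to the $p$-Banach setting. Let $V_0$ be the vector space of finitely supported real-valued functions on $\mathcal N$ whose values sum to zero, set $\delta_{\mathcal N}(x) = \mathbf{1}_{\{x\}} - \mathbf{1}_{\{0_{\mathcal N}\}}$ so that $V_0 = \operatorname{span}\{\delta_{\mathcal N}(x) : x\in\mathcal N\}$, and for $\mu\in V_0$ define
\[
\lvert \mu \rvert_p = \inf\left\{\Bigl(\sum_{i=1}^n |a_i|^p \rho(x_i, y_i)^p\Bigr)^{1/p} : \mu = \sum_{i=1}^n a_i(\mathbf{1}_{\{x_i\}} - \mathbf{1}_{\{y_i\}})\right\}.
\]
Homogeneity of $\lvert\cdot\rvert_p$ is immediate, and $p$-subadditivity follows by concatenating near-optimal representations of $\mu$ and $\nu$ and using subadditivity of $t\mapsto t^p$. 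I would then define $\mathcal F_p(\mathcal N)$ as the completion of $V_0$ with respect to the translation-invariant metric $(u,v)\mapsto \lvert u - v\rvert_p^p$ (after quotienting out the kernel of $\lvert\cdot\rvert_p$; the next step shows this kernel is trivial, so no quotient is actually needed).

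The main obstacle is the isometry property \emph{(i)}, specifically the lower bound $\lvert \delta_{\mathcal N}(x) - \delta_{\mathcal N}(y)\rvert_p \geq \rho(x,y)$, since the upper bound is witnessed by the trivial representation. I would separate molecules via scalar Lipschitz functionals: to each $g\in\Lip_0(\mathcal N,\mathbb R)$ assign the linear $\ell_g:V_0\to\mathbb R$ with $\ell_g(\mathbf{1}_{\{z\}}-\mathbf{1}_{\{w\}}) = g(z)-g(w)$. For any molecule representation of $\mu$, the elementary inequality $\sum |b_i| \leq \bigl(\sum |b_i|^p\bigr)^{1/p}$ valid for $0<p\leq 1$, applied to $b_i = a_i(g(x_i)-g(y_i))$ together with $|g(x_i)-g(y_i)| \leq \Lip(g)\rho(x_i,y_i)$, yields $|\ell_g(\mu)| \leq \Lip(g)\cdot\lvert\mu\rvert_p$. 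Applying this to $g_y(z) = \rho(z,y) - \rho(0_{\mathcal N},y)$, which satisfies $\Lip(g_y)=1$ and $\ell_{g_y}(\delta_{\mathcal N}(x)-\delta_{\mathcal N}(y)) = \rho(x,y)$, gives the matching lower bound, and $\delta_{\mathcal N}(0_{\mathcal N})=0$ is by construction.

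Property \emph{(ii)} is then immediate from density of $V_0$ in the completion. For \emph{(iii)}, given $f\in\Lip_0(\mathcal N,Y)$ with $Y$ a $p$-Banach space, define $L_f$ linearly on $V_0$ by $L_f(\mathbf{1}_{\{z\}}-\mathbf{1}_{\{w\}}) = f(z)-f(w)$; this is well-defined on $V_0$ since $f(0_{\mathcal N})=0$ and $V_0$ is presented by the sum-to-zero relation. Applying the $p$-triangle inequality in $Y$ to any representation $\mu = \sum a_i(\mathbf{1}_{\{x_i\}}-\mathbf{1}_{\{y_i\}})$ yields
\[
\lVert L_f(\mu)\rVert_Y^p \leq \sum_i |a_i|^p\lVert f(x_i)-f(y_i)\rVert_Y^p \leq \Lip(f)^p \sum_i |a_i|^p \rho(x_i,y_i)^p,
\]
so $\lVert L_f(\mu)\rVert_Y \leq \Lip(f)\cdot\lvert\mu\rvert_p$. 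By completeness of $Y$, $L_f$ extends uniquely to a bounded operator $\mathcal F_p(\mathcal N)\to Y$ with $\lVert L_f\rVert \leq \Lip(f)$; the reverse inequality, and hence $\lVert L_f\rVert = \Lip(f)$, follows by restricting back to $\delta_{\mathcal N}(\mathcal N)$ and using \emph{(i)}, while uniqueness follows from \emph{(ii)}.
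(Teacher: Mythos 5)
Your proposal is correct and follows essentially the same route as the source the paper cites for this theorem (the paper itself offers no proof, only the reference \cite{Albiac2020}): the molecule space with the infimum $p$-norm, the lower bound for the isometry via scalar functionals $\ell_g$ with $g\in\Lip_0(\mathcal N,\mathbb R)$ and the inequality $\sum|b_i|\leq(\sum|b_i|^p)^{1/p}$, and the linearization $L_f$ bounded by $\Lip f$ through the $p$-triangle inequality and extended by density. The only point worth making explicit is that the triviality of the kernel of $\lvert\cdot\rvert_p$ needs the same separation argument applied to an arbitrary molecule (choosing $g$ supported near a single point of its finite support), not just to differences $\delta_{\mathcal N}(x)-\delta_{\mathcal N}(y)$, but this is an immediate extension of what you wrote.
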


A distinctive feature of Lipschitz free spaces is that they relate the classical linear theory to the non-linear geometry of Banach spaces. This was originally observed in the seminal paper by \textcite{Godefroy2003}, and subsequently pursued in works including \cite{Albiac2022, Borel2012, Kalton2012}, to name a few.

In particular, the problem about the $p$-trace of $\mathcal N$ in $\mathcal M$, a nonlinear phenomenon, is equivalent to the problem about existence of a specific linear operator between the Lipschitz free $p$-spaces $\mathcal F_p (\mathcal M)$ and $\mathcal F_p(\mathcal N)$. We note that the connection between the Lipschitz extension problem and the theory of Lipschitz free spaces was recently explored by \textcite{Albiac2021sums}.

\begin{defn}[{cf.~\cite[Definition 2.6]{Albiac2021sums}}]\label{def:cpa}
    If $\mathcal N$ is a subspace of a pointed metric space $\mathcal M$ and if $0<p\leq 1$, we say $\mathcal N$ is \emph{complementably $p$-amenable in $\mathcal M$ with constant $C<\infty$} provided there exists a bounded operator $T:\mathcal F_p (\mathcal M)\to \mathcal F_p (\mathcal N)$ satisfying $T\circ L_i= \operatorname{Id}_{\mathcal F_p (\mathcal N)}$ and $\lVert T\rVert \leq C$. Here, $L_i$ is the canonical linearization of the inclusion $i:\mathcal N \to \mathcal M$. In other words, $L_i$ is the unique linear map $L_i: \mathcal F_p (\mathcal N) \to \mathcal F_p (\mathcal M)$ such that for each $x\in\mathcal N$, we have $L_i(\delta_{\mathcal N}(x)) = \delta_{\mathcal M} (x)$.
\end{defn}

The exact relation between the constant of complementable $p$-amenability and the $p$-trace of $\mathcal N$ in $\mathcal M$ is described below.

\begin{prop}\label{prop:cpa_characterization}
    Let $\mathcal N$ be a subspace of a pointed metric space $\mathcal M$. Then, for each $0<p\leq 1$ and $0<C < \infty$, the following are equivalent:
    \begin{enumerate}[label=(\roman*), ref=\roman*]
        \item\label{it:cpa_1} $\mathcal N$ is complementably $p$-amenable in $\mathcal M$ with constant less than $C$,
        \item\label{it:cpa_2} the inclusion map $i: \mathcal N \to \mathcal F_p (\mathcal N)$ extends to a map $i': \mathcal M \to \mathcal F_p (\mathcal N)$ such that $\Lip i' < C$,
        \item\label{it:cpa_3} the $p$-trace of $\mathcal N$ in $\mathcal M$ is less than $C$, i.e., $\mathfrak{t}_p (\mathcal N, \mathcal M) < C$.
    \end{enumerate}
\end{prop}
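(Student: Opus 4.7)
The plan is to prove the cycle $(i) \Rightarrow (ii) \Rightarrow (iii) \Rightarrow (i)$. All three implications are essentially diagram chases exploiting the universal property of the Lipschitz free $p$-space: any $f \in \Lip_0(\mathcal N, Y)$ into a $p$-Banach space $Y$ admits a unique canonical linearization $L_f : \mathcal F_p(\mathcal N) \to Y$ with $\|L_f\| = \Lip f$, while $\delta_{\mathcal N}$ and $\delta_{\mathcal M}$ are isometric embeddings.

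For $(i) \Rightarrow (ii)$, I would take the given operator $T : \mathcal F_p(\mathcal M) \to \mathcal F_p(\mathcal N)$ with $\|T\| < C$ and $T \circ L_i = \operatorname{Id}$, and set $i' := T \circ \delta_{\mathcal M}$. Using the defining property $L_i \circ \delta_{\mathcal N} = \delta_{\mathcal M}|_{\mathcal N}$, a one-line calculation shows $i'|_{\mathcal N} = \delta_{\mathcal N}$, while $\Lip i' \le \|T\| < C$ since $\delta_{\mathcal M}$ is $1$-Lipschitz. For $(ii) \Rightarrow (iii)$, given any Lipschitz $f : \mathcal N \to Z$ into an arbitrary $p$-Banach space $Z$ (reducing to $f(0_{\mathcal N}) = 0$ by translating $Z$), I would form the canonical linearization $L_f$ and define $\widetilde f := L_f \circ i' : \mathcal M \to Z$. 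Then $\widetilde f$ extends $f$ because $i'|_{\mathcal N} = \delta_{\mathcal N}$ and $L_f \circ \delta_{\mathcal N} = f$, while $\Lip \widetilde f \le \|L_f\| \cdot \Lip i' < C \cdot \Lip f$, yielding $\mathfrak{t}_p(\mathcal N, \mathcal M) < C$.

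For the closing implication $(iii) \Rightarrow (i)$, the idea is to specialize the hypothesis to the tautological Lipschitz map $\delta_{\mathcal N} : \mathcal N \to \mathcal F_p(\mathcal N)$ of Lipschitz constant one: by $(iii)$, it extends to some $i' : \mathcal M \to \mathcal F_p(\mathcal N)$ with $\Lip i' < C$, and I would set $T := L_{i'}$, the canonical linearization of $i'$. Then $\|T\| = \Lip i' < C$, and the relation $T \circ L_i = \operatorname{Id}_{\mathcal F_p(\mathcal N)}$ reduces to showing equality on the generating set $\{\delta_{\mathcal N}(x) : x \in \mathcal N\}$, which is immediate from $T(L_i(\delta_{\mathcal N}(x))) = L_{i'}(\delta_{\mathcal M}(x)) = i'(x) = \delta_{\mathcal N}(x)$; density of the linear span of this generating set in $\mathcal F_p(\mathcal N)$ together with continuity of bounded linear operators then propagates the equality to the whole space. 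I do not anticipate any substantial obstacle, the whole argument being bookkeeping around the universal property; the only care needed is to respect the paper's convention that the operator norm of a bounded linear map between $p$-Banach spaces coincides with its Lipschitz constant, and to handle base points so that every invoked canonical linearization is well-defined (which the standard translation trick in the target handles).
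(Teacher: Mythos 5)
Your proposal is correct and follows essentially the same route as the paper: $i' = T\circ\delta_{\mathcal M}$ for (i)$\Rightarrow$(ii), $L_f\circ i'$ for (ii)$\Rightarrow$(iii), and linearizing the extension of the canonical embedding $\delta_{\mathcal N}$ to close the loop; your (iii)$\Rightarrow$(i) merely merges the paper's trivial (iii)$\Rightarrow$(ii) step with its linearization argument, with the density-of-span verification made explicit.
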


\begin{proof}
    In order to prove that \cref{it:cpa_1} implies \cref{it:cpa_2}, observe that if $T:\mathcal F_p (\mathcal M)\to\mathcal F_p(\mathcal N)$ is the bounded operator from the definition of complementable $p$-amenability with constant $0<C'<C$, we can define $i'=T\circ \delta_{\mathcal M}$, where $\Lip i' \leq \lVert T\rVert\leq C' < C$ because $\delta_{\mathcal M}$ is an isometry. 

    Also, \cref{it:cpa_2} implies \cref{it:cpa_3}, as the canonical linearization $L_{i'} : \mathcal F_p (\mathcal M) \to \mathcal F_p (\mathcal N)$ of $i'$, given by \cref{it:F_p_extension} of the Lipschitz free $p$-space $\mathcal F_p (\mathcal M)$, satisfies $L_{i'} \circ L_i = \operatorname{Id}_{\mathcal F_p (\mathcal N)}$ with $\lVert L_{i'} \rVert < C$.

    To show that \cref{it:cpa_2} implies \cref{it:cpa_3}, note that for any $p$-Banach space $Z$ and a Lipschitz map $f: \mathcal N \to Z$, if $L_f : \mathcal F_p (\mathcal N) \to Z$ is the canonical linearization of $f$ given by \cref{it:F_p_extension} of the Lipschitz free $p$-space $\mathcal F_p (\mathcal N)$, we may take $f' = L_f \circ i'$. From this, it follows that $\Lip f' \leq \Lip i' \cdot \Lip f$ and, consequently, $\mathfrak{t}_p (\mathcal N, \mathcal M) \leq \Lip i' < C$. 

    Moreover, \cref{it:cpa_3} clearly implies \cref{it:cpa_2}, thereby proving the claim.
\end{proof}

Let us remark that in \cite{Albiac2021sums}, numerous results, related solely to the structural properties of Lipschitz free $p$-spaces over doubling metrics, were derived from the absolute $p$-extendability of this class of metrics. Having generalized these extendability results to the class of spaces with finite Nagata dimension in \Cref{sec:nagata_covers}, we can restate some of these structural results accordingly. For instance, consider the following.

\begin{cor}[{\emph{for spaces with finite Nagata dimension}; cf.~\cite[Corollary 5.3]{Albiac2021sums}}]
    If $\mathcal M$ is a metric space with finite \emph{Nagata dimension}, then there exists a net $(T_i)_{i\in\mathcal I}$ of finite-rank projections on $\mathcal F_p (\mathcal M)$, uniformly bounded in norm, that converges uniformly to the identity map on compact sets. In particular, $\mathcal F_p (\mathcal M)$ has the \emph{$\pi$-property}.
\end{cor}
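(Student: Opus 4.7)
The plan is to mimic the proof of \cite[Corollary~5.3]{Albiac2021sums} verbatim, replacing its input (the doubling extension estimate) with our Nagata dimension extension estimate from \Cref{thm:ae_p_nagata}, and translating between nonlinear extensions and bounded linear operators through \Cref{prop:cpa_characterization}.

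First, I would fix a basepoint $0\in\mathcal M$ and index the sought net by the directed set $\mathcal I$ of all finite pointed subsets $N\subseteq\mathcal M$ containing $0$, ordered by inclusion. For every such $N$, the inherited Nagata dimension of the subspace $N$ is still at most $d$ with constant $\gamma$, so \Cref{thm:ae_p_nagata} yields $\mathfrak{t}_p(N,\mathcal M)\leq \mathfrak{ae}_p(N)\leq K$, where $K=K(p,d,\gamma)$ is a constant that depends only on $p$, the Nagata dimension, and its constant, and crucially \emph{not} on $N$.

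Next, I would apply \Cref{prop:cpa_characterization} to each $N\in\mathcal I$ to obtain a bounded linear operator $T_N:\mathcal F_p(\mathcal M)\to\mathcal F_p(N)$ satisfying $T_N\circ L_{i_N}=\operatorname{Id}_{\mathcal F_p(N)}$ and $\lVert T_N\rVert\leq K$. Setting $P_N=L_{i_N}\circ T_N:\mathcal F_p(\mathcal M)\to\mathcal F_p(\mathcal M)$, a direct calculation gives $P_N^2=L_{i_N}(T_N L_{i_N})T_N=L_{i_N}T_N=P_N$, and $P_N$ has finite rank since $\mathcal F_p(N)$ is finite-dimensional (by \cref{it:F_p_dense}). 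Moreover $\lVert P_N\rVert\leq \lVert L_{i_N}\rVert\cdot\lVert T_N\rVert\leq K$, since $L_{i_N}$ is a non-expansion (as $i_N$ itself is). For any $x\in N$ we have $P_N\delta_{\mathcal M}(x)=L_{i_N}T_N(L_{i_N}\delta_N(x))=L_{i_N}\delta_N(x)=\delta_{\mathcal M}(x)$, so $(P_N)_{N\in\mathcal I}$ converges to the identity on every element of the dense set $\{\delta_{\mathcal M}(x):x\in\mathcal M\}$, and hence on its linear span.

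Finally, I would upgrade the pointwise convergence on a dense subset to uniform convergence on compact subsets of $\mathcal F_p(\mathcal M)$. Given $u\in\mathcal F_p(\mathcal M)$ and $\varepsilon>0$, pick $v$ in the dense span with $\lVert u-v\rVert<\varepsilon$; then for any $N$,
\begin{equation*}
\lVert P_N u - u\rVert^p \leq \lVert P_N(u-v)\rVert^p + \lVert P_N v - v\rVert^p + \lVert v-u\rVert^p \leq (K^p+1)\varepsilon^p + \lVert P_N v - v\rVert^p\text,
\end{equation*}
so pointwise convergence propagates to all of $\mathcal F_p(\mathcal M)$. Replacing $u$ by an $\varepsilon$-net in a fixed compact set $Q\subset\mathcal F_p(\mathcal M)$ and combining this estimate with $\sup_N\lVert P_N\rVert\leq K$ in the same three-term $p$-triangle inequality yields uniform convergence on $Q$. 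The $\pi$-property is then immediate from the existence of such a net. The only non-cosmetic point is the last step, where one must be careful that the uniform-boundedness/equicontinuity argument works in the $p$-Banach setting, but this is a straightforward use of the $p$-triangle inequality as displayed above.
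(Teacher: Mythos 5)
Your proposal is correct and matches the route the paper has in mind. The paper does not spell out a proof here; it simply remarks, immediately after the corollary, that ``the same proof as in [Albiac2021sums, Corollary~5.3] is applicable,'' with the doubling estimate replaced by the Nagata-dimension estimate of \Cref{thm:ae_p_nagata}. Your reconstruction is exactly that replacement carried out in detail: you index the net by finite pointed subsets $N\ni 0$ ordered by inclusion, use the (stated) hereditary nature of Nagata dimension to get a bound on $\mathfrak{ae}_p(N)$ uniform over all finite $N$, convert each extension to a retraction $T_N\colon\mathcal F_p(\mathcal M)\to\mathcal F_p(N)$ via \Cref{prop:cpa_characterization}, and set $P_N=L_{i_N}\circ T_N$. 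The algebraic verification that $P_N$ is a projection, the rank bound, the norm bound $\lVert P_N\rVert\leq\lVert L_{i_N}\rVert\lVert T_N\rVert\leq K$, the pointwise identity $P_N\delta_{\mathcal M}(x)=\delta_{\mathcal M}(x)$ for $x\in N$, and the upgrade from pointwise convergence on the dense span to uniform convergence on compacts via the $p$-triangle inequality and the uniform bound $\sup_N\lVert P_N\rVert\leq K$ are all correct and standard. The one point you rightly flag — that equicontinuity plus density gives uniform convergence on compacts even in the $p$-normed setting — is handled correctly by the three-term $p$-inequality you display.
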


We note that the same proof as in \cite[Corollary 5.3]{Albiac2021sums} is applicable here. For the proof of the following corollary, in contrast to its counterpart in doubling spaces, we additionally assume that $\mathcal M$ is a \emph{proper metric space}. That is, all closed, bounded subspaces of $\mathcal M$ are compact. Observe that not all countable spaces of finite Nagata dimension are proper. For example, any countably infinite set $\mathcal M$ equipped with the discrete metric $\rho(x, y) = 1$ if, and only if, $x,\,y\in\mathcal M$ and $x\neq y$, is not proper, although its Nagata dimension is 0.

\begin{cor}[{\emph{for spaces with finite Nagata dimension}; cf.~\cite[Corollary 5.4]{Albiac2021sums}}]
    If $\mathcal M$ is a complete countable \emph{proper} metric space with finite Nagata dimension, then $\mathcal F_1(\mathcal M)$ has the \emph{finite dimensional decomposition property}. In particular, it has the \emph{metric approximation property}, after suitable renorming (for the definition of approximation properties, see \cite{Casazza2001}).
\end{cor}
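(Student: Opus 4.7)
The plan is to replicate the proof of \cite[Corollary~5.4]{Albiac2021sums} with our Theorem~\ref{thm:ae_p_nagata} substituted for the extension result used there in the doubling setting. The only property of doubling metric spaces that is actually invoked in that proof is a uniform bound on the $1$-extendability constant of each (finite) subspace; since Nagata dimension at most $d$ with constant $\gamma$ is inherited by subsets, Theorem~\ref{thm:ae_p_nagata} supplies such a uniform bound $K = K(d, \gamma)$, that is, $\mathfrak{t}_1(S, \mathcal M) \leq K$ for every $S \subseteq \mathcal M$.

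Concretely, enumerate $\mathcal M = \{x_n\}_{n=0}^\infty$ with $x_0$ the distinguished base point, set $\mathcal M_n = \{x_0, \ldots, x_n\}$, and apply \Cref{prop:cpa_characterization} to obtain linear operators $S_n : \mathcal F_1(\mathcal M) \to \mathcal F_1(\mathcal M_n)$ satisfying $S_n \circ L_{i_n} = \operatorname{Id}_{\mathcal F_1(\mathcal M_n)}$ with $\lVert S_n \rVert \leq K$. Setting $P_n := L_{i_n} \circ S_n$ produces a uniformly bounded sequence of finite-rank projections on $\mathcal F_1(\mathcal M)$, with $\operatorname{Range}(P_n)$ of dimension $n$.

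To upgrade from the $\pi$-property to an FDD one needs to choose the $S_n$ so that $P_n P_m = P_{\min(n,m)}$. This is achieved by an inductive construction: given $S_n$, extend the Lipschitz map $L_{i_{n+1}} \circ S_n \circ L_{i_n}^{-1} : L_{i_n}(\delta_{\mathcal M_n}(\mathcal M_n)) \to \mathcal F_1(\mathcal M_{n+1})$ first from $\mathcal M_n$ to $\mathcal M_{n+1}$ using Theorem~\ref{thm:ae_p_nagata} applied to $\mathcal M_n \subseteq \mathcal M_{n+1}$, then from $\mathcal M_{n+1}$ to $\mathcal M$, paying a factor of $K$ at each step, and take the canonical linearisation to define $S_{n+1}$. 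Properness of $\mathcal M$ enters only at the convergence step: since $\mathcal M = \bigcup_n \mathcal M_n$ and bounded subsets of $\mathcal M$ are compact, $P_n \delta_{\mathcal M}(x) = \delta_{\mathcal M}(x)$ for every $x \in \mathcal M_n$, and uniform boundedness $\sup_n \lVert P_n \rVert < \infty$ together with the density of $\operatorname{span}\{\delta_{\mathcal M}(x) : x\in \mathcal M\}$ in $\mathcal F_1(\mathcal M)$ yields strong convergence $P_n \to \operatorname{Id}$ on all of $\mathcal F_1(\mathcal M)$.

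The main obstacle is the inductive compatibility of the $S_n$ giving the nesting $P_n P_m = P_{\min(n,m)}$; the convergence, in contrast, is essentially an immediate consequence of properness, which prevents mass from escaping to infinity. The final clause on the metric approximation property after renorming is a standard consequence of an FDD with uniformly bounded constants, and we refer to \cite{Casazza2001} for the routine Banach-space argument.
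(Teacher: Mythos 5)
Your top-level plan — rerun the proof of \cite[Corollary~5.4]{Albiac2021sums} with \Cref{thm:ae_p_nagata} (plus heredity of Nagata dimension and \Cref{prop:cpa_characterization}) supplying the uniform bound on $\mathfrak{t}_1(S,\mathcal M)$ — is exactly what the paper intends; the paper offers no further argument beyond this citation and the remark that properness must now be assumed. However, the way you flesh out the two non-trivial points contains genuine gaps. First, the compatibility step. The easy half of commutation is automatic: for $m\leq n$ one has $S_nL_{i_m}=L_{j_{m,n}}$ (the linearized inclusion $\mathcal M_m\to\mathcal M_n$), hence $P_nP_m=P_m$. The hard half is $P_mP_n=P_m$, i.e.\ that re-extending the restriction of an already extended map reproduces it, and your inductive recipe does not deliver it. As written, $L_{i_{n+1}}\circ S_n\circ L_{i_n}^{-1}$ does not even compose ($S_n$ is defined on $\mathcal F_1(\mathcal M)$, not on $\mathcal F_1(\mathcal M_n)$), and more importantly the map you extend is prescribed only on $\mathcal M_n$, so its extension need not agree with $\delta_{\mathcal M_{n+1}}$ on $\mathcal M_{n+1}\setminus\mathcal M_n$; then $S_{n+1}L_{i_{n+1}}\neq\operatorname{Id}$ and $P_{n+1}$ is not a projection onto $L_{i_{n+1}}(\mathcal F_1(\mathcal M_{n+1}))$. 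If one repairs this by genuinely composing extension operators level by level, the constants multiply — you yourself write ``paying a factor of $K$ at each step'' — giving $\lVert S_n\rVert\lesssim K^{n}$, and without a uniform bound there is no FDD. So the central difficulty is left unresolved.

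Second, the role of properness. Strong convergence $P_n\to\operatorname{Id}$ requires no compactness at all: $P_n$ fixes $\delta_{\mathcal M}(\mathcal M_n)$, the $P_n$ are uniformly bounded, and $\operatorname{span}\{\delta_{\mathcal M}(x):x\in\mathcal M\}$ is dense, which already suffices since $\mathcal M=\bigcup_n\mathcal M_n$ with $\mathcal M_n$ finite (countability alone). If properness were needed only there, it would be a superfluous hypothesis — but the paper explicitly stresses (via the infinite discrete-metric example) that properness is an additional, essential assumption in the Nagata setting, precisely because the argument of \cite[Corollary~5.4]{Albiac2021sums} uses it inside the construction of the compatible, uniformly bounded projections (the step you leave open), not in the routine limit. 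So your sketch both misses the idea that makes the projections commute with a uniform bound and misattributes where properness is actually consumed.
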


Lastly, we would like to highlight the following important application of the absolute $p$-extendability result for subsets of metric trees (refer to \Cref{cor:ae_p_trees}).

\begin{thm}[{cf.~\cite[Theorem 3.21]{Cuth2023}}]
    If $\mathcal N \subset \mathcal M$ are metric spaces in inclusion and $0<p\leq 1$, then the canonical linearization $T_i : \mathcal F_p (\mathcal N) \to \mathcal F_p (\mathcal M)$ of the inclusion $i: \mathcal N \to \mathcal M$ is an isomorphism. Moreover, $\lVert T^{-1}_i \rVert \lesssim_p 1$.
\end{thm}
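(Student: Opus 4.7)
The plan is to translate the isomorphism statement about free spaces into a Lipschitz extension problem and then reduce that extension problem to a setting governed by \Cref{cor:ae_p_trees}. By \Cref{prop:cpa_characterization}, showing $\lVert T_i^{-1}\rVert\lesssim_p 1$ is equivalent to establishing the uniform trace bound $\mathfrak{t}_p(\mathcal N,\mathcal M)\lesssim_p 1$, with a constant depending only on $p$ and independent of the pair $\mathcal N\subseteq\mathcal M$. Unraveling the definition, the goal becomes: for every $p$-Banach space $Z$ and every Lipschitz map $f\colon\mathcal N\to Z$, produce an extension $\tilde f\colon\mathcal M\to Z$ with $\Lip\tilde f\le C(p)\cdot \Lip f$, where $C(p)$ depends only on $p$.

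To implement this, my plan is to realize the pair $\mathcal N\subseteq\mathcal M$ inside a metric tree. Concretely, I would attempt to construct a countable weighted acyclic graph $G$ together with an isometric embedding $\iota\colon\mathcal M\to\Sigma(G)$ such that $\iota(\mathcal N)$ lies in $\iota(\mathcal M)\subseteq\Sigma(G)$. Applying \Cref{cor:ae_p_trees} to the inclusion $\iota(\mathcal N)\subseteq\Sigma(G)$ would yield $\mathfrak{ae}_p(\iota(\mathcal N))\lesssim_p 1$, and in particular a Lipschitz extension of $f\circ\iota^{-1}|_{\iota(\mathcal N)}$ from $\iota(\mathcal N)$ to the whole of $\Sigma(G)$ with Lipschitz constant at most $C(p)\cdot \Lip f$. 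Restricting this extension back to $\iota(\mathcal M)$ and composing with $\iota$ would then give the required $\tilde f$, with Lipschitz constant controlled by the absolute $p$-extendability constant of a metric tree, which is $\lesssim_p 1$.

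The genuine obstacle is that a general metric space $\mathcal M$ does not embed isometrically into any $\mathbb R$-tree, since subsets of trees must obey Gromov's four-point condition. I therefore expect the actual argument of \cite{Cuth2023} to bypass the naive tree embedding by passing to finite approximations: every finite configuration $F\subseteq\mathcal M$ containing a selected basepoint can be realized, after introducing Steiner vertices, inside a weighted tree $T_F$ in which three-point subsets are handled by the standard tripod construction and in which \Cref{cor:ae_p_trees} produces a Lipschitz extension with a constant that is uniform in $F$. The delicate part is to pass to the limit as $F$ exhausts $\mathcal M$ so as to produce a single extension defined on all of $\mathcal M$ without losing control of the Lipschitz constant. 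Since the usual Hahn--Banach--type extension tools are not available in the $p$-Banach category, the limit step will have to rely either on an ultraproduct argument for $p$-Banach spaces or on a direct compactness argument in a weak topology on $\mathcal F_p(\mathcal N)$. The principal technical hurdle, in my opinion, is to arrange the approximating trees $T_F$ coherently so that the accumulation factor $C(p,n)=n^{1/p-1}$ coming from the quasi-triangle inequality does not propagate with the size of the approximation, thereby preserving the $p$-dependent universal constant inherited from \Cref{cor:ae_p_trees}.
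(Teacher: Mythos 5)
Your opening step inverts an implication and commits you to proving a false statement. \Cref{prop:cpa_characterization} identifies $\mathfrak{t}_p(\mathcal N,\mathcal M)<C$ with complementable $p$-amenability of $\mathcal N$ in $\mathcal M$ with constant less than $C$, that is, with the existence of a bounded left inverse $T\colon\mathcal F_p(\mathcal M)\to\mathcal F_p(\mathcal N)$ of $L_i$ with $\|T\|<C$. The existence of such a $T$ implies that $T_i=L_i$ is bounded below and hence an isomorphism onto its range with $\|T_i^{-1}\|<C$, but the converse is false: a bounded-below operator between quasi-Banach spaces need not have complemented range. So $\|T_i^{-1}\|\lesssim_p 1$ is strictly weaker than $\mathfrak{t}_p(\mathcal N,\mathcal M)\lesssim_p 1$, and the latter is simply not true for arbitrary pairs $\mathcal N\subseteq\mathcal M$: even for $p=1$ the absolute extendability constant $\mathfrak{ae}_1(n)$ tends to infinity with $n$, which is the very reason the estimates of \Cref{thm:ae_p_nagata} and \Cref{cor:ae_p_doubling} have to depend on the dimension or doubling constant of $\mathcal N$. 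Your plan of producing, for every Lipschitz $f\colon\mathcal N\to Z$, an extension $\tilde f\colon\mathcal M\to Z$ with a $p$-universal Lipschitz constant therefore cannot be carried out, and no amount of care with the tree embedding or the limiting step will repair this.

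The paper does not give a proof of this theorem (it only cites \cite{Cuth2023}), but it does describe the argument in one line: $\|T_i^{-1}\|$ is bounded by the absolute $p$-extendability constant of a metric tree. That statement signals a \emph{per-molecule} lower bound on $T_i$, not a global extension problem. Fix a finitely supported $\mu\in\mathcal F_p(\mathcal N)$ with support $\mathcal N'$ and take a near-optimal molecular representation of $T_i\mu$ in $\mathcal F_p(\mathcal M)$; it uses only finitely many points of $\mathcal M$, and after pruning cycles the edges of the representation define a weighted tree $T$ with $\mathcal N'\subseteq\Sigma(T)$ such that $\|\mu\|_{\mathcal F_p(\Sigma(T))}$ is controlled by $\|T_i\mu\|_{\mathcal F_p(\mathcal M)}$. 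Then \Cref{cor:ae_p_trees} together with \Cref{prop:cpa_characterization}, applied to the finite pair $(\mathcal N',\Sigma(T))$ rather than to $(\mathcal N,\mathcal M)$, yields $\|\mu\|_{\mathcal F_p(\mathcal N')}\lesssim_p\|\mu\|_{\mathcal F_p(\Sigma(T))}$, and since the canonical operator $\mathcal F_p(\mathcal N')\to\mathcal F_p(\mathcal N)$ has norm one, $\|\mu\|_{\mathcal F_p(\mathcal N)}\leq\|\mu\|_{\mathcal F_p(\mathcal N')}\lesssim_p\|T_i\mu\|_{\mathcal F_p(\mathcal M)}$. The tree depends on $\mu$ and is discarded afterwards; there is no obstruction from the four-point condition, no need to embed $\mathcal M$ into a tree, and no limit or ultraproduct to pass through, because one never extends a Lipschitz map from $\mathcal N$ to $\mathcal M$.
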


To conclude this section, we show that the $p$-trace of $\mathcal N$ in $\mathcal M$ generally increases as $p$ approaches zero. Based on the results outlined above, it will suffice to consider extensions of the canonical inclusion $i: \mathcal N \to \mathcal F_p (\mathcal N)$.

\begin{thm}\label{thm:t_p_counterexample}
    Let $\mathcal N = \{0, 1, 2\} \subseteq (\mathbb R, |\cdot|)$ and $\mathcal M = \mathcal N \cup \{ 3/2 \}$. Then $\mathfrak{t}_1 (\mathcal N, \mathcal M) = 1$ but $\mathfrak{t}_p (\mathcal N, \mathcal M) > 1$ for any $0<p<1$. Moreover, we have $\mathfrak{t}_p (\mathcal N, \mathcal M) \to 2$ as $p\to 0$.
\end{thm}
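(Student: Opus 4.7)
The plan is to treat the three assertions in turn. For $\mathfrak{t}_1(\mathcal N, \mathcal M) = 1$, the midpoint rule $\widetilde f(3/2) := \tfrac{1}{2}(f(1) + f(2))$ provides a 1-Lipschitz extension of any 1-Lipschitz Banach-valued $f$: $\|\widetilde f(3/2) - f(i)\| \leq \tfrac{1}{2}$ for $i \in \{1, 2\}$ by convexity, and $\|\widetilde f(3/2) - f(0)\| \leq \tfrac{3}{2}$ by the triangle inequality. For $p \in (0, 1)$, I would invoke \Cref{prop:cpa_characterization} with target $\mathcal F_p(\mathcal N) = \operatorname{span}\{\delta_1, \delta_2\}$: writing $w := i'(3/2)$ for the free value of the extension,
\[
\mathfrak{t}_p(\mathcal N, \mathcal M) \;=\; \inf_{w \in \mathcal F_p(\mathcal N)}\max\bigl(1,\ \tfrac{2}{3}\|w\|_p,\ 2\|w - \delta_1\|_p,\ 2\|w - \delta_2\|_p\bigr),
\]
and the choice $w := \delta_2$ yields $\mathfrak{t}_p \leq 2$. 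The workhorse for the lower bounds is the standard decomposition formula for the three-point Lipschitz free $p$-space, obtained by expanding along the edges $(0,1), (0,2), (1,2)$ of $\mathcal N$:
\[
\|a\delta_1 + b\delta_2\|_p^p \;=\; \inf_{\gamma \in \mathbb R}\bigl\{|a - \gamma|^p + 2^p|b + \gamma|^p + |\gamma|^p\bigr\}.
\]

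For $\mathfrak{t}_p > 1$ when $p < 1$, I would assume $L(w) \leq 1$ and examine the three candidate minimisers $\gamma \in \{a{-}1,\ -b,\ 0\}$ of the inf formula applied to $w - \delta_i$. The constraints $\|w - \delta_i\|_p \leq \tfrac{1}{2}$ force $w$ onto the line $a + b = 1$ (any perturbation off the line pushes one of the two distances strictly above $\tfrac{1}{2}$), and then further pin $a = b = \tfrac{1}{2}$. A direct evaluation gives $\|(\delta_1 + \delta_2)/2\|_p = \tfrac{1}{2}(1 + 2^p)^{1/p}$, which strictly exceeds $\tfrac{3}{2}$ whenever $p < 1$: the function $g(p) := 3^p - 2^p - 1$ vanishes at $p = 1$ and has $g'(p) = 3^p\ln 3 - 2^p\ln 2 > 0$, so $g < 0$ on $(0, 1)$. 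This contradicts $(2/3)\|w\|_p \leq 1$.

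For $\mathfrak{t}_p \to 2$ as $p \to 0^+$, the guiding heuristic is that $\|\mu\|_p^p$ asymptotically counts the minimal number of non-zero atoms in any decomposition of $\mu$: single-atom elements have bounded $p$-norm, but every genuinely two-atom combination satisfies $\|\cdot\|_p \to \infty$. Given $\epsilon > 0$ and a hypothetical $w$ with $L(w) \leq 2 - \epsilon$, the bound $\|w - \delta_i\|_p < 1$ rules out all $w$ with $a + b \neq 1$ (for such $w$, the distance to one of $\delta_1, \delta_2$ forces a two-atom decomposition and blows up as $p \to 0$ by the inf formula), so $w = (1-t)\delta_1 + t\delta_2$ with $t \in [\epsilon/2, 1 - \epsilon/2]$. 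On this compact sub-interval the three candidate values $F(0) = (1-t)^p + 2^p t^p$, $F(1-t) = 2^p + (1-t)^p$, and $F(-t) = 1 + t^p$ for $\|w\|_p^p$ each equal $2 - O(p|\ln\epsilon|)$ as $p \to 0$, yielding $\|w\|_p \geq 2^{1/p} e^{-C(\epsilon)} \to \infty$—a contradiction with $\|w\|_p \leq 3(1 - \epsilon/2)$. The main technical challenge will be the uniform analysis of the inf formula in the $p \to 0^+$ limit, including verifying that the infimum is attained at (or near) one of the three candidate values of $\gamma$ and extracting the quantitative blow-up rate uniformly in $t$.
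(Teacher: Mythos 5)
Your route is the same as the paper's: reduce via \Cref{prop:cpa_characterization} to extending the canonical embedding into $\mathcal F_p(\mathcal N)$, parametrize the extension by $w=a\delta_1+b\delta_2$, and work with the explicit norm (your infimum over $\gamma$ reduces, by piecewise concavity of $\gamma\mapsto|a-\gamma|^p+2^p|b+\gamma|^p+|\gamma|^p$ between its kinks, to the three values $\gamma\in\{a,-b,0\}$, i.e.\ exactly to \Cref{fact:p_norm}). The $p=1$ midpoint argument, the upper bound $\mathfrak t_p\le 2$, and the pinning $a=b=1/2$ together with $3^p<2^p+1$ for $p<1$ are correct and essentially what the paper does. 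Two steps, however, are not complete.

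First, strictness of $\mathfrak t_p>1$. Showing that no single $w$ satisfies $\Lip i_{(a,b)}\le 1$ only yields $\mathfrak t_p\ge 1$, since $\mathfrak t_p$ is an infimum over $w$. You need either attainment of that infimum (coercivity $\Lip i_{(a,b)}\ge\tfrac23\lVert w\rVert\ge\tfrac23\max\{|a|,|b|\}$ plus continuity of the norm expression; the paper restricts to the compact square $0\le a,b\le 2$ and takes a minimizer) or a quantitative lower bound valid for all $w$ simultaneously. This is easy to add, but as written the step from the pointwise contradiction to $\mathfrak t_p>1$ is missing.

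Second, and more seriously, the $p\to0$ step contains a quantifier slip: the hypothetical $w$ depends on $p$, so the assertion that $\lVert w-\delta_i\rVert_p<1$ ``rules out all $w$ with $a+b\neq1$'' is false for fixed $p$. Indeed, if $|a+b-1|=\eta$ with $\eta^p$ small (e.g.\ $\eta\sim p^{2/p}$), the candidate $|a+b-1|^p+|b|^p$ in the formula for $\lVert w-\delta_1\rVert_p^p$ stays below $(1-\epsilon/2)^p$, so admissible points off the line exist for every $p<1$; your subsequent analysis on the segment $t\in[\epsilon/2,1-\epsilon/2]$ does not cover them, and you yourself defer the needed uniform analysis as ``the main technical challenge,'' so this part is a plan rather than a proof. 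The correct dichotomy is not ``on the line vs.\ off the line'' but ``$\min\{a,b\}$ bounded below vs.\ small,'' which is how the paper argues: if $\min\{a,b\}\ge\epsilon$, every candidate in \Cref{fact:p_norm} is at least $\epsilon^p(1+2^p)$, so $\lVert w\rVert_p\ge\epsilon(1+2^p)^{1/p}\to\infty$, contradicting $\tfrac23\lVert w\rVert_p\le 2$; hence any $w$ with $\Lip i_{(a,b)}\le 2$ has $\min\{a,b\}\to0$ as $p\to0$, and then the bound you already proved, $\Lip i_{(a,b)}\ge 2\max\{|1-a|,|1-b|\}\ge 2(1-\min\{a,b\})$, forces $\mathfrak t_p\to2$. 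This salvages your own computation (which only used that both coordinates are bounded away from $0$) and removes any need to analyze the infimum formula uniformly near the line.
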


In the proof, we will frequently need to estimate the $p$-norm of an element $m\in\mathcal F_p (\mathcal N)$. Let us note that while \cite[Theorem 2.2]{Cuth2023} provides a finite algorithm for the computation of the $p$-norm in Lipschitz free spaces over finite metrics, expressing the norm explicitly can still pose a significant challenge. However, when applied to the three-point space $\mathcal N$ under consideration, it yields the following formula.

\begin{fact}[{cf.~\cite[Corollary 2.7]{Cuth2023}}]\label{fact:p_norm}
    For any $0<p\leq 1$ and $x,\,y\in \mathbb R$, we have \[\lVert x\delta(1) + y\delta(2) \rVert_{\mathcal F_p (\mathcal N)}^p = \min \{ |x|^p + 2^p|y|^p, 2^p|x+y|^p + |x|^p, |x+y|^p + |y|^p\} \text. \]
\end{fact}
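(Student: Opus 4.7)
The plan is to compute $\lVert m\rVert_{\mathcal F_p(\mathcal N)}^p$ for $m = x\delta(1) + y\delta(2)$ by reducing to the standard atomic description of the $p$-norm. Namely, one defines (or derives from the universal property)
\[
    \lVert m\rVert_{\mathcal F_p(\mathcal N)}^p = \inf\Bigl\{\sum_i |\alpha_i|^p\,\rho(x_i,y_i)^p : m = \sum_i \alpha_i(\delta(x_i)-\delta(y_i))\Bigr\},
\]
the infimum being over all finite representations with $x_i,y_i\in\mathcal N$. Since $\mathcal N$ has only three distinct pairs $(0,1), (0,2), (1,2)$ with distances $1, 2, 1$, every representation can be aggregated edge-by-edge. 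Subadditivity of $t\mapsto t^p$ on $0<p\le 1$ (i.e.\ $|\alpha+\beta|^p\le|\alpha|^p+|\beta|^p$) guarantees that combining coefficients on the same edge does not increase the cost. Hence it is enough to optimise over three-atom representations
\[
    m = a(\delta(1)-\delta(0)) + b(\delta(2)-\delta(0)) + c(\delta(2)-\delta(1)),
\]
and matching coefficients forces $a = x+c$, $b = y-c$.

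Step two is to minimise
\[
    \phi(c) := |x+c|^p + 2^p|y-c|^p + |c|^p, \qquad c\in\mathbb R,
\]
and show $\lVert m\rVert^p = \inf_{c\in\mathbb R}\phi(c)$. The function $\phi$ is continuous and tends to $+\infty$ as $|c|\to\infty$. Each of its three summands $|c-t|^p$ is concave on each of the half-lines $(-\infty,t]$, $[t,\infty)$ for $0<p\le 1$, so $\phi$ is piecewise concave on the (up to three) intervals carved out of $\mathbb R$ by the kinks $\{-x,0,y\}$. On each bounded such interval a concave function attains its minimum at an endpoint; on the two unbounded intervals $\phi$ is eventually monotone and tends to $+\infty$, so its infimum there is attained at the finite endpoint. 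Consequently $\inf_c \phi(c) = \min\{\phi(-x),\phi(0),\phi(y)\}$, and a direct evaluation gives
\[
    \phi(-x) = |x|^p + 2^p|x+y|^p,\quad \phi(0) = |x|^p + 2^p|y|^p,\quad \phi(y) = |x+y|^p + |y|^p,
\]
which are the three quantities in the displayed minimum.

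The main conceptual step is the aggregation argument reducing arbitrary representations to representations on the three edges; it is straightforward once one invokes subadditivity of $t\mapsto t^p$. The main technical step is the piecewise-concavity analysis of $\phi$, where one must handle the degenerate cases in which two of $\{-x,0,y\}$ coincide (e.g.\ $x=0$, $y=0$, or $x+y=0$); in each such case two of the three candidate values of $\phi$ simply coalesce and the argument goes through unchanged. No completion subtleties intervene because $\mathcal F_p(\mathcal N)$ is finite-dimensional.
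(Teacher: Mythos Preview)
Your argument is correct. The reduction to three-edge representations via subadditivity of $t\mapsto t^p$ is valid, and the piecewise-concavity argument correctly locates the minimum of $\phi$ at one of the kinks $\{-x,0,y\}$. One small slip: three kinks partition $\mathbb R$ into up to \emph{four} intervals, not three, but this does not affect the reasoning. On the unbounded pieces your claim is justified because a concave function tending to $+\infty$ at the infinite end must be monotone toward that end (its one-sided derivative is nonincreasing and cannot become negative), so the infimum sits at the finite endpoint.

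The paper does not prove this fact itself; it quotes it from \cite[Corollary~2.7]{Cuth2023}, which in turn is an application of a general finite algorithm (\cite[Theorem~2.2]{Cuth2023}) for computing $\lVert\cdot\rVert_{\mathcal F_p}$ over finite metric spaces. That algorithm enumerates optimal ``transport plans'' and shows the infimum in the atomic description is always attained at an extremal configuration. Your approach is more elementary and self-contained for this particular three-point space: you bypass the general machinery by exploiting directly that the representation space is one-parameter after edge-aggregation, and then use concavity instead of a combinatorial enumeration. The cited approach has the advantage of applying to arbitrary finite metric spaces; yours has the advantage of being short and transparent, but does not generalize as readily.
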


We are now ready to give the proof of the theorem.

\begin{proof}
    In line with \Cref{prop:cpa_characterization}, we will examine extensions $i' : \mathcal M \to \mathcal F_p (\mathcal N)$ of the canonical embedding $i : \mathcal N \to \mathcal F_p (\mathcal N)$. To this end, we develop estimates on the Lipschitz constant $\Lip i'$, with respect to the image of $3/2$ under this map.

    Observe that if $a,\,b\in\mathbb R$ are given and $i_{(a, b)}: \mathcal M \to \mathcal F_p (\mathcal N)$ is an extension of the canonical isometric embedding $i$, defined by $i_{(a, b)}(3/2) = a\delta(1) + b\delta(2)$, then the Lipschitz constant of $i_{(a, b)}$ equals
    \begin{multline}\label{eq:counterxample_l_constant}
        \Lip i_{(a, b)} = \max \{ 1, 2/3\cdot\lVert a\delta(1) + b\delta(2) \rVert_{\mathcal F_p (\mathcal N)}, \\ 2\lVert (a-1)\delta(1) + b\delta(2) \rVert_{\mathcal F_p (\mathcal N)}, 2\lVert a\delta(1) + (b-1)\delta(2) \rVert_{\mathcal F_p (\mathcal N)} \} \text.
    \end{multline}

    The $p$-norm of $\lVert a\delta(1) + b\delta(2) \rVert_{\mathcal F_p (\mathcal N)}$, as shown in \Cref{fact:p_norm}, is given by
    \begin{equation}\label{eq:counterexample_norm_expression}
        \lVert a\delta(1) + b\delta(2) \rVert_{\mathcal F_p (\mathcal N)}^p = \min \{ |a|^p + 2^p|b|^p, 2^p|a+b|^p + |a|^p, |a+b|^p + |b|^p\}\text.
    \end{equation}

    Similarly, the expression for $\lVert a\delta(1) + (b-1)\delta(2) \rVert_{\mathcal F_p (\mathcal N)}^p$ is
    \begin{equation}\label{eq:counterexample_p_norm}
        \min \{ |a|^p + 2^p|b-1|^p, 2^p|a+b-1|^p + |a|^p, |a+b-1|^p + |b-1|^p\} \text.
    \end{equation}
    
    Specifically, we deduce that $\lVert a\delta(1) + (b-1)\delta(2) \rVert_{\mathcal F_p (\mathcal N)}$ is bounded from below by $|1-b|$. Likewise, the value of $\lVert (a-1)\delta(1) + b\delta(2) \rVert_{\mathcal F_p (\mathcal N)}$ has a lower bound of $|1-a|$. As a result, we establish that
    \begin{equation}\label{eq:counterexample_lip_1}
        \Lip i_{(a, b)} \geq 2 \max \{ |1-b|,  |1-a|\} \text.
    \end{equation}
    In particular, this implies that $\Lip i_{(a, b)} > 2$, unless both $0\leq a,\,b\leq 2$.

    Note that $\Lip i_{(1, 0)} = 2$ for any $0<p<1$. This is evident upon noting that, by \cref{it:F_p_isometry} of the Lipschitz free $p$-space $\mathcal F_p (\mathcal N)$, we have $\lVert i_{(1, 0)}(3/2) - i_{(1, 0)}(x)\rVert_{\mathcal F_p (\mathcal N)} = \lVert \delta(1) - \delta(x) \rVert_{\mathcal F_p (\mathcal N)} =1$ for all $x\in \mathcal N \setminus \{ 1 \}$. Therefore, we deduce that $\mathfrak{t}_p (\mathcal N, \mathcal M) \leq \Lip i_{(1, 0)} = 2$.

    It follows that for any $0<p\leq 1$, there exists an extension $i_{(a^*, b^*)}$ with the smallest Lipschitz constant, which satisfies $0 \leq a^*,\,b^* \leq 2$. Indeed, this extension emerges as a minimum of \eqref{eq:counterxample_l_constant}, a continuous expression by \Cref{fact:p_norm}, over a compact interval $\{ (a, b): 0 \leq a,\, b \leq 2 \}$. Furthermore, given that the minimum is attained, \Cref{prop:cpa_characterization} yields $\mathfrak{t}_p (\mathcal N, \mathcal M)=\Lip i_{(a^*, b^*)}$.

    We assert that $\Lip i_{(a^*, b^*)} > 1$ for any given $0<p<1$. For the sake of contradiction, let us assume that $\Lip i_{(a^*, b^*)}=1$. Once again, referring to \eqref{eq:counterexample_lip_1}, we deduce that $\min \{ a^*, b^* \} \geq 1/2$. But then, \eqref{eq:counterexample_norm_expression} implies that $\lVert a^*\delta(1) + b^*\delta(2) \rVert_{\mathcal F_p (\mathcal N)} \geq (2^p+1)^{1/p}/2 > 3/2$, and, hence, $\Lip i_{(a^*, b^*)} > 1$ by \eqref{eq:counterxample_l_constant}, a contradiction.

    Lastly, we wish to show that $\mathfrak{t}_p (\mathcal N, \mathcal M) \to 2$ as $p\to 0$. To that end, we relabel the coefficients $a^*$ and $b^*$, associated with the optimal extensions, as $a_p$ and $b_p$ for each $0<p<1$.

    We assert that $\min \{ a_p, b_p \} \to 0$ as $p\to 0$. To prove this, suppose, on the contrary, that there exists $\epsilon > 0$ and a sequence $(p_i)_{i\in\mathbb N} \in (0, 1]^{\mathbb N}$, satisfying $p_i \to 0$ as $i\to\infty$, such that $\min \{ a_{p_i}, b_{p_i} \} > \epsilon$ for all $i\in\mathbb N$. However, then \eqref{eq:counterexample_norm_expression} reveals that $\lVert a_{p_i}\delta(1) + b_{p_i}\delta(2) \rVert_{p_i} > \epsilon (2^{p_i} + 1)^{1/p_i}$, with the right-side converging to infinity as $i\to\infty$. Consequently, by \eqref{eq:counterxample_l_constant}, we deduce that $\Lip i_{(a_{p_i}, b_{p_i})} \to \infty$ as $i\to\infty$. However, by our choice of $a_p$ and $b_p$, we also know that $\Lip i_{(a_{p_i}, b_{p_i})} \leq 2$ for any $i\in\mathbb N$, a contradiction.

    Altogether, having established that $\min \{ a_p, b_p \} \to 0$ as $p \to 0$, we note that \eqref{eq:counterexample_lip_1} implies that $\Lip i_{(a_p, b_p)} \to 2$ as $p\to 0$. This proves the final part, and the claim follows.
\end{proof}

\section{Open Problems}\label{sec:open_problems}

In \Cref{sec:nagata_covers}, we observed many times that the estimates on the absolute $p$-extendability constant increase as $p$ approaches zero, and specifically, they grow exponentially in terms of $1/p$. Subsequently, in \Cref{sec:lip_free_spaces}, we established a basic counterexample demonstrating that the $p$-trace typically does indeed depend on $p$. Consequently, we are curious to see how this behavior is exhibited across the different classes of spaces that we have considered in this paper, and whether we can derive lower estimates on this. 

In the following discussion, whenever $\mathcal N$ is a metric space with $\mathfrak{ae}_1 (\mathcal N) < \infty$, we denote $\mathfrak{q} (\mathcal N, p) = \mathfrak{ae}_p (\mathcal N)/ \mathfrak{ae}_1 (\mathcal N)$ for each $0<p\leq 1$. It is easy to see that $\lim_{p\to 0} \mathfrak{q} (\mathcal N, p)$ exists as $p\mapsto \mathfrak{ae}_p (\mathcal N)$ does not increase in $p$.

\begin{question}\label{question:ae_p_doubling}
    Is it true that for each $n\geq 2$, we have $\sup \{ \lim_{p\to 0} \mathfrak{q} (\mathcal N, p) : \mathcal N \text{ is a doubling with } \lambda_{\mathcal N} \leq n\} = \infty$? More specifically, can it be shown that for each fixed $n\geq 2$, $p\mapsto \log \sup \{ \mathfrak{q} (\mathcal N, p) : \lambda_{\mathcal N} \leq n\}$ grows proportionally to $1/p$ as $p\to 0$? What about if we consider $\mathcal N$ to only have a finite Nagata dimension at most $d$ with constant $\gamma$?
\end{question}

In relation to the example of \Cref{thm:t_p_counterexample}, we would like to know whether, if the answer to the very first question in \Cref{question:ae_p_doubling} is positive, it could potentially be identified within examples involving finite metric spaces.

Recall that by \Cref{cor:ae_p_doubling}, we have $\mathfrak{ae}_p (n) < \infty$ for every $n\in\mathbb N$ and $0<p\leq 1$ (we have a trivial estimate $\lambda_{\mathcal N} \leq n$ for every $n$-point metric space $\mathcal N$). Interestingly, observe also that in the proof of \Cref{thm:t_p_counterexample}, while the optimal projection was identified as a weighted sum of two points in the target Lipschitz free $p$-space $\mathcal F_p (\mathcal N)$ for $p=1$, the norm of that particular extension would become excessively large as $p\to 0$. In particular, as $p\to 0$, the Lipschitz norm of the optimal extension converged to that of a "trivial" extension that projected the additional point to a single point in $\mathcal F_p (\mathcal N)$. It is clear that such a projection does not require the underlying linear structure of the target space.

At the same time, note that a remark to \cite[Theorem 1.1]{Basso2018} shows that for each $m\in\mathbb N$, there exist spaces $\mathcal N \subset \mathcal M$ with $|\mathcal N| = 2$ and $|\mathcal M \setminus \mathcal N| = m$, such that if $f$ is the identity map $f: \mathcal N \to \mathcal N$, then $\Lip f' \geq (m+1)\Lip f$ for any extension $f':\mathcal M \to \mathcal N$ of $f$. Consequently, we pose the following question.

\begin{question}
    Is it true that $\lim_{p\to 0} \mathfrak{ae}_p (n) = \infty$ for each $n\geq 2$? Additionally, for any $n,\,m\in\mathbb N$, is it true that $\sup \{\lim_{p\to 0} \mathfrak{t}_p (\mathcal N, \mathcal M) : \mathcal N \subset \mathcal M \text{ with } |\mathcal N| \leq n \text{ and } |\mathcal M \setminus \mathcal N|\leq m \} = m+1$? What if we relax the condition that $|\mathcal N| \leq n$?
\end{question}

\begin{rem}\label{rem:basso}
    We wish to acknowledge that a generalization of the extension theorem from doubling spaces to spaces with finite Nagata dimension has been recently independently discovered by \textcite{Basso2023lipschitz}, in the narrower context of the Banach setting with $p=1$. This is the content of \Cref{introthm:ae_p_doubling} and, specifically, \Cref{thm:ae_p_nagata}. Interestingly, both methods of proof share similarities. Our result is more broadly applicable, dealing with $p$-Banach spaces for $p<1$. Conversely, additional classes of spaces are considered in \cite{Basso2023lipschitz}, specifically Lipschitz $n$-connected spaces (refer to \textcite{Lang2005}), and the author provides estimates on the absolute 1-extendability constant for finite metric spaces.
\end{rem}

\section*{Appendix}\label{sec:appendix}\appendix

We have adapted the results of \textcite{Fakcharoenphol2003} to the setting of metric graphs. It is important to note that, unlike in \cite{Fakcharoenphol2003} where the authors deal with the unweighted graph $G$, we cannot generally assume the existence of shortest-length paths in $\Sigma(G)$ and hence have to approximate these accordingly. Thus, if elements $u,\,v \subseteq C$ of some connected subset $C$ of $\Sigma(G)$ and $\epsilon > 0$ are given, we define a $p: [0, 1] \to C \subseteq \Sigma (G)$ to be an $\epsilon$-path in $C$ from $u$ to $v$ if $p$ is continuous and injective, $p(0)=u$, $p(1)=v$, and its length is less than $\rho_C(v, w) + \epsilon$. It is evident that every path can be written as a union of subpaths contained within the edges of $G$. Moreover, in what follows, it will be sufficient to fix any $0<\epsilon < r$.

\distance*

\begin{proof}
    We wish to construct for each $i\in \{2, \ldots, m-1\}$ sets $\mathcal{A}_{m-i+1}, \ldots, \mathcal{A}_{m}$ (called \emph{supernodes}) such that for each $j,\,j' \in \{m-i+1, \ldots, m\}$, where $j\neq j'$,
    \begin{enumerate}[label=(\roman*), ref=\roman*]
        \item\label{it:minor_1} $s_j \in \mathcal A_j \subseteq C_j$,
        \item\label{it:minor_2} the intersection of $\mathcal A_j$ and $\mathcal A_{j'}$ is a singleton,
        \item\label{it:minor_3} $\mathcal A_j \cap \mathcal A_{j'} \cap \mathcal A_{j''} = \emptyset$ for any given $j''\in \{m-i+1, \ldots, m\}\setminus \{j, j'\}$,
        \item\label{it:minor_9} if $T$ is the set of all points $x$ such that $x\in \mathcal A_l \cap\mathcal A_{l'}$ for some $l,\,l'\in \{m-i+1, \ldots, m\}$, where $l\neq l'$, then $\mathcal A_j \setminus T$ is pathwise connected.
    \end{enumerate}

    In addition, for each $j\in \{m-i+1, \ldots, m\}$, we construct a non-self-intersecting $\epsilon$-path $P_j$ connecting $u_j\in\mathcal A_j$ to $s_{m-i}$, a subpath $T_j \subset P_j$ (called a \emph{tail}) of length $24r$ connecting $u_j$ to a point $t_j$ (called the \emph{tip} of $T_j$) and a point $h_j \in T_j$ (called the \emph{middle point} of $T_j$), which is at distance $12r$ from $u_j$  along the path $P_j$.

    We will assume that for every $j,\,j' \in \{m-i+1, \ldots, m\}$, where $j\neq j'$,
    \begin{enumerate}[start=5, label=(\roman*), ref=\roman*]
        \item\label{it:minor_4} $T_j \cap (\mathcal A_{j'} \cup T_{j'}) = \emptyset$,
        \item\label{it:minor_5} the distance between the tails $T_j$ and $T_{j'}$ in $\Sigma (G)$ is greater than $24r(m-i)$. Moreover, $\rho(h_j, h_{j'}) > 24r(m-i+1)$,
        \item\label{it:minor_6} $\rho (h_j, s_{m-i}) > 24r(m-i+1)$,
        \item\label{it:minor_8} $\rho(s_j, h_j) \leq 12r(i-1)$.
    \end{enumerate}

    Moreover, we claim that the following properties are then always satisfied.
    
    \begin{claim}\label{claim:minor_properties}
        For every $j,\,j' \in \{m-i+1, \ldots, m\}$, where $j\neq j'$, it follows from \cref{it:minor_1,it:minor_2,it:minor_3,it:minor_4,it:minor_5,it:minor_6,it:minor_8,it:minor_9} that
        \begin{enumerate}[start=9, label=(\roman*), ref=\roman*]
            \item\label{it:minor_4'} $((P_{j} \setminus T_{j}) \cup \{t_{j}\}) \cap T_{j'} = \emptyset$, and $\mathcal A_j \cap ((P_{j} \setminus T_{j}) \cup \{t_{j}\})= \emptyset$,
            \item\label{it:minor_7} for any $w\in C_{m-i+1}$, we have $\rho_{C_{m-i}}(s_{m-i}, w) - \rho_{C_{m-i}}(s_{m-i}, h_{j})> 9r-\epsilon$ and $\rho_{C_{m-i}}(s_{m-i}, w) > 21r-\epsilon$. Also, for every $w'\in ((P_{l} \setminus T_{l}) \cup \{t_{l}\})$, where $l \in \{m-i+1, \ldots, m\}$, we have $\rho_{C_{m-i}}(s_{m-i}, h_{j}) - \rho_{C_{m-i}}(s_{m-i}, w') > 6r - \epsilon$.
        \end{enumerate}
    \end{claim}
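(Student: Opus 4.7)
The plan is to prove \cref{it:minor_7} first using the annulus structure of $C_{m-i+1}$ inside $C_{m-i}$, and then to derive \cref{it:minor_4'} by combining \cref{it:minor_7} with the path geometry of the $P_j$'s and the metric bounds \cref{it:minor_4,it:minor_5,it:minor_6}. The central device throughout is the pair of inequalities
\[
\rho_{C_{m-i}}(s_{m-i}, u_j) - d \leq \rho_{C_{m-i}}(s_{m-i}, v) \leq \rho_{C_{m-i}}(s_{m-i}, u_j) + \epsilon - d,
\]
valid for any $v$ on $P_j$ at path-distance $d$ from $u_j$; it is immediate from $P_j$ being an $\epsilon$-path of total length at most $\rho_{C_{m-i}}(s_{m-i}, u_j) + \epsilon$. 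Both the lower and upper halves will be used repeatedly at $v \in \{h_j, t_j, w\}$ with $d\in\{12r, 24r\}$ or $d\geq 24r$.

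For \cref{it:minor_7}, I use that $C_{m-i+1}$ is a pathwise connected component of $A_{n_{m-i}}$, so every point of $C_{m-i+1}$ has $C_{m-i}$-distance to $s_{m-i}$ lying in a half-open interval of width $3r$. Applying this to $u_j$ (upper) and to any $w\in C_{m-i+1}$ (lower), together with the upper device at $v=h_j$, $d=12r$, yields (a). For (b), I chain (a) with \cref{it:minor_6}, which forces $\rho_{C_{m-i}}(s_{m-i}, h_j) \geq \rho(s_{m-i}, h_j) > 24r(m-i+1) \geq 48r$. For (c), I apply the upper device to $w' \in (P_l \setminus T_l)\cup\{t_l\}$ (path-distance at least $24r$ from $u_l$), combine with the lower device at $v=h_j$, $d=12r$, and absorb the $3r$ slack from the annulus bound between $u_j, u_l \in C_{m-i+1}$, leaving $9r-\epsilon > 6r-\epsilon$.

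With \cref{it:minor_7} available, I turn to \cref{it:minor_4'}. For $\mathcal A_j \cap ((P_j \setminus T_j) \cup \{t_j\}) = \emptyset$, any witness $w$ lies in $\mathcal A_j \subseteq C_j \subseteq C_{m-i+1}$, so \cref{it:minor_7}(a) forces $\rho_{C_{m-i}}(s_{m-i}, w) - \rho_{C_{m-i}}(s_{m-i}, h_j) > 9r - \epsilon$. The device (with $d\geq 24r$ at $w$ and $d=12r$ at $h_j$) however bounds the same difference above by $\epsilon - 12r$, which contradicts $\epsilon < r$. For $((P_j \setminus T_j) \cup \{t_j\}) \cap T_{j'} = \emptyset$ with $j \neq j'$, the case $w=t_j$ is ruled out immediately by \cref{it:minor_4} since $t_j \in T_j$.

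The main obstacle is the remaining case $w \in (P_j \setminus T_j) \cap T_{j'}$, which requires chaining several inequalities. Since $w, h_{j'} \in T_{j'} \subseteq P_{j'}$ and $h_{j'}$ is the midpoint of the length-$24r$ tail, their path-distance on $P_{j'}$ is at most $12r$, so $\rho(w, h_{j'}) \leq 12r$; \cref{it:minor_5} then gives $\rho(h_j, w) > 24r(m-i+1) - 12r$. On the other hand, $h_j$ is at path-distance $12r$ from $u_j$ on $P_j$ while $w$ is at path-distance $d_j > 24r$ from $u_j$ on $P_j$, so $\rho(h_j, w) \leq d_j - 12r$, forcing $d_j > 24r(m-i+1)$. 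Substituting into the length constraint $d_j + \rho_{C_{m-i}}(w, s_{m-i}) \leq \rho_{C_{m-i}}(s_{m-i}, u_j) + \epsilon$, together with $\rho_{C_{m-i}}(w, s_{m-i}) \geq \rho_{C_{m-i}}(s_{m-i}, u_{j'}) - 24r$ (from $w\in T_{j'}$ at path-distance at most $24r$ from $u_{j'}$ along $P_{j'}$), I get $\rho_{C_{m-i}}(s_{m-i}, u_j) - \rho_{C_{m-i}}(s_{m-i}, u_{j'}) > 24r(m-i) - \epsilon \geq 24r - \epsilon$. But $u_j, u_{j'} \in A_{n_{m-i}}$ forces the left-hand side strictly below $3r$, so $\epsilon > 21r$, contradicting $\epsilon < r$ and completing the proof.
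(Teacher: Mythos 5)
Your proof is correct and follows essentially the same route as the paper's: both arguments rest on playing the $3r$-wide annulus constraint satisfied by points of $C_{m-i+1}$ (in particular by $u_j$ and $u_{j'}$) against length decompositions of the near-geodesic $\epsilon$-paths $P_j$, together with the tail/midpoint separations from the inductive hypotheses, to reach contradictions with $\epsilon<r$. The differences are cosmetic only — you prove the distance estimates first and reuse them for the disjointness statements, derive the $21r-\epsilon$ bound by chaining with the hypothesis $\rho(h_j,s_{m-i})>24r(m-i+1)$, and use the midpoint separation where the paper uses the tail separation and its intermediate inequality \eqref{eq:minor_proof_distance1}.
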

    
    \begin{proof}[{Proof of~\Cref{claim:minor_properties}}]
    To address \cref{it:minor_4'}, let us assume for a contradiction that $v\in ((P_j \setminus T_j) \cup \{t_j\}) \cap T_{j'}$ for some $j,\,j' \in \{m-i+1, \ldots, m\}$, where $j\neq j'$. We recall that by \cref{it:minor_5}, $v\in T_{j'}$ and $t_j$ are at distance more than $24r(m-i)$ from each other.

    Also, we have
    \begin{equation*}
        \begin{split}
            \rho_{C_{m-i}}(s_{m-i}, u_{j'}) &\leq \rho_{C_{m-i}}(s_{m-i}, v) + \rho_{C_{m-i}}(v, u_{j'})\\
            &\leq \rho_{C_{m-i}}(s_{m-i}, v) + 24r \text.
        \end{split}
    \end{equation*}
    Similarly, decomposing the $\epsilon$-path $P_j$ into segments from $s_{m-i}$ over $v$ and $t_j$ up to $u_j$, we get
    \begin{equation*}
        \begin{split}
            \rho_{C_{m-i}}(s_{m-i}, u_j) &> \rho_{C_{m-i}}(s_{m-i}, v) + \rho_{C_{m-i}}(v, t_j) + 24r - \epsilon\\
            &> \rho_{C_{m-i}}(s_{m-i}, v) + 24r(m-i+1) - \epsilon \text.
        \end{split}
    \end{equation*}
    
    Consequently, we see that $|\rho_{C_{m-i}}(s_{m-i}, u_{j'}) - \rho_{C_{m-i}}(s_{m-i}, u_j)|\geq 24r(m-i) - \epsilon > 3r$. At the same time, there exists $n_{m-i} \in \mathbb N_0$ such that $u_j,\,u_{j'} \in C_{m-i+1} \subseteq A_{n_{m-i}} = \{v\in C_{m-i}\colon 3r(n_{m-i}-1)\leq \rho_{C_{m-i}}(v,s_{m-i})-\delta_{m-i} r < 3rn_{m-i}\}$. It follows that $|\rho_{C_{m-i}}(s_{m-i}, u_{j}) - \rho_{C_{m-i}}(s_{m-i}, u_{j'})|<3r$, which is absurd.

    As for the second part of \cref{it:minor_4'}, we can similarly verify that for any $v\in (P_j \setminus T_j) \cup \{t_j\}$, we have $\rho_{C_{m-i}}(s_{m-i}, u_j) - \rho_{C_{m-i}}(s_{m-i}, v)> 24r -\epsilon$. As in the already proven part, we recall that $|\rho_{C_{m-i}}(s_{m-i}, u_j)-\rho_{C_{m-i}}(s_{m-i}, w)| < 3r$ for any $w\in C_{m-i+1}$ by the assumption. That is, for any $v\in (P_j \setminus T_j) \cup \{t_j\}$ and $w\in C_{m-i+1}$ we get
    \begin{equation}\label{eq:minor_proof_distance1}
        \rho_{C_{m-i}} (s_{m-i}, w) - \rho_{C_{m-i}}(s_{m-i}, v) > 21r-\epsilon\text.
    \end{equation}
    In particular, we deduce that $(P_j \setminus T_j) \cup \{t_j\}$ and $\mathcal A_{j} \subseteq C_{m-i+1}$ are disjoint.

    In order to establish \cref{it:minor_7}, we can verify as above that for any $w\in C_{m-i+1}$ and $j\in \{m-i+1, \ldots, m\}$, we have
    \begin{equation*}
        \begin{split}
            \rho_{C_{m-i}} & (s_{m-i}, w) - \rho_{C_{m-i}}(s_{m-i}, h_{j}) \\ & > (\rho_{C_{m-i}}(s_{m-i}, u_{j}) - 3r) - (\rho_{C_{m-i}}(s_{m-i}, u_{j}) - 12r + \epsilon) = 9r-\epsilon.
        \end{split}
    \end{equation*}
    Clearly also $\rho_{C_{m-i}}(s_{m-i}, w) > 21r-\epsilon$ by \eqref{eq:minor_proof_distance1}.
    
    At the same time, we have $\rho_{C_{m-i}}(s_{m-i}, u_j) - \rho_{C_{m-i}}(s_{m-i}, h_{j}) \leq 12r$ for each $j\in \{m-i+1, \ldots, m\}$. Consequently, we obtain that $\rho_{C_{m-i}}(s_{m-i}, w) - \rho_{C_{m-i}}(s_{m-i}, h_{j}) < 15r$ for any $w\in C_{m-i+1}$. Similarly, it follows from \eqref{eq:minor_proof_distance1} that $\rho_{C_{m-i}}(s_{m-i}, w) - \rho_{C_{m-i}}(s_{m-i}, w') > 21r -\epsilon$ for any $w'\in (P_{j'} \setminus T_{j'}) \cup \{t_{j'}\}$, where $j'\in \{m-i+1, \ldots, m\}$. Subtracting the inequalities, we obtain $\rho_{C_{m-i}}(s_{m-i}, h_{j}) - \rho_{C_{m-i}}(s_{m-i}, w') > 6r - \epsilon$. This establishes \cref{it:minor_7}.
    \end{proof}
    
    Having established \Cref{claim:minor_properties}, we proceed to the inductive construction.

    Let $i=2$. We consider an $\epsilon$-path $p$ in $C_{m-1}$ connecting $s_{m-1}$ and $s_m$. We can easily see that $p$ can be cut into two subpaths of equal lengths, corresponding to sets $\mathcal A_{m-1}$ and $\mathcal A_{m}$ satisfying \cref{it:minor_1,it:minor_2,it:minor_3,it:minor_9}. Also, for each $j\in \{m-1, m\}$, we construct the tail $T_j \ni s_j$ such that it is subpath of length $24r$ of some $\epsilon$-path $P_j$ in $C_{m-2}$ connecting $s_j$ and $s_{m-2}$. Additionally, we let $t_j \in T_j$ denote the other endpoint of $T_j$, other than $s_j$, and let the middle point $h_j \in T_j$ be a point at distance $12r$ from $s_j$, along the tail $T_j$.

    It is easy to check that the tails $T_{m-1}$ and $T_m$ are disjoint. Specifically, note that if $u\in T_{m-1}$ and $v\in T_m$, then $\max\{ \rho(s_{m-1}, u), \rho(s_m, v) \} \leq 24r$, so that $\rho(u, v) \geq \rho(s_{m-1}, s_m) - 2\cdot 24r > 24r(m-2)$. Similarly, $\rho(h_{m-1}, h_m) > 24r(m-1)$ and for each $j\in \{m-1, m\}$, we have $\rho (s_{m-2}, h_j) \geq \rho(s_{m-2}, s_j) - \rho(s_j, h_j) > 24r(m-1)$. This establishes \cref{it:minor_5,it:minor_6}.
    
    To adress \cref{it:minor_4}, it is easy to see that since $s_m$ and $\mathcal A_{m-1}$ are at distance at least $24rm-\epsilon$ from each other in $\Sigma (G)$, the tail $T_m$ of length $24r$ from $s_m$ does not meet $\mathcal A_{m-1}$ (similarly for the role of $m$ and $m-1$ interchanged). We can also easily verify the validity of \cref{it:minor_8}.
    
    Altogether, we have verified that also the paths $P_j$, tails $T_j$ and the tips $t_j$ and middle points $h_j$, where $j\in \{m-1, m\}$, satisfy \cref{it:minor_4,it:minor_5,it:minor_6,it:minor_8}. This concludes the proof of the basis step.

    Let now $i \in \{2, \ldots, m-2\}$ be such that there exist supernodes $\mathcal A_j$, paths $P_j$, tails $T_j$, and tips $t_j$ and middle points $h_j$, where $j\in \{m-i+1, \ldots, m\}$, satisfying \cref{it:minor_1,it:minor_2,it:minor_3,it:minor_4,it:minor_5,it:minor_6,it:minor_8,it:minor_9} (and thus also \cref{it:minor_4',it:minor_7}).
    
    For each $j \in \{m-i+1, \ldots, m\}$, we let $\mathcal A'_{j} = \mathcal A_j \cup T_j$. We also define $\mathcal A'_{m-i}$ to be the union of all paths $(P_j \setminus T_j) \cup \{ t_j \}$, where $j \in \{m-i+1, \ldots, m\}$. By the induction hypothesis, it is easy to see that $\mathcal A'_{m-i}, \ldots, \mathcal A'_m$ satisfy \cref{it:minor_1,it:minor_2,it:minor_3}.
    
    To address \cref{it:minor_9}, we let $T'$ denote the set of all $x$ such that $x\in \mathcal A'_l \cap \mathcal A'_{l'}$ for some $l,\,l'\in \{m-i, \ldots, m\}$, where $l\neq l'$. Also, let $T$ be as in \cref{it:minor_9}. Note that for any $j \in \{m-i+1, \ldots, m\}$, we have $T' \cap \mathcal A'_j = (T \cap \mathcal A_j) \cup \{t_j\}$ by the construction. Moreover, $T \cap T_j = \emptyset$ by \cref{it:minor_4}. Consequently, it follows that $\mathcal A'_j \setminus T' = (\mathcal A_j \cup T_j) \setminus T' = (\mathcal A_j \setminus T) \cup (T_j \setminus \{ t_j \})$ is pathwise connected. Similarly, it follows from \cref{it:minor_4} that $\mathcal A'_{m-i} \setminus T' = \mathcal A'_{m-i} \setminus \{ t_j : j\in\{m-i+1, \ldots, m\}\} = \bigcup_{j\in\{m-i+1, \ldots, m\}} P_j \setminus T_j$ is pathwise connected.

    For each $j\in \{m-i, \ldots, m\}$, we denote $u_j=h_{j}$ (or $u_j = s_{m-i}$ for $j=m-i$) and construct an $\epsilon$-path $P'_j$ in $C_{m-i-1}$, connecting $u_j$ and $s_{m-i-1}$, such that $P'_j \cap C_{m-i}$ is a subpath of $P'_j$. In other words, there exists $v_0 \in P'_j$ such that $P'_j$ is the union of two paths: $p_1$ in $C_{m-i}$ connecting $u_j$ and $v_0$, and $p_2$ from $v_0$ to $s_{m-i-1}$. Moreover, $p_2\setminus\{v_0\}\subset C_{m-i-1}\setminus C_{m-i}$. 

    Note that we impose an additional condition on $P'_j$, compared to the basis step, to ensure that $T'_j$ is disjoint from $\mathcal A'_{j'}$, where $j'\neq j$. Indeed, this is because we generally know less about the distance from $u_j$ to the set $\mathcal A'_{j'}$.

    \begin{claim}\label{claim:minor_path}
        There exists a path $P'_j$ with the desired properties.
    \end{claim}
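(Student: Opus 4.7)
The plan is to construct $P'_j$ as a concatenation $p_1 \cdot p_2$ by first locating a suitable exit point $v_0 \in C_{m-i}$ along a near-shortest path from $u_j$ to $s_{m-i-1}$. I would fix an auxiliary parameter $\eta \in (0, \epsilon)$ and let $Q : [0, 1] \to C_{m-i-1}$ be any $\eta$-path from $u_j$ to $s_{m-i-1}$; such a $Q$ exists by pathwise connectivity of $C_{m-i-1}$. Setting $t^* := \sup\{t \in [0,1] : Q(t) \in C_{m-i}\}$, I note that $t^* \geq 0$ because $u_j \in C_{m-i}$, and $t^* < 1$ because $s_{m-i-1}$ lies strictly inside the inner boundary of the annulus $A_{n_{m-i-1}}$, of which $C_{m-i}$ is a pathwise connected component, so $s_{m-i-1} \notin C_{m-i}$.

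Next I would choose $v_0 \in C_{m-i}$. If $Q(t^*) \in C_{m-i}$, take $v_0 := Q(t^*)$; otherwise $Q(t^*) \in \overline{C_{m-i}} \setminus C_{m-i}$, which can occur at the half-open outer boundary of $A_{n_{m-i-1}}$, and I would instead pick $v_0 := Q(t^{**})$ for some $t^{**} < t^*$ sufficiently close that $Q(t^{**}) \in C_{m-i}$. The second half is then $p_2 := Q|_{[t^*, 1]}$ (or $Q|_{[t^{**}, 1]}$ in the perturbed case); by construction $p_2 \setminus \{v_0\} \subseteq C_{m-i-1} \setminus C_{m-i}$. For $p_1$, I would use the pathwise connectivity of $C_{m-i}$ to take any near-shortest path in $C_{m-i}$ from $u_j$ to $v_0$, and after minor smoothing at the join the concatenation $P'_j = p_1 \cdot p_2$ is a continuous injective path with the required subpath structure.

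The principal obstacle is verifying the $\epsilon$-path property $\ell(p_1) + \ell(p_2) < \rho_{C_{m-i-1}}(u_j, s_{m-i-1}) + \epsilon$, since a priori the weak distance $\rho_{C_{m-i}}(u_j, v_0)$ could exceed $\rho_{C_{m-i-1}}(u_j, v_0)$ because paths in $C_{m-i}$ are confined to the annulus. I would resolve this by a near-minimality argument on $Q$: since $s_{m-i-1}$ is strictly interior to $A_{n_{m-i-1}}$, any near-shortest path from $u_j$ to $s_{m-i-1}$ must eventually exit $C_{m-i}$ through the inner boundary, and any excursion in which $Q$ first leaves $C_{m-i}$ and then returns to it can be shortcut by rerouting from the exit point directly to $s_{m-i-1}$ without re-entering, yielding a strictly shorter candidate path. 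By iteratively applying such shortcuts and then taking $\eta$ sufficiently small, I may assume $Q|_{[0, t^*]} \subseteq C_{m-i}$ outright, whence $p_1 := Q|_{[0, t^*]}$ satisfies $\ell(P'_j) = \ell(Q) < \rho_{C_{m-i-1}}(u_j, s_{m-i-1}) + \eta < \rho_{C_{m-i-1}}(u_j, s_{m-i-1}) + \epsilon$.
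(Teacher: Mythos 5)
There is a genuine gap at the crux of your argument. Your overall outline (keep the part of a near-shortest path that lies in $C_{m-i}$ as an initial subpath and splice at the exit point) is the same as the paper's, but the sentence ``any excursion in which $Q$ first leaves $C_{m-i}$ and then returns to it can be shortcut by rerouting from the exit point directly to $s_{m-i-1}$ without re-entering'' is precisely the statement that needs proof, and you give no reason why the reroute avoids $C_{m-i}$. Two quantitative inputs are missing. First, with an arbitrary fixed $\eta\in(0,\epsilon)$ the path $Q$ may leave the annulus $A_{n_{m-i-1}}$ through its \emph{outer} boundary (this can happen whenever $\rho_{C_{m-i-1}}(s_{m-i-1},u_j)-\delta_{m-i-1}r$ is within $\eta$ of $3rn_{m-i-1}$); a reroute from such an outward exit point must cross the annulus again on its way to $s_{m-i-1}$ and may pass through $C_{m-i}$. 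The paper forecloses this by taking the slack adaptively, $\epsilon'=\min\{\epsilon,\;3rn_{m-i-1}+\delta_{m-i-1}r-\rho_{C_{m-i-1}}(s_{m-i-1},u_j)\}$, which forces every point of the path to stay strictly inside the outer radius, so the first exit is necessarily through the inner boundary. Second, even for an inward exit point $v$, a generic ``near-shortest'' reroute from $v$ to $s_{m-i-1}$ is not automatically disjoint from $C_{m-i}$: one needs its slack to be at most $3r(n_{m-i-1}-1)+\delta_{m-i-1}r-\rho_{C_{m-i-1}}(s_{m-i-1},v)$ (the paper's $\epsilon''$), so that every point of the reroute lies strictly below the inner radius of $A_{n_{m-i-1}}$ and hence outside $C_{m-i}$. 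Without these two choices the shortcut step is unsupported; moreover ``yielding a strictly shorter candidate path'' is false in general (a near-shortest reroute can be slightly longer than the excised excursion), so your iteration has no monotone quantity, no termination argument, and no control of accumulated slack. The paper needs no iteration at all: one splice at the first exit point, with $\epsilon''$ chosen as above, already guarantees that the tail never re-enters $C_{m-i}$, and then $P'_j\cap C_{m-i}$ is an initial subpath.

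A secondary flaw: your fallback choice of $t^{**}<t^*$ in the boundary case does not do what you claim. If $Q(t^*)\notin C_{m-i}$, then $t^*$ is approached from below by parameters $t$ with $Q(t)\in C_{m-i}$, so for any $t^{**}<t^*$ the segment $Q|_{[t^{**},1]}\setminus\{v_0\}$ still meets $C_{m-i}$, and the required property $p_2\setminus\{v_0\}\subset C_{m-i-1}\setminus C_{m-i}$ fails. Handling the half-open annulus boundary is exactly why the paper works edge by edge (choosing the first edge $e$ on which the path leaves $C_{m-i}$ and a point $v\in e\cap p$ with $v\notin C_{m-i}$) rather than with a supremum of parameters.
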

    
    \begin{proof}[{Proof of~\Cref{claim:minor_path}}]
    To construct such a path $P_{j'}$, we consider $n_{m-i-1}\in \mathbb N_0$ satisfying that $\rho_{C_{m-i-1}}(s_{m-i-1}, u_j) - \delta_{m-i-1}r \in [3(n_{m-i-1}-1)r, 3n_{m-i-1}r)$, and put $\epsilon' = \min \{ \epsilon, 3n_{m-i-1}r + \delta_{m-i-1}r - \rho_{C_{m-i-1}}(s_{m-i-1}, u_j) \}$. Subsequently, we let $p$ be an $\epsilon'$-path from $u_j$ to $s_{m-i-1}$. Note that by the choice of $\epsilon'$, we have
    \begin{equation}\label{eq:minor_v_distance}
        \rho_{C_{m-i-1}}(s_{m-i-1}, v) - \delta_{m-i-1}r < 3n_{m-i-1}r \text{,\quad where $v\in p$\text.}
    \end{equation}

    We claim that $s_{m-i-1}\notin C_{m-i}$. Indeed, note that by the assumption, we have $\rho(s_{m-i-1}, s_{m-i}) > 24rm$, and at the same time, for any $v \in C_{m-i}$ it holds that $|\rho_{C_{m-i-1}} (s_{m-i-1}, s_{m-i}) - \rho_{C_{m-i-1}} (s_{m-i-1}, v)| < 3r$ by the assumption that $C_{m-i}$ is a connected component in $A_{n_{m-i-1}} = \{v\in C_{m-i-1}\colon 3r(n_{m-i-1}-1)\leq \rho_{C_{m-i-1}}(v,s_{m-i-1})-\delta_{m-i-1} r < 3rn_{m-i-1}\}$.
    
    Along the path $p$, starting from $u_j$ and going to $s_{m-i-1}\notin C_{m-i}$, let $e$ be the first edge such that for some $v \in e \cap p$, we have $v\notin C_{m-i}$. In particular, it follows from \eqref{eq:minor_v_distance} that $\rho_{C_{m-i-1}}(s_{m-i-1}, v) - \delta_{m-i-1}r < 3(n_{m-i-1}-1)r$. By the assumption that $C_{m-i}$ is a pathwise connected component of $A_{n_{m-i-1}} = \{v\in C_{m-i-1}\colon 3r(n_{m-i-1}-1)\leq \rho_{C_{m-i-1}}(v,s_{m-i-1})-\delta_{m-i-1} r < 3rn_{m-i-1}\}$, we see that $e \cap C_{m-i}$ splits into at most two pathwise connected components. Hence, going from $u_j$ to $v$ along $p$, we see that this path further splits into two pathwise connected subsets, the first of which is contained in $C_{m-i}$ and the other is not.
    
    We denote $\epsilon'' = \min\{\epsilon, 3(n_{m-i-1}-1) + \delta_{m-i-1}r - \rho_{C_{m-i-1}}(s_{m-i-1}, v) \}$ and modify $p$ so that from $v$, it continues as some $\epsilon''$-path $p'$ in $C_{m-i-1}$ from $v$ to $s_{m-i-1}$. Without loss of generality, we can assume that the modified path does not intersect itself. Moreover, by the choice of $\epsilon''$, the path $p'$ does not intersect $C_{m-i}$. Hence, by the construction, the modified path $p$ has the property that $p\cap C_{m-i}$ is a subpath of $p$. We let $P'_j = p$.
    \end{proof}

    Having established the existence of a path $P'_j$ with the desired properties, we let the tail $T'_j \ni u_j$ be a subpath of length $24r$ of $P'_j$. Additionally, we let $t'_j \in T'_j$ denote the other endpoint of $T'_j$, other than $u_j$, and the middle point $h'_j \in T'_j$ be the point at distance $12r$ from $u_j$, along the tail $T'_j$. It remains to verify \cref{it:minor_4,it:minor_5,it:minor_6,it:minor_8}.

    Using the induction hypothesis that the middle points $h_j$, where $j\in\{m-i+1, \ldots, m\}$, and $s_{m-i}$ are more than $24r(m-i+1)$ far from each other in $C_{m-i}$ (see \cref{it:minor_5,it:minor_6}), we observe similarly as in the basis step that \cref{it:minor_5} holds for the new tails $T'_j$ and midpoints $h'_j$ (in particular, the new tails $T'_j$ are disjoint).
    
    Similarly, we obtain that for each $j\in\{m-i, \ldots, m\}$, the tail $T'_j$ is disjoint from $T_{j'}$ for any $j'\in\{m-i+1, \ldots, m\}$, where $j'\neq j$.
    
    More generally, to establish \cref{it:minor_4}, we wish to show that $\mathcal A'_{j}$ and $T'_{j'}$ are disjoint for each $j,\,j' \in \{m-i, \ldots, m\}$, where $j\neq j'$. To that end, assume for a contradiction that $\mathcal A'_{j}$ and $T'_{j'}$ meet at some point point $v$. As before, we denote $u_{j'}=h_{j'}$ (or $s_{m-i}$ for $j'=m-i$). We recall that $u_{j'},\,v\in C_{m-i}$ and, consequently, $\rho_{C_{m-i-1}} (s_{m-i-1}, u_{j'}) - \rho_{C_{m-i-1}} (s_{m-i-1}, v) < 3r$.
    
    Also, if $\ell$ denotes the length of the subpath of $T'_{j'}$ connecting $u_{j'}$ and $v$ (this subpath lies in $C_{m-i}$ by the construction of $P'_{j'}$, i.e., $\ell \geq \rho_{C_{m-i}} (u_{j'}, v) \geq |\rho_{C_{m-i}}(s_{m-i}, h_{j'}) - \rho_{C_{m-i}}(s_{m-i}, v)|$), we have that $\rho_{C_{m-i-1}} (s_{m-i-1}, u_{j'}) > \ell + \rho_{C_{m-i-1}} (s_{m-i-1}, v) - \epsilon$, that is, $\rho_{C_{m-i-1}} (s_{m-i-1}, u_{j'}) -  \rho_{C_{m-i-1}} (s_{m-i-1}, v) + \epsilon > \ell $. Altogether, we get that 
    \begin{equation}\label{eq:minor_proof_distance_s_v}
        3r+\epsilon > \ell \geq |\rho_{C_{m-i}}(s_{m-i}, h_{j'}) - \rho_{C_{m-i}}(s_{m-i}, v)| \text.
    \end{equation}
  
    We note that if $j<m-i$, then necessarily $v\in \mathcal A_j \subseteq C_{m-i+1}$, because $\mathcal A'_j = \mathcal A_j \cup T_j$ and $T'_{j'} \ni v$ and $T_j$ are disjoint by the already proven part. Otherwise if $j=m-i$, then $v\in (P_{l} \setminus T_{l}) \cup \{t_{l}\}$ for some $l \in \{m-i+1, \ldots, m\}$ by the construction. Consequently, in both cases ($j<m-i$ and $j=m-i$), \cref{it:minor_7} shows that $|\rho_{C_{m-i}}(s_{m-i}, h_{j'}) - \rho_{C_{m-i}}(s_{m-i}, v)| > 6r-\epsilon$, which is impossible by \eqref{eq:minor_proof_distance_s_v}. This contradiction finishes the proof that \cref{it:minor_4} holds.

    As for \cref{it:minor_8}, we claim that $\rho(s_j, h'_j) \leq 12ri$ for each $j\in\{m-i, \ldots, m\}$. Indeed, for $j> m-i$ this follows from inductive hypothesis and the fact that $\rho(h'_j, h_j)\leq 12r$. Moreover, we clearly have $\rho(s_{m-i}, h'_{m-i}) \leq 12r$.
    
    As it also holds that $\rho(s_{m-i-1}, s_{j}) > 24rm$ for any $j\in\{m-i, \ldots, m\}$ by the assumption, we deduce that $\rho(s_{m-i-1}, h'_{j}) > 24r(m-i)$ for any such $j$, which verifies \cref{it:minor_6}.

    Altogether, we have verified that also the paths $P_j$, tails $T_j$ and the tips $t_j$ and middle points $h_j$, where $j\in \{m-i,\ldots, m\}$, satisfy \cref{it:minor_4,it:minor_5,it:minor_6,it:minor_8}. This concludes the proof of the inductive step.

    Consequently, we can find supernodes $\mathcal A_i$, paths $P_i$, tails $T_i$, and tips $t_i$ and middle points $h_i$, where $i\in \{2, \ldots, m\}$, satisfying \cref{it:minor_1,it:minor_2,it:minor_3,it:minor_4,it:minor_4',it:minor_5,it:minor_6,it:minor_7,it:minor_8,it:minor_9}. Then, as in the first part of the proof of the induction step, we construct supernodes $\mathcal A_i$, where $i\in \{1, \ldots, m\}$, satisfying \cref{it:minor_1,it:minor_2,it:minor_3,it:minor_9}.

    We claim that $K_m$ is a minor of $G$. Indeed, if $m=3$, it is clear that we find a cycle in $G$. For $m>3$, we inductively define $\mathcal A'_i = \mathcal A_i \cap V \setminus \bigcup_{j=1}^{i-1} \mathcal A'_j$, where $i\in \{1, \ldots, m\}$. Note it follows from \cref{it:minor_9} that the vertex sets $\mathcal A'_i$ are non-empty connected subgraphs in $G$ and moreover, by the definition of $\mathcal A'_i$, any two vertex sets are connected by an edge in $E$ (consider the cases when two distinct supernodes meet at a vertex of $G$ or within the interior of some edge). The claim follows.
\end{proof}

\emergencystretch=0.5em
\printbibliography

\end{document}